\documentclass[a4paper,11pt,leqno]{article}
\usepackage{geometry}
\usepackage{layout}
\geometry{ 
  left=2.5cm,
  right=2.5cm,
  top=3.5cm,
  bottom=3cm
}

\usepackage[french,english]{babel}
\usepackage[applemac]{inputenc}

\usepackage{mathtools}
\usepackage{array}
\usepackage{cite}
\usepackage{hyperref}
\usepackage{amsmath,amssymb,amsthm}
\usepackage[cyr]{aeguill}
\usepackage{esint}
\usepackage[usenames,dvipsnames]{color}
\usepackage{xargs}
\usepackage{enumitem}
\setlist{nosep} % or \setlist{noitemsep} to leave space around whole list

%%%% Macros standards %%%
% Entiers, réels, etc.
\newcommand{\N}{\mathbb{N}}

\newcommand{\R}{\mathbb{R}}
\newcommand{\C}{\mathbb{C}}
%\newcommand{\S}{\mathbb{S}}
%\newcommand{\T}{\mathbb{T}}

% Environnement de théorèmes
\newtheorem{theo}{Theorem}
\newtheorem{prop}{Proposition}[section]
\newtheorem{lem}[prop]{Lemma}
\newtheorem{coro}[prop]{Corollary}

\newtheorem{defi}[prop]{Definition}

\newtheorem{assumption}{Assumption}

\theoremstyle{plain}

\numberwithin{equation}{section}

% Flèches, convergence
 % Flèche longue
\newcommand{\then}{\Longrightarrow} % Implication longue
% Rappel : \iff pour équivalence courte
 % Équivalence longue
\def\t0{\rightarrow 0} % Vers zéro
\def\ti{\rightarrow \infty} % Vers l'infini

% Moyenne 

\def\XXint#1#2#3{{\setbox0=\hbox{$#1{#2#3}{\int}$}
     \vcenter{\hbox{$#2#3$}}\kern-.5\wd0}}

% 

% Diverses commandes utiles
\def \rm{\mathrm}

\def \hal{\frac{1}{2}}

 % Support
 % Cardinal
\def \div{\mathrm{div} \, } % Divergence
\def \1{\mathbf{1}} % Caracteristic function

\def \dist{\mathrm{dist}}

\def \nab{\nabla}

%%  Divers %%
 % Espace physique
 % Lattice  
 % Additional axis

 % Scaled support
\def \mueq{\mu_{\mathrm{eq}}} % Eq. measure
\def \mupeq{\mu'_{\mathrm{eq}}} % Scaled eq. measure
\def \meq{m_{\mathrm{eq}}} % Density 
\def \mpeq{m'_{\mathrm{eq}}} % Density 
\def \muN{\mu_{N}}

\def \carr{C} % Carré
 % Carré \times R
\def \Nn{\mathcal{N}}
\def \D{\mathcal{D}}
\def \um{\underline{m}} % Bornes sur la densitÃ©
\def \om{\overline{m}} % Bornes sur la densitÃ©
\def \mc{\mathcal}

%%% Hamiltonian and renormalized energy %%%

\def \hN{h_N} % N points first-order Hamiltonian
\def \wN{w_N} % N points second-order Hamiltonian

\def \bW{\mathbb{W}}

\def \W{\mathbb{W}}
\def \Wc{\mathbb{W}_{m}}

%%% Espaces fonctionnels %%%
\def \config{\mathcal{X}} % Espace des configurations de points
 % Configurations admissibles
 % Espace 1-Lipschitz
% Fonctions locales
\def \Lploc{L^p_{\mathrm{loc}}}

%%% Probabilities, processes, entropy %%%
\def \probas{\mathcal{P}}
\def \Pelec{P^{\mathrm{elec}}} % Electric process

\def \P{\mathbb{P}} %  Probability measures
 % (Random) Point processes

\def \PNbeta{\P^{\beta}_{N}} % Gibbs  measure
\def \ZNbeta{Z_{N, \beta}} % Partition function
\def \KNbeta{K_{N, \beta}} % Partition function after re-writing

\def \fPNbeta{\Pgot^{\beta}_{N}} % Averaged Gibbs point process 
\def \fPNbeta{\mathfrak{P}_{N,\beta}}

 % Local point process
\def \PNbetaxb{\mathfrak{P}^{z_0}_{N,\beta,\deltap}} % Local point process

\def \iN{i_N}
\def \iNxa{i^{z_0}_{N,\delta}}
\def \iNxb{i^{z_0}_{N,\deltap}}

 % Reference measure (Lebesgue + zeta)
 % Reference point process
 % Reference measure compactly supported
 % Normalizing constant for Q

 % Sine_{\beta}
\def \Leb{\mathbf{Leb}}
\def \Poisson{\mathbf{\Pi}}
\def\Esp{\mathbf{E}} % EspÃ©rance
\def \Ent{\mathrm{Ent}}   % Entropie relative classique
\def \ERS{\mathsf{ent}} % Entropie relative spÃ©cifique (sans marques)

\def \B{\textbf{B}} % Bernoulli point process B(endroit, nombre de points)
 % Average of B.p.p

%%% Divers
\def \fbeta{\mathcal{F}_{\beta}} % Rate function 
\def \fbetax{\mathcal{F}_{\beta}^{\meq(z_0)}}

\def \cds{2\pi} %% Constante c,d,s.

 % Configuration sous-jacente
\def \Eloc{E^{\mathrm{loc}}}
\def \Hloc{H^{\mathrm{loc}}}
\def \C{\mathcal{C}}

%%% Energies and compatible fields %%%
\def \Elec{\mathsf{Elec}}
\def \Eleco{\Elec^{0}}

\def \XN{\vec{X}_N} 
\def \XpN{\vec{X}^{'}_N}

 % Hypercube centered at z. Used as \Cx_{\alpha} for scale \alpha
\def \Cxa{C^{z_0}_{\delta}}

\def \Nxa{\mathcal{N}^{z_0}_{\deltap}}

\def \M{M^{\eta}} % Used as \M(E, \Cxa)
 % Good control on \M
 % Used as \ee(E, \Cxa)
 % Good control on ee

%%% Microstates inside

\def \AintN{A_N^{\rm{int}}}
\def \Cint{\C^{\rm{int}}}
\def \Eint{E^{\rm{int}}}

%%% Configurations outside
\def \Etran{E^{\rm{ext}}}
\def \Ctran{\C^{\rm{ext}}}

%%% Screening quantities

\def \Ctran{\C^{\rm{tran}}}
\def \Etran{E^{\rm{tran}}}

%%% Global configurations

\def \Atot{A^{\rm{tot}}}

\def \Ctot{\C^{\rm{tot}}}
\def \Etot{E^{\rm{tot}}}

%%%% With probability

\def \deltap{\delta_1}
\def \deltapp{\delta_2}
\def \deltatp{\delta_3}

%%%% Screening
\def \h {h}
\def \Chol{C_{\mu}}
\def \AtranN{A^{\rm{tran}}_{N}}
\def \Next{N^{\rm{tran}}}
\def \Nint{N^{\rm{gen}}}
\def \Nmid{N^{\rm{mid}}}

%%% Monoton
\def \Ein{E^{\rm{in}}}
\def \Hin{H^{\rm{in}}}
\def \Eou{E^{\rm{out}}}
\def \Nin{N^{\rm{in}}}
\def \Nbou{N^{\rm{bou}}}

%%% Upper bound 

\def \Xin{\vec{X}^{\rm{in}}}
\def \Elecin{\mathsf{Elec}^{\rm{in}}}
\def \Xou{\vec{X}^{\rm{out}}}
\def \Elecou{\mathsf{Elec}^{\rm{out}}}
\def \Fou{F^{\rm{out}}}
\def \Fin{F^{\rm{in}}}
\def \Nou{N^{\rm{out}}}
\def \Nin{N^{\rm{in}}}
\def \Ntran{N^{\rm{tran}}}

%%% Lower bound
\def \cA{\mathcal{A}}
\def \cAtot{\cA^{\rm{tot}}}

%%% Intro
\def \Zeta{\tilde{\zeta}}
\def \fbarbeta{\bar{\mathcal{F}}_{\beta}}
\def \Nnz{\Nn^{z_0}}

\def \M{\mathit{M}}
\def \Escr{E^{\rm{scr}}}
\def \Emod{E^{\rm{mod}}}

\def \KNbetax{K^{\beta}_{N,z, \deltap}}

\def \Cscr{\mathcal{C}^{\rm{scr}}}
\def \Cmod{\mathcal{C}^{\rm{mod}}}

\def \epsilon{\varepsilon}
\begin{document}
\title{Local microscopic behavior for $2$D Coulomb gases}
\author{Thomas Leblé\footnote{Sorbonne Universités, UPMC Univ. Paris 06 and CNRS, UMR 7598, Laboratoire Jacques-Louis Lions, F-75005, Paris. E-mail \texttt{leble@ann.jussieu.fr}} }
\maketitle
\begin{abstract}
The study of two-dimensional Coulomb gases lies at the interface of statistical physics and non-Hermitian random matrix theory. In this paper we give a large deviation principle (LDP) for the empirical fields obtained, under the canonical Gibbs measure, by zooming around a point in the bulk of the equilibrium measure, up to the finest averaging scale $N^{-1/2 + \epsilon}$. The rate function is given by the sum of the “renormalized energy” of Serfaty \textit{et al.} weighted by the inverse temperature, and of the specific relative entropy. We deduce a local law which quantifies the convergence of the empirical measures of the particles to the equilibrium measure, up to the finest scale. 
\end{abstract}
\section{Introduction}
\subsection{General setting}
We consider a system of $N$ points in the Euclidean space $\R^2$ with pairwise logarithmic interaction, in a confining potential $V$, and associate to any $N$-tuple $\XN = (x_1, \dots, x_N)$ the energy
\begin{equation} \label{HN}
\hN(\XN) := \sum_{1 \leq i \neq j \leq N} -\log|x_i-x_j| + N \sum_{i=1}^N V(x_i), \quad x_i \in \R^2.
\end{equation}
We only impose mild conditions on the potential $V$ (see Assumption \ref{assumption:V}). 

For any value of the \textit{inverse temperature} parameter $\beta > 0$ we consider the associated $N$-point Gibbs measure, which is absolutely continuous with respect to the Lebesgue measure on $(\R^2)^N$ with a density given by
\begin{equation}
\label{def:Gibbs} d\PNbeta(\XN) := \frac{1}{\ZNbeta} e^{-\hal \beta \hN(\XN)} d\XN,
\end{equation}
where we denote a $N$-tuple of points by $\XN = (x_1, \dots, x_N)$ and $d\XN := dx_1 \dots dx_N$.
The constant $\ZNbeta$ is a normalizing constant, also called the \textit{partition function}, so that the total mass of $\PNbeta$ is $1$.

\paragraph{Motivations.}
The model described by \eqref{HN} and \eqref{def:Gibbs} is known in statistical physics as a \textit{two-dimensional Coulomb gas}, \textit{two-dimensional log-gas} or \textit{two-dimensional one-component plasma}, we refer e.g. to \cite{AJ}, \cite{JLM}, \cite{SM} for a physical treatment of its main properties.

When $\beta = 2$ and $V$ is quadratic, the probability measure \eqref{def:Gibbs} coincides with the joint law of eigenvalues of a non-Hermitian matrix model known as the \textit{complex Ginibre ensemble}, which is obtained by sampling a $N \times N$ matrix whose coefficients are (properly normalized) i.i.d. complex Gaussians, see \cite{ginibre}. For $\beta = 2$, more general potentials can be considered, which are associated to “random normal matrices” (see e.g. \cite{ahm2}).  Systems of particles with a logarithmic interaction as in \eqref{HN}, called \textit{log-gases}, have been also (and mostly) been studied on the real line, motivated by their link with Hermitian random matrix theory. We refer to \cite{forrester} for a survey of the connection between log-gases and random matrix theory, and in particular to \cite[Chap.15]{forrester} for the two-dimensional (non-Hermitian) case.

The Ginibre case (and the case $\beta = 2$ in general) has the special property that the point process associated to $\PNbeta$ becomes \textit{determinantal}, which allows for an exact computation of many interesting quantities, e.g. the $n$-point correlation functions. The existence of a matrix model also allows for universality results at the microscopic scale as in \cite{Bourgade2d1, Bourgade2d2}. In the present paper we rather work with general $\beta > 0$ and potential $V$, thus dealing with what could be called \textit{two-dimensional $\beta$-ensembles} by analogy with the one-dimensional $\beta$-ensembles which generalize the laws of eigenvalues of random Hermitian matrices (see e.g. \cite{de}). The microscopic behavior of one-dimensional $\beta$-ensembles has been recently investigated in \cite{Bourgade1d1, Bourgade1d2} and we aim at a similar understanding in the two-dimensional case.

\paragraph{First-order results: the macroscopic behavior.}
Let us first recall some results about the macroscopic behavior of the particle system as $N \ti$.

If the potential $V$ has some regularity and grows fast enough at infinity (see Assumption \ref{assumption:V}) there is an associated \textit{equilibrium measure} $\mueq$, such that the sequence $\{\mu_N\}_N$ (where $\muN := \frac{1}{N} \sum_{i=1}^N \delta_{x_i}$ denotes the \textit{empirical measure} of the points) converges almost surely to $\mueq$. Moreover the law of $\{\mu_N\}_N$ satisfies a Large Deviation Principle (LDP) at speed $\frac{\beta}{2} N^2$ on the space $\probas(\R^2)$ of probability measures, with good rate function given by 
\begin{equation} \label{def:I}
I(\mu) := \iint -\log|x-y| d\mu(x) d\mu(y) + \int V(x) d\mu(x).
\end{equation}
This characterizes the \textit{first-order} or \textit{macroscopic} behavior of the interacting particle system. Typically, as $N$ becomes large, the $N$ points $x_1, \dots, x_N$ arrange themselves according to the probability density $d\mueq$, which has compact support $\Sigma$. Events that deviate from this prediction occur only with $\PNbeta$-probability of order $\exp(-N^2)$. We refer to \cite[Chap.2]{serfatyZur} and the references therein for a detailed exposition.

\paragraph{Microscopic behavior with macroscopic average.}
In this section we summarize the main result of \cite{LebSer}, which describes the behavior as $N \ti$ of a \textit{microscopic} quantity obtained through a \textit{macroscopic average}.

Let $\config$ be the set of locally finite point configurations in $\R^2$, endowed with the topology of vague convergence, and let us denote by $\probas(\config)$ the set of Borel probability measures on $\config$ i.e. the set of random point processes on $\R^2$ (we refer to Section \ref{sec:configpp} for more details).

In \cite{LebSer} (following the line of work \cite{gl13}, \cite{SS2d}, \cite{SS1d}, \cite{RougSer}, \cite{PetSer}) S. Serfaty and the author have investigated the microscopic behavior of the system by making a statement on the point processes arising when zooming in by a factor $N^{1/2}$ (which is the typical inter-particle distance) and averaging over translations in a way that we now briefly present.

For any $N$-tuple $\XN$, let
$x'_i = N^{1/2} x_i$, $\nu'_N := \sum_{i=1}^N \delta_{x'_i}$, let $\Sigma' := N^{1/2} \Sigma$ denote the support of $\mueq$ after rescaling, and let $\iN$ be the map $\iN : (\R^2)^N  \rightarrow  \probas(\config)$ defined by
\begin{equation} \label{def:iN}
\iN(\XN)  : =  \frac{1}{|\Sigma'|} \int_{\Sigma'} \delta_{\theta_{z'} \cdot \nu'_N}\, dz',
\end{equation}
where $\theta_{z'} \cdot$ denotes the action of translation by $z' \in \Sigma'$, and where $\delta$ is the Dirac mass. The map $\iN$ transforms a $N$-tuple of points into the data of all the blown-up point configurations obtained by zooming in by a factor $N^{1/2}$ around any $z \in \Sigma$. Such quantities are called \textit{empirical fields}.

We let $\fPNbeta$ be the push-forward of $\PNbeta$ by $\iN$. The main result of \cite{LebSer} gives a large deviation principle for $\{\fPNbeta\}_N$ at speed $N$, on the space of stationary random point processes. The rate function on this subset of $\probas(\config)$ is given by
\[
\fbeta(P) := \frac{\beta}{2} \Esp_{P} [\bW] + \ERS[P|\Poisson^1],
\]
where $\bW$ is an energy functional which will be defined later, $\Esp_{P}$ denotes the expectation under $P$, and $\ERS[P|\Poisson^1]$ is the \textit{specific relative entropy} of $P$ with respect to the Poisson point process of intensity $1$ in $\R^2$ (see Section \ref{sec:entropy}).

This LDP characterizes the microscopic behavior only in an \textit{averaged} way, because of the average over translations in the definition of $\iN$. In fact (this is still a consequence of \cite[Theorem 1]{LebSer}) this description can be enhanced by replacing the average over translations in $\Sigma'$ by an average over translations in arbitrary small macroscopic regions (seen in blown-up scale), for example the square $C(z'_0, \epsilon N^{1/2})$, where $\epsilon > 0$ is fixed and $z'_0 = N^{1/2}z_0$ for some $z_0$ in the interior of $\Sigma$ (the bulk). Let us emphasize that the average still takes place at the macroscopic scale $N^{1/2}$. This is done in \cite{LebSer} by considering “tagged” empirical fields  which are elements of $\probas(\Sigma \times \config)$ keeping track of the point around which the configuration has been zoomed, thus allowing for a macroscopic localization.

\paragraph{Microscopic behavior with mesoscopic averages.}
The goal of this paper is to push further the analysis of \cite{LebSer} at finer scales and to consider \textit{mesoscopic} versions of the map $\iN$. In other terms we look at the empirical fields obtained by averaging over translations in $C(z_0', N^{\delta})$ for $0 < \delta < 1/2$, and we obtain a LDP at speed $N^{2\delta}$ with essentially the same rate function as above. It is crucial to average over a relatively large set and although one might hope to go down to even finer scales (e.g. $O(\log^k N)$ for $k$ large enough) we do not expect a similar result to hold for a strictly speaking \textit{microscopic} average at scale $O(1)$ (in blown-up coordinates). 

The first-order results show that the empirical measure $\mu_N := \frac{1}{N} \sum_{i=1}^N \delta_{x_i}$ converges to the equilibrium measure $\mueq$ almost surely. As a consequence of our analysis we get a “local law” (borrowing the terminology of \cite{Bourgade2d1} and \cite[Theorem 20]{TaoVu}) which implies that $\mu_N$ and $\mueq$ are close at small scales with very high probability.

\subsection{Preliminary notation and definitions}
\subsubsection{Notations}
For $R > 0$ we denote by $C_R$ the square $[-R/2, R/2]^2$ and by $C(z,R)$ the translate of $C_R$ by $z \in \R^2$. We denote by $D(p,r)$ the disk of center $p$ and radius $r > 0$. If $N$ is fixed and $\XN \in (\R^2)^N$ we denote by $\nu_N := \sum_{i=1}^N \delta_{x_i}$ and $\nu'_N := \sum_{i=1}^N \delta_{x'_i}$ (where $x'_i = N^{1/2} x_i$).

Let $0 < \delta < 1/2$. We say that an event $\mathcal{A}$ occurs with $\delta$-overwhelming probability if 
\[
\limsup_{N \ti} N^{-2\delta} \log \PNbeta(\mathcal{A}^c) = -\infty,
\]
where $\mathcal{A}^c$ is the complement of $\mathcal{A}$. In particular, for any event $\mc{B}$, if $\mathcal{A}$ occurs with $\delta$-overhelming probability we have
\[
\limsup_{N \ti} N^{-2\delta} \log \PNbeta(\mc{B}) = \limsup_{N \ti}  N^{-2\delta} \log \PNbeta(\mc{B} \cap \mc{A}),
\]
and the same goes for the $\liminf$. In other terms, when evaluating probabilities of (logarithmic) order $N^{2\delta}$ we may restrict ourselves to the intersection with any event of $\delta$-overhelming probability.

If $\{a_N\}_N, \{b_N\}_N$ are two sequences of non-negative real numbers, we will write $a_N \preceq b_N$ if  if there exists $C > 0$ such that $a_N \leq C b_N$ ($\PNbeta$-a.s. if the numbers are random), and we will write $a_N \preceq_{\delta} b_N$ if there exists $C > 0$ such that $a_N \leq C b_N$ with $\delta$-overhelming probability.

We will write
$a_N \ll N^{\delta}$ if there exists $\tau > 0$ such that $a_N \leq N^{\delta-\tau}$ ($\PNbeta$-a.s. if the numbers are random) and $a_N \ll_{\delta'} N^{\delta}$ if there exists $\tau > 0$ such that $a_N \leq N^{\delta-\tau}$ with $\delta'$-overwhelming probability.

\subsubsection{Equilibrium measure and splitting of the energy}
Under mild hypotheses on $V$ (see Assumption \ref{assumption:V}) it is known (see e.g. \cite[Chap.1]{safftotik}) that there exists a probability measure $\mueq$ with compact support $\Sigma$ which is the unique minimizer of $I$ (as in \eqref{def:I}) over $\probas(\R^2)$ (the set of probability measures). Defining $\zeta$ as
\begin{equation}
\zeta(x) := \int_{\R^2} - \log |x-y| d\mueq(x) + \frac{V}{2} - \left(\iint_{\R^2} - \log |x-y| d\mueq(x) d\mueq(y) - \frac{1}{2} \int_{\R^2} V d\mueq\right),
\end{equation}
we have $\zeta \geq 0$ quasi-everywhere (q.e.) in $\R^2$ and $\zeta = 0$ q.e. on $\Sigma$, and in fact this characterizes $\mueq$ uniquely, see \cite{frostman}. If $N \geq 1$ is fixed we let $\mupeq(x) := \mueq(xN^{-1/2})$.

If $\C$ is a finite point configuration we define the second-order energy functional $\wN(\C)$ as
\begin{equation}
\label{def:wN}
\wN(\C) := \iint_{\triangle^c} - \log |x-y| (d\C - d\mupeq)(x) (d\C - d\mupeq)(y),
\end{equation}
where $\triangle^c$ denotes the complement of the diagonal $\triangle$. It computes the electrostatic interaction of the electric system made of the point charges in $\C$ and a negatively charged background of density $\mupeq$, without the infinite self-interactions of the point charges.

Let $\Zeta(\C) : = \int \zeta d\C$. It was proven in \cite{SS2d} (see also \cite[Chap.3]{serfatyZur}) that the following \textit{exact splitting formula} holds:
\begin{lem} \label{lem:splitting} For any $N \geq 1$ and any $\XN \in (\R^2)^N$ we have, with $I$ as in \eqref{def:I}
\begin{equation} \label{splitting}
 \hN(\XN) = N^2 I(\mueq) -\frac{N \log N}{2} + \wN(\nu'_N) + 2N \Zeta(\nu_N).
\end{equation}
\end{lem}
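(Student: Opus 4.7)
The strategy is the classical ``splitting'' or ``completion of squares'' argument: introduce the signed measure $\eta := \nu_N - N\mueq$, which has total mass zero, expand $\hN$ into a leading macroscopic term plus a quadratic form in $\eta$, and then rescale by $N^{1/2}$ to recognize $\wN(\nupN)$, paying attention to the price of removing the diagonal.

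First, I would rewrite $\hN(\XN) = \iint_{\triangle^c} -\log|x-y|\, d\nuN(x)\, d\nuN(y) + N \int V\, d\nuN$, and then substitute $\nuN = N\mueq + \eta$ in both factors. Because $\mueq$ is absolutely continuous (no atoms), integrals of the form $\iint_{\triangle^c} \cdots\, d\mueq\, d\mueq$ and $\iint_{\triangle^c} \cdots\, d\mueq\, d\eta$ coincide with their counterparts on all of $\R^2 \times \R^2$. The expansion yields three kinds of terms: a pure $N^2 \mueq\otimes\mueq$ piece, which together with $N^2 \int V\, d\mueq$ produces exactly $N^2 I(\mueq)$; two linear-in-$\eta$ cross terms plus the potential term $N\int V\, d\eta$, which regroup into $2N \int (U^{\mueq} + V/2)\, d\eta$; and the purely quadratic remainder $\iint_{\triangle^c} -\log|x-y|\, d\eta(x)\, d\eta(y)$.

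Next, using the definition of $\zeta$, write $U^{\mueq} + V/2 = \zeta + c$ for the appropriate constant $c$. Since $\eta$ has total mass $0$, the constant disappears, so the linear term equals $2N \int \zeta\, d\eta$. Now $\zeta$ vanishes quasi-everywhere on $\Sigma$ and $\mueq$ charges no polar set, so $\int \zeta\, d\mueq = 0$; hence $\int \zeta\, d\eta = \int \zeta\, d\nuN = \Zeta(\nuN)$, giving the $2N\Zeta(\nuN)$ term.

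It remains to relate the quadratic piece at the original scale to $\wN(\nupN)$. The push-forward $x \mapsto N^{1/2} x$ sends $\eta$ to $\eta' := \nupN - \mupeq$, and $-\log|N^{1/2}(u-v)| = -\log|u-v| - \tfrac{1}{2}\log N$. Thus
\begin{equation*}
\iint_{\triangle^c} -\log|x-y|\, d\eta'(x)\, d\eta'(y) \;=\; \iint_{\triangle^c} -\log|u-v|\, d\eta(u)\, d\eta(v) \;-\; \tfrac{\log N}{2}\, \iint_{\triangle^c} d\eta\, d\eta.
\end{equation*}
The mass $\iint_{\triangle^c} d\eta\, d\eta$ is computed directly by expanding $\eta = \nuN - N\mueq$: the diagonal of $\nuN\otimes\nuN$ contributes $N$ (giving $N^2 - N$ off-diagonal), while all other bilinear terms are full integrals because $\mueq$ has no atoms, and one finds $\iint_{\triangle^c} d\eta\, d\eta = -N$. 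This produces the correction $-\tfrac{N\log N}{2}$, and combining everything gives \eqref{splitting}. The only subtle point, and really the only thing worth checking carefully, is this bookkeeping around the diagonal and the fact that $\mueq$ gives it no mass; everything else is linear algebra of measures.
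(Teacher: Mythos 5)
Your argument is correct and is precisely the standard ``splitting by completion of the square'' computation from Sandier--Serfaty that the paper itself does not reproduce but simply cites (from \cite{SS2d} and \cite[Chap.3]{serfatyZur}): expand around $N\mueq$, use the Frostman relation and the zero total mass of $\nu_N-N\mueq$ to produce $N^2I(\mueq)+2N\Zeta(\nu_N)$, and rescale the quadratic remainder, with the diagonal bookkeeping $\iint_{\triangle^c}d\eta\,d\eta=-N$ yielding the $-\tfrac{N\log N}{2}$ term. No gaps; the one point you rightly flag as needing care (that $\mueq$, being absolutely continuous, gives no mass to the diagonal and to polar sets) is exactly the point that makes the computation go through.
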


We may thus re-write the Gibbs measure $\PNbeta$ as 
\begin{equation} \label{def:Gibbs2}
d\PNbeta(\XN) = \frac{1}{\KNbeta} e^{-\frac{1}{2} \beta(\wN(\nu'_N) + 2N \Zeta(\nu_N))} d\XN,
\end{equation}
where $\KNbeta$ is a new normalizing constant. The exponent $(\wN(\nu'_N) + 2N \Zeta(\nu_N))$ is expected to be typically of order $N$, and it was proven in \cite[Cor. 1.5]{LebSer} that $\log \KNbeta = - N \min \fbarbeta + o(N)$, where $\fbarbeta$ is closely related to the function $\fbeta$ mentioned above.

\subsubsection{Energy and entropy}
\paragraph{Renormalized energy.}
In \cite{LebSer}, following \cite{gl13, SS2d, RougSer, PetSer}, an energy functional is defined at the level of random stationary point processes (see also \cite[Chap.3-6]{serfatyZur}), which is the $\Gamma$-limit of $\frac{1}{N} \wN$ as $N \ti$. We will define it precisely in Section \ref{sec:energie} and we denote it by $\Wc$ (where $m \geq 0$ is a parameter - the notation differs slightly from that of \cite{LebSer} where it corresponds to $\widetilde{\mathbb{W}}_m$). It can be thought of as the infinite-volume limit of \eqref{def:wN} and as a way of computing the interaction energy of an infinite configuration of point charges $\C$ together with a negatively charged background of constant density $m$.

\paragraph{Specific relative entropy.}
For any $m \geq 0$ we let $\Poisson^m$ be the law of a Poisson point process of intensity $m$ in $\R^2$. Let $P$ be a stationary random point process on $\R^2$. The relative specific entropy $\ERS[P|\Poisson^m]$ of $P$ with respect to $\Poisson^m$ is defined by
\begin{equation} \label{def:ERS}
\ERS[P|\Poisson^m] := \lim_{R \ti} R^{-2} \Ent\left(P_{|\carr_R} | \Poisson^m_{|\carr_R} \right),
\end{equation}
where $P_{|\carr_R}$ denotes the random point process induced in $\carr_R$, and $\Ent( \cdot | \cdot)$ denotes the usual relative entropy (or Kullbak-Leibler divergence) of two probability measures defined on the same probability space. We take the appropriate sign convention for the entropy so that it is non-negative: if $\mu,\nu$ are two probability measures defined on the same space  we let $\Ent \left(\mu | \nu \right) := \int \log \frac{d\mu}{d\nu} d\mu$ if $\mu$ is absolutely continuous with respect to $\nu$ and $+ \infty$ otherwise. For more details we refer to Section \ref{sec:entropy}.

\subsubsection{Good control on the energy} \label{sec:goodcont}
In this paragraph we define the notion of “good control at scale $\delta$”, which expresses the fact that our particle system has good properties in any square of sidelength $N^{\delta}$ (after blow-up). The assumption that good control at scale $\delta$ holds will be a key point in order to prove the LDP at  slightly smaller scales. Moreover we will see that the “good control” assumption can be bootstrapped, i.e. good control at scale $\delta$ implies good control at scale $\deltap$ for $\deltap < \delta$ large enough.

Let us first introduce the local electric field $\Eloc$ and its truncation $\Eloc_{\eta}$, we will come back to these definitions in more detail in Section \ref{sec:Eloc}. If $\XN$ is a $N$-tuple of points in $\R^2$, for any $0 < \eta < 1$ we denote by $\nu'_{N,\eta}$ the measure $\nu'_{N, \eta} := \sum_{i=1}^N \delta^{(\eta)}_{x'_i}$,
where $\delta^{(\eta)}_{x'_i}$ denotes the uniform probability measure on the circle of center $x'_i$ and radius $\eta$.
We let $\Eloc$ be the associated “local electric field” $\Eloc(x) := (- \nabla \log) * (\nu'_N - \mupeq)$ and $\Eloc_{\eta}$ its truncation at scale $\eta$, defined by $
\Eloc_{\eta}(x) := (- \nabla \log) * (\nu'_{N,\eta} - \mupeq)$. Finally, we denote by $\mathring{\Sigma}$ the interior of $\Sigma$. 

\begin{defi}  \label{def:goodcontrol}
For any $0 < \delta \leq \frac{1}{2}$ we say that a good control at scale $\delta$ holds if for any $z_0 \in \mathring{\Sigma}$ and any $0 < \deltap < \delta$ we have with $\deltap$-overwhelming probability:
\begin{enumerate}
\item The number of (blown-up) points $\mathcal{N}^{z_0}_{\delta}$ in the square $C(z'_0,N^{\delta})$ is of order $N^{2\delta}$
\begin{equation} \label{goodcont2}
\mathcal{N}^{z_0}_{\delta} \preceq_{\deltap} N^{2\delta}.
\end{equation}
\item For any $0 < \eta < 1$ we have
\begin{equation}\label{goodcont1}
\int_{\Cxa} |\Eloc_{\eta}|^2 + \mathcal{N}^{z_0}_{\delta} \log \eta  \preceq_{\deltap} N^{2\delta},
\end{equation}
which expresses the fact that the energy in the square $C(z'_0,N^{\delta})$ (after blow-up) is of order $N^{2\delta}$.
\end{enumerate}
\end{defi}
Let us emphasize that \eqref{goodcont2}, \eqref{goodcont1} control quantities at scale $\delta$ by looking at probabilities at scale $\deltap < \delta$.

\subsection{Rate function}
Let us define the \textit{local} rate function as
\begin{equation}
\label{def:fbetax}
\fbeta^m(P) := \frac{\beta}{2} \bW_{m}(P) + \ERS[P | \Poisson^{m}].
\end{equation}
It is a \textit{good rate function} because both terms are good rate functions (see e.g. \cite[Lemma 4.1]{LebSer}).

For any $m > 0$ we let $\probas_{s,m}(\config)$ be the set of random stationary point processes of intensity $m$. Let us define a scaling map $\sigma_m : \probas_{s}(\config) \to \probas_{s}(\config)$ such that $\sigma_m(P)$ is the push-forward of $P$ by $\C \mapsto m^{-1/2} \C$. It is easy to see that $\sigma_m$ induces a bijection from $\probas_{s,m}(\config)$ to $\probas_{s,1}(\config)$ for any $m > 0$. 

It is proven (see \cite[Def. 2.4, Lemma 4.2]{LebSer}) that
\begin{lem} \label{lem:scaling} The map $\sigma_m$ induces a bijection between the minimizers of $\fbeta^m$ over $\probas_{s,m}(\config)$ and the minimizers of $\fbeta^1$ over $\probas_{s,1}(\config)$.
\end{lem}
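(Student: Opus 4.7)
The plan is to reduce the claim to two scaling identities, one for each term in $\fbeta^m$, and then invoke the general fact that a monotone affine transformation of a functional preserves its argmin. The map $\sigma_m$ is manifestly invertible (its inverse is the pushforward under the reciprocal rescaling) and by construction sends $\probas_{s,m}(\config)$ onto $\probas_{s,1}(\config)$; so once one has an identity of the form
\[
\fbeta^m(P) \;=\; c(m)\,\fbeta^1(\sigma_m(P)) + \gamma(m),
\]
with $c(m) > 0$ and $\gamma(m) \in \R$ depending only on $m$, the claim follows immediately.

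For the entropy term I would argue directly from the definition \eqref{def:ERS}. The spatial rescaling underlying $\sigma_m$ puts $P_{|C_R}$ in bijection with $\sigma_m(P)_{|C_{R'}}$, where $R'$ differs from $R$ by a power of $m$, and it transports $\Poisson^m$ onto $\Poisson^1$ by the elementary scaling rule for Poisson processes. Since ordinary relative entropy is invariant under measurable bijections of the underlying probability spaces,
\[
\Ent\!\left(P_{|C_R}\,\big|\,\Poisson^m_{|C_R}\right) \;=\; \Ent\!\left(\sigma_m(P)_{|C_{R'}}\,\big|\,\Poisson^1_{|C_{R'}}\right).
\]
Dividing by $R^2$, comparing with the $(R')^{-2}$ normalization of the definition of $\ERS$, and letting $R \ti$ produces a multiplicative factor of $m$ (the Jacobian-type correction in the area normalization), giving $\ERS[P|\Poisson^m] = m \cdot \ERS[\sigma_m(P)|\Poisson^1]$.

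For the renormalized energy one performs a parallel change of variables at the level of the truncated electric fields used to define $\bW_m$ (as will be recalled in Section \ref{sec:energie}). Because the two-dimensional Coulomb kernel is logarithmic, the rescaling produces both a definite power of $m$ in front of the integrand and an additive constant proportional to $\log m$, arising from the identity $-\log|\lambda(x-y)| = -\log|x-y| - \log|\lambda|$. The outcome is a relation $\bW_m(P) = m \cdot \bW_1(\sigma_m(P)) + \kappa(m)$ for an explicit $\kappa(m) \in \R$; this is precisely the content of \cite[Lemma 4.2]{LebSer}. Combined with the entropy identity, this yields $\fbeta^m(P) = m \cdot \fbeta^1(\sigma_m(P)) + \frac{\beta}{2}\kappa(m)$, with $m > 0$, and the lemma follows.

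The only delicate point is the scaling of $\bW_m$: since the renormalized energy is defined through a truncation at scale $\eta$ together with subtraction of a $\log \eta$ self-interaction correction, the dependence on $m$ is not pure homogeneity and the additive constant $\kappa(m)$ has to be tracked carefully through the definition. Once the precise form of $\bW_m$ is in hand, however, this amounts to a routine bookkeeping exercise on the logarithmic kernel rather than a genuinely hard step.
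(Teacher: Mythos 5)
Your proposal is correct and follows essentially the same route as the paper, which simply invokes \cite[Def. 2.4, Lemma 4.2]{LebSer} for exactly the scaling identities you derive: the energy and the specific relative entropy each pick up the same multiplicative factor $m$ under $\sigma_m$ (plus an additive constant $\propto m\log m$ for the energy coming from the logarithmic kernel), so $\fbeta^m = m\,\fbeta^1\circ\sigma_m + \mathrm{const}$ and minimizers correspond. Your entropy computation (invariance of relative entropy under the bijection, with the factor $m$ arising from the $R^{-2}$ versus $(R')^{-2}$ normalization) is the standard argument and is sound.
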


We may now state our main results. 
\subsection{Statement of the results}
If $z_0 \in \mathring{\Sigma}$ and $0 < \deltap < 1/2$ are fixed, let us define the map $\iNxb : \config \rightarrow \probas(\config)$ as
\begin{equation} \label{def:iNxa}
\iNxb(\C) :=  N^{-2 \deltap} \int_{C(z_0', N^{\deltap})} \delta_{\theta_z' \cdot (\C \cap C(z_0', N^{\deltap}))} dz'.
\end{equation}
Such quantities are called empirical fields. We denote by $\PNbetaxb$ the law of the push-forward of $\PNbeta$ by the map $\iNxb$ - in other words: the empirical field observed around $z_0$ by averaging at the mesoscopic scale $N^{\deltap}$. Finally we denote by $\meq$ the density of $\mueq$ (see Assumption \ref{assumption:V}).

\begin{theo} \label{theo:main}
For any $0 < \deltap < 1/2$, for any $z_0$ in $\mathring{\Sigma}$, the sequence $\{\PNbetaxb\}_N$ obeys a large deviation principle at speed $N^{2\deltap}$ with good rate function $\left(\fbeta^{\meq(z_0)} - \min \fbeta^{\meq(z_0)} \right)$.

Moreover a good control at scale $\deltap$ holds in the sense of Definition \ref{def:goodcontrol}.
\end{theo}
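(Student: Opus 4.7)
The plan is a descent on the scale parameter. Theorem~1 of \cite{LebSer} gives an LDP at speed $N$ for empirical fields averaged over small macroscopic boxes around $z_0$ (via the tagged empirical fields introduced there), and combining it with exponential Chebyshev for the energy and the entropy yields good control at some initial scale $\delta_0$ just below $1/2$. I would then prove a \emph{descent lemma}: if good control at scale $\delta$ holds, then for every $\deltap < \delta$ both good control at scale $\deltap$ and the LDP at speed $N^{2\deltap}$ for $\{\PNbetaxb\}_N$ hold. One application with $\delta = \delta_0$ covers $\deltap < \delta_0$, and iterating the lemma finitely many times reaches any $\deltap \in (0, 1/2)$.

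To propagate good control from $\delta$ down to $\deltap$ I would tile $C(z_0', N^{\delta})$ by $\sim N^{2(\delta - \deltap)}$ sub-squares of sidelength $N^{\deltap}$. The bounds \eqref{goodcont2}--\eqref{goodcont1} at scale $\delta$ give an averaged $O(N^{2\deltap})$ budget per sub-square; promoting this to a uniform estimate uses that a sub-square carrying an abnormal excess of energy or points has Gibbs probability $\leq e^{-c N^{2\deltap}}$, which follows from the splitting formula \eqref{splitting} and the asymptotics of $\log \KNbeta$ from \cite[Cor.~1.5]{LebSer}. The same argument supplies exponential tightness for $\{\PNbetaxb\}_N$ at speed $N^{2\deltap}$. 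For the LDP upper bound I would split $\wN(\nu'_N)$ into an inner contribution on $C(z_0', N^{\deltap})$ and an outer one. Using continuity of $\meq$ at the interior point $z_0$, the inner piece equals $N^{2\deltap}$ times the functional $\bW_{\meq(z_0)}$ evaluated on $\iNxb(\nu'_N)$ up to a $o(N^{2\deltap})$ error; the outer piece is absorbed into the partition function via \cite[Cor.~1.5]{LebSer}, and the boundary cross-term is handled by good control at the larger scale $\delta$. The relative entropy appears by comparing the Gibbs density restricted to the mesoscopic box with the Poisson reference $\Poisson^{\meq(z_0)}$, which is the Sanov-type content of $\ERS[\cdot \mid \Poisson^{\meq(z_0)}]$.

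The lower bound relies on the screening construction developed in \cite{SS2d, PetSer, LebSer}, now run inside the mesoscopic square. Given a stationary $P$ with $\fbeta^{\meq(z_0)}(P) < +\infty$ and $\varepsilon > 0$, I would sample a typical realization of $P$ inside $C(z_0', N^{\deltap})$, apply the modification/screening procedures (encoded by $\Escr, \Emod, \Cscr, \Cmod$) to abelianize the electric field near $\partial C(z_0', N^{\deltap})$, and glue the result to a generic outside configuration of nearly minimal energy. The $N$-tuples constructed this way have empirical field $\varepsilon$-close to $P$; their Lebesgue volume is at least $\exp\bigl(-N^{2\deltap}\ERS[P \mid \Poisson^{\meq(z_0)}] - o(N^{2\deltap})\bigr)$, their inner energy is at most $N^{2\deltap}\bW_{\meq(z_0)}(P) + o(N^{2\deltap})$, and the outer energy matches $-\tfrac{2}{\beta}\log\KNbeta$ to leading order, which together give the matching LDP lower bound.

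The main obstacle is the localization of the long-range logarithmic interaction at the mesoscopic scale $N^{\deltap}$ with an error of order $o(N^{2\deltap})$. This is precisely what forces the bootstrap structure: uniform control on the boundary flux of $\Eloc$ across $\partial C(z_0', N^{\deltap})$, which requires an $O(N^{2\delta})$ energy estimate in the fattened box $C(z_0', N^{\delta})$ with $\delta > \deltap$, is what makes the inner/outer decomposition clean and allows the screening construction to interpolate between the prescribed inside pattern and the Gibbs-typical outside. Making the descent sufficiently quantitative to close the bootstrap and iterate down to arbitrary $\deltap \in (0, 1/2)$ is where most of the technical work will lie.
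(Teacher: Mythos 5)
Your architecture is the right one (bootstrap on the scale, inner/outer splitting of the energy, Sanov plus lower semi-continuity for the upper bound, screening plus microstate generation for the lower bound), but there is one step that does not work as you describe and it is load-bearing. You propose to handle the exterior contribution by "absorbing it into the partition function via \cite[Cor.~1.5]{LebSer}" and to conclude that "the outer energy matches $-\tfrac{2}{\beta}\log \KNbeta$ to leading order." The asymptotics of $\log \KNbeta$ from that corollary are only known up to $o(N)$ errors, and $o(N)$ is enormously larger than the speed $N^{2\deltap}$ of your LDP; the same objection defeats your propagation of good control, where you claim that a sub-square with abnormal energy has Gibbs probability $e^{-cN^{2\deltap}}$ "from the splitting formula and the asymptotics of $\log \KNbeta$" --- the splitting formula only controls the global energy at precision $O(N)$, and a local excess of size $N^{2\deltap}$ is invisible at that precision. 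The way out is to never invoke the global partition function at mesoscopic precision: after separating variables one defines a \emph{local} partition function
\[
\log \KNbetax := -\log \KNbeta + \log \max_{R_1,R_2,\Nou} {N \choose \Nou}\int e^{-\hal\beta(\Fou(\Xou)+N\Zeta(\Xou))}\,d\Xou,
\]
and the identical exterior integral appears in both the LDP upper bound and the screening-based lower bound, so it cancels when the two are combined; its normalized limit $N^{-2\deltap}\log\KNbetax \to \inf \fbetax$ is then identified \emph{a posteriori} by taking the full space $A=\probas_s(\config)$ in the resulting two-sided inequality. Both the weak LDP and the good-control estimate (via the event $\{\int_{C(z'_0,N^{\deltap})}|\Eloc_{\eta_0}|^2 + \Nxa\log\eta_0 \geq 2\pi N^{2\deltap}M\}$) come out of this cancellation, not out of any a priori knowledge of $\KNbeta$.

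A secondary point: your descent lemma as stated ("if good control holds at scale $\delta$ then for every $\deltap<\delta$ ...") is too strong and cannot be proved in one step. The boundary flux across $\partial C(z'_0,R_2)$ is only controlled by the energy in the fattened box, which is $O(N^{2\delta})$; after a mean-value argument this yields a flux bound of order $N^{2\delta-\deltapp}$ (up to logarithms), and this, together with the discrepancy bound $N^{4\delta/3}$ and the screening error terms, must all be $o(N^{2\deltap})$. These constraints force $\deltap$ to stay close to $\delta$ (in the paper, $\deltap > \max(\tfrac34\delta, \dots)$, and in particular $\deltap>\tfrac23\delta$), which is exactly why the iteration $f^{\circ k}(\delta)\to 0$ is needed rather than a single application. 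You gesture at this in your closing paragraph, but the quantitative restriction on the admissible range of $\deltap$ per step should be built into the statement of the descent lemma, since it dictates the entire structure of the induction.
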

Theorem \ref{theo:main} tells us in particular that the behavior around $z_0 \in \mathring{\Sigma}$ depends on $V$ only through the value $\meq(z_0)$, and in view of Lemma \ref{lem:scaling} it has only the effect of scaling the configurations. This yields another example of the \textit{universality} phenomenon: the small scale behavior of the particle system is essentially independent of the choice of $V$.

The first consequence of Theorem \ref{theo:main} is a bound on the discrepancy i.e. the difference between the number of points of $\nu'_N$ in a given square and the mass given by $\mupeq$.
\begin{coro} \label{coro:discr}
Let $z_0 \in \mathring{\Sigma}$, let $0 < \delta < 1/2$ and $\deltap \in (\delta/2, \delta)$. We have
\begin{equation} \label{discr}
\left| \int_{C(z'_0, N^\deltap)}  d\nu'_N - d\mupeq \right| \ll_{\delta_1} N^{\frac{4}{3} \delta}.
\end{equation}
\end{coro}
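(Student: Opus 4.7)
The plan is to reduce the discrepancy estimate to the energy control provided by Theorem~\ref{theo:main} (good control at scale $\delta$) through a Cauchy--Schwarz argument on the Poisson equation, with an optimally chosen smooth cutoff, as is classical in the Coulomb gas literature. First, I would apply Theorem~\ref{theo:main} at scale $\delta$: with $\deltap := \delta_1 < \delta$ in Definition~\ref{def:goodcontrol}, this yields, with $\delta_1$-overwhelming probability, $\mathcal{N}^{z_0}_{\delta} \preceq N^{2\delta}$ and, after fixing $\eta$ to a small constant so that $|\log \eta|$ is absorbed into the $O(1)$ prefactor, $\int_{C(z_0', N^\delta)} |\Eloc_\eta|^2 \preceq N^{2\delta}$.

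Next, for a transition width $w$ to be tuned (with $1 \leq w \leq \frac{1}{2}(N^\delta - N^{\delta_1})$), I would construct smooth cutoffs $\chi_\pm : \R^2 \to [0,1]$ supported in $C(z_0', N^\delta)$ with $\chi_- \leq \mathbf{1}_{C(z_0', N^{\delta_1})} \leq \chi_+$, $|\nabla \chi_\pm| \lesssim 1/w$ and $|\supp \nabla \chi_\pm| \lesssim N^{\delta_1} w$. Since $\div \Eloc_\eta = -2\pi(\nu'_{N,\eta} - \mupeq)$, integration by parts and Cauchy--Schwarz yield
\[
\left| \int \chi_\pm \, (d\nu'_{N,\eta} - d\mupeq) \right|^2 \leq \frac{1}{4\pi^2} \int |\nabla \chi_\pm|^2 \cdot \int |\Eloc_\eta|^2 \preceq_{\delta_1} \frac{N^{\delta_1 + 2\delta}}{w}.
\]
Sandwiching $\mathbf{1}_{C(z_0', N^{\delta_1})}$ between $\chi_-$ and $\chi_+$, and using the background estimate $\int (\chi_+ - \chi_-) \, d\mupeq \lesssim \meq(z_0) N^{\delta_1} w$ (while absorbing the Taylor correction from replacing $\nu'_{N,\eta}$ by $\nu'_N$), one obtains
\[
|D| \preceq_{\delta_1} \sqrt{N^{\delta_1 + 2\delta}/w} + N^{\delta_1} w.
\]
The balancing choice $w = N^{(2\delta - \delta_1)/3}$ gives $|D| \preceq_{\delta_1} N^{2(\delta + \delta_1)/3}$, which is $\ll N^{4\delta/3}$ since $\delta_1 < \delta$, with gap $\frac{2}{3}(\delta - \delta_1) > 0$. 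The lower constraint $\delta_1 > \delta/2$ in the statement is precisely what guarantees that the resulting bound is sharper than the typical mass $\sim \meq(z_0) N^{2\delta_1}$ of the square, making the estimate meaningful.

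The main technical care is in the bookkeeping of the correction terms: the Taylor error $\sim \eta^2 \|\Delta \chi_\pm\|_\infty \mathcal{N}^{z_0}_\delta$ arising when replacing $\nu'_{N,\eta}$ by $\nu'_N$ (controlled crudely by $\mathcal{N}^{z_0}_\delta \preceq N^{2\delta}$), the sandwich error between $\chi_\pm$ and the indicator, and the smooth variation of $\meq$ across $C(z_0', N^\delta)$ whose pre-image in original coordinates has sidelength $N^{\delta - 1/2} \to 0$. Each of these can be verified to be at most of order $N^{2(\delta + \delta_1)/3}$ at the optimal $w$, hence $\ll N^{4\delta/3}$, so none of them degrades the final exponent.
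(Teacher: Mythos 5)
Your argument is correct, but it follows a genuinely different route from the paper. The paper's proof of Corollary~\ref{coro:discr} is a two-line reduction: good control at scale $\delta$ (from Theorem~\ref{theo:main}) is fed into Lemma~\ref{lem:discr2}, which in turn rests on the discrepancy inequality $\D_R^2 \min(1, \D_R/R^2) \preceq \int_{C_{2R}}|E_\eta|^2$ imported from \cite[Lemma 3.8]{RougSer}; the case analysis on the $\min$ (this is where $\deltap > \delta/2$ enters for the paper) gives $\D_R^3 \preceq_{\deltap} N^{2(\delta+\deltap)}$, i.e.\ $|\D_R| \preceq_{\deltap} N^{\frac{2}{3}(\delta+\deltap)} \ll N^{\frac{4}{3}\delta}$. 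You instead re-derive the needed discrepancy bound from scratch: integrating $-\div \Eloc_\eta = 2\pi(\nu'_{N,\eta}-\mupeq)$ against a cutoff of transition width $w$ and applying Cauchy--Schwarz gives a smoothed discrepancy $\preceq_{\deltap} (N^{\deltap+2\delta}/w)^{1/2}$, the sandwich costs $N^{\deltap}w$, and optimizing $w = N^{(2\delta-\deltap)/3}$ lands on exactly the same exponent $N^{\frac{2}{3}(\delta+\deltap)}$. Your bookkeeping of the error terms (Taylor correction from the smearing, sandwich error, H\"older variation of $\meq$) is sound, and the support of $\nabla\chi_\pm$ does stay inside $C(z'_0,N^{\delta})$ at the optimal $w$, so the good-control energy bound applies. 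What each approach buys: the paper's is shorter and reuses Lemma~\ref{lem:discr2}, which is needed elsewhere anyway (Lemma~\ref{lem:discr3}, Corollary~\ref{coro:locallaw}); yours is self-contained, makes the origin of the exponent $\frac{4}{3}\delta$ transparent, and correctly identifies that in your version the constraint $\deltap > \delta/2$ is only needed for the estimate to beat the trivial bound $N^{2\deltap}$, not for the proof itself.
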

For $\deltap < \delta$ close to $\delta$, the bound $N^{\frac{4}{3}\delta}$ on the difference is much smaller than the typical value of each term, of order $N^{2\deltap}$. It allows us to prove a \textit{local law} in the following sense:
\begin{coro} \label{coro:locallaw}
Let $z_0 \in \mathring{\Sigma}$ and $0 < \delta \leq 1/2$ be fixed. Let $f$ be a $C^1$ function (which may depend on $N$) such that $f$ is supported in $C(z'_0, N^{\delta})$. Then for any $\delta/2 < \deltap < \delta$ we have
\begin{equation} \label{localaw}
N^{-2\delta} \left| \int_{C(z'_0, N^{\delta})} f (d\nu'_n - d\mupeq) \right| \preceq_{\deltap} \|\nabla f\|_{\infty} N^{\deltap} + \|f\|_{\infty} N^{-2\delta/3}.
\end{equation}
In particular if $f(z) = \tilde{f}(N^{-\delta}(z-z'_0))$ for some compactly supported $C^1$ function $\tilde{f}$ then $\|\nabla f\|_{\infty} \preceq N^{-\delta}$ and $\|f\|_{\infty} \preceq 1$, thus we get
\[
N^{-2\delta} \left| \int_{C(z'_0, N^{\delta})} f (d\nu'_n - d\mupeq) \right| \ll_{\deltap} 1.
\]
\end{coro}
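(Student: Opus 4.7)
The plan is to reduce the weighted estimate to the discrepancy bound of Corollary \ref{coro:discr} via a tiling-and-discretization procedure: partition the support of $f$ into mesoscopic squares, approximate $f$ by its value at each square's center, and bound the resulting constant and residual pieces separately. Partition $C(z'_0, N^{\delta})$ into a disjoint grid of $O(N^{2(\delta - \deltap)})$ squares $C_k := C(z_k, N^{\deltap})$ of sidelength $N^{\deltap}$ centered at points $z_k$. Because $z_0 \in \mathring{\Sigma}$ and $\delta < 1/2$, for $N$ large every $z_k N^{-1/2}$ lies in $\mathring{\Sigma}$, so Corollary \ref{coro:discr} and the good control at scale $\delta$ asserted in Theorem \ref{theo:main} both apply uniformly at each tile. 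A union bound over the polynomially many tiles preserves $\deltap$-overwhelming probability, since each individual failure event has probability smaller than any negative power of $N$ at the $\deltap$-scale.

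Writing $f = f(z_k) + (f - f(z_k))$ on each $C_k$ decomposes
\[
\int_{C(z'_0, N^{\delta})} f \, d(\nu'_N - \mupeq) \;=\; \sum_k f(z_k)\, D_k \;+\; \sum_k \int_{C_k} \bigl(f - f(z_k)\bigr)\, d(\nu'_N - \mupeq),
\]
where $D_k := \int_{C_k} d(\nu'_N - \mupeq)$. For the constant piece, Corollary \ref{coro:discr} gives $|D_k| \ll_{\deltap} N^{4\delta/3}$ on every tile, so the total contribution is $\preceq_{\deltap} \|f\|_{\infty}\, N^{2(\delta - \deltap) + 4\delta/3}$; dividing by $N^{2\delta}$ yields $\|f\|_{\infty}\, N^{4\delta/3 - 2\deltap}$, which is at most $\|f\|_{\infty}\, N^{-2\delta/3}$ once $\deltap$ is chosen close enough to $\delta$. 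For the residual piece, the mean value theorem bounds $\sup_{C_k} |f - f(z_k)| \leq \sqrt{2}\, \|\nabla f\|_{\infty}\, N^{\deltap}$, so its total contribution is at most $\sqrt{2}\,\|\nabla f\|_{\infty}\, N^{\deltap}\bigl(\mathcal{N}^{z_0}_{\delta} + \mupeq(C(z'_0, N^{\delta}))\bigr)$. Good control at scale $\delta$ gives $\mathcal{N}^{z_0}_{\delta} \preceq_{\deltap} N^{2\delta}$, and continuity of $\meq$ at $z_0$ yields $\mupeq(C(z'_0, N^{\delta})) \preceq \meq(z_0)\, N^{2\delta}$; after dividing by $N^{2\delta}$ this contribution is $\preceq_{\deltap} \|\nabla f\|_{\infty}\, N^{\deltap}$.

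Adding the two contributions proves the main estimate. The "in particular" clause follows immediately: for $f(z) = \tilde f(N^{-\delta}(z - z'_0))$ one has $\|\nabla f\|_{\infty} \preceq N^{-\delta}$ and $\|f\|_{\infty} \preceq 1$, so the two error terms become $N^{\deltap - \delta}$ and $N^{-2\delta/3}$, both $\ll_{\deltap} 1$ since $\deltap < \delta$. The only technical point to verify is that the union bound over the $N^{O(1)}$ tiles is compatible with the $\deltap$-overwhelming probability scale, which holds automatically since each application of Corollary \ref{coro:discr} fails with probability exponentially small in $N^{c}$ for every $c$. There is no genuine conceptual obstacle; the argument is a standard $L^{\infty}$--first-moment split tailored to the specific discrepancy and counting bounds delivered by Theorem \ref{theo:main}.
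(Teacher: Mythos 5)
Your proof is correct and follows essentially the same route as the paper's: tile $C(z'_0,N^{\delta})$ into $O(N^{2(\delta-\deltap)})$ squares of sidelength $N^{\deltap}$, write $f = f(z_k) + (f - f(z_k))$ on each tile, control the constant part with the discrepancy bound $|D_k| \ll_{\deltap} N^{4\delta/3}$ and the remainder with the mean value theorem together with the $N^{2\delta}$ bounds on the number of points and on the mass of $\mupeq$. Your added remarks on the union bound over polynomially many tiles and on the tile centers lying in the bulk are details the paper leaves implicit, and you even flag the same slight mismatch in the exponent ($N^{4\delta/3-2\deltap}$ versus the stated $N^{-2\delta/3}$) that the paper itself glosses over.
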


\paragraph{Comments and open questions.}
In the statement of the results we restrict ourselves to the following setting: we first pick a point $z_0$ in the interior of $\Sigma$ (called the \textit{bulk}) and then look at the point process in $C(z', N^{\delta})$ with $N^{-1/2} z' = z_0$. A careful inspection of the proof shows that we might have taken $z'$ depending on $N$ more finely, e.g. by considering a sequence $z'$ with $N^{-1/2} z' \to z_0 \in \mathring{\Sigma}$, while keeping the same conclusions. It does not seem possible to take $z' \to z_0 \in \partial \Sigma$ (the “edge case”) in general because the density $\meq$ may vanish near the boundary - however, this does not happen in the standard example of the quadratic potential, in which the density is constant up to the boundary of the support. Our analysis might be done in the edge case at a scale $\delta \geq \delta_c$ depending on the speed at which $\meq(z)$ vanishes, but we do not pursue this goal here.

The minimizers of the rate function are unknown in general, however it is proven in \cite[Corollary 1.4]{LebSer} that the Ginibre point process minimizes $\fbeta^{1}$ over $\probas_{s, 1}(\config)$ for $\beta = 2$.  We do not know whether uniqueness of the minimizers holds for $\beta =2$, nor for any value of $\beta$. Uniqueness of the minimizers for some $\beta > 0$ would imply that the empirical fields have a limit in law as $N \ti$, which would heuristically correspond to some “$\beta$-Ginibre” random point process. In that case, our results shows that the hypothetical convergence 
\[
\text{Empirical field averaged at scale $N^{\delta}$} \rightarrow \text{$\beta$-Ginibre}
\]
holds at arbitrarily fine scales $\delta > 0$, which would hint at the convergence in law of the non-averaged point process $\nu'_N$ to the conjectural $\beta$-Ginibre point process.

Another open question is the behavior of the minimizers as $\beta \ti$ (the low-temperature limit). The \textit{crystallization conjecture} (see e.g. \cite{crystal} for a review) predicts that the minimum of $\bW_{1}$ on $\probas_{s,1}(\config)$ is (uniquely) attained by the random stationary point process associated to the triangular lattice. In the high-temperature limit, it is proven in \cite[Theorem 2]{WBS} that minimizers of $\fbeta^1$ converge (in a strong sense) to $\Poisson^1$ as $\beta \t0$.

The result of \cite{LebSer} and most of the methods used in this paper are valid in a broader setting than the two-dimensional, logarithmic case, in particular we could think of treating the $1d$ log-gas (i.e. the $\beta$-ensembles). It turns out that an adaptation of the present method in the one-dimensional case allows one to improve the result of \cite{LebSer} to finer, mesoscopic scales, however, we have been unable so far to go down to the finest scale $N^{-1 + \epsilon}$ and we hope to return on this question in a subsequent work.

\subsection{Plan of the paper and sketch of the proof}
In Section \ref{sec:notations} we introduce some notation and we give the definitions of the main objects used throughout the paper, as well as their key properties. In Section \ref{sec:preliminaries} we gather preliminary results about the energy $\wN$ and we prove the main technical tool, called the “screening lemma”. Section~\ref{sec:LDPUB} is devoted to the proof of a LDP upper bound and Section~\ref{sec:LDPUB} to the lower bound. We combine these two steps to prove Theorem~\ref{theo:main} in Section~\ref{sec:conclusion}, together with Corollary~\ref{coro:discr} and \ref{coro:locallaw}. Section~\ref{sec:annexe} is devoted to intermediate results which we postpone there.

Let us now sketch how the proof of Theorem \ref{theo:main} goes. The basic idea is a bootstrap argument, we find that there exists $t < 1$ such that
\begin{equation} \label{bootstrap}
\text{Good control at scale $\delta$} \longrightarrow \left\lbrace \begin{array}{c}
\text{Large deviations at scale $\deltap$} \\
\text{Good control at scale $\deltap$}
\end{array}\right. \text{ for all $t\delta \leq \deltap < \delta$.}
\end{equation}
Once good control at scale $\delta = 1/2$ is established, Theorem \ref{theo:main} follows. A similar bootstrap argument was used in \cite{Rot-NodSer} for studying the minimizers of $\wN$ (which corresponds to the $\beta = + \infty$, or zero temperature case).

The main obstruction to obtaining LDP for empirical fields in our context is the non-locality of the energy \eqref{def:wN}: due to the long-range nature of the interactions, it is hard to localize the energy in a given square in such a way that it only depends of the point configuration in this square. Another way of seeing it is that $\Eloc(x)$ depends \textit{a priori} on the whole configuration $\XN$ and not only on the points close to $x$. 

To prove \eqref{bootstrap} we rely on the following steps: let $z_0 \in \mathring{\Sigma}$ be fixed. For the sake of simplicity let us assume that $\wN(\XN) = \frac{1}{2\pi} \int_{\R^2} |\Eloc|^2$, where $\Eloc$ is the local electric field defined in Section \ref{sec:goodcont} (see also Section \ref{sec:Eloc}).
\begin{enumerate}
\item For any $\XN$, we split the energy $\wN(\XN)$ as 
\[
\int_{\R^2} |\Eloc|^2 = \int_{C(z'_0, N^{\deltap})} |\Eloc|^2 + \int_{C(z'_0, N^{\deltap})^c} |\Eloc|^2,
\]
and we split $\XN$ as $\Xin + \Xou$ where $\Xin$ is the point configuration in $C(z'_0, N^{\deltap})$ and $\Xou$ is the point configuration in $C(z'_0, N^{\deltap})^c$.
\item We define $\Fin(\Xin)$ (resp. $\Fou(\Xou)$) as the minimal energy of an electric field associated to $\Xin$ (resp. $\Xou$). We thus have
\[
\wN(\XN) \geq \Fin(\Xin) + \Fou(\Xou).
\]
The two terms in the right-hand side become independent (they depend from two distinct sets of variables).
\item Inserting the previous inequality into the expression of the Gibbs measure \eqref{def:Gibbs2} we obtain, for any event $\mc{A}$ “concerning” $\Xin$ 
\begin{equation} \label{sketchdecompo}
\PNbeta(\mc{A}) \leq \frac{1}{\KNbeta} \left(\int_{\mc{A}} e^{- \frac{1}{2} \beta \Fin(\Xin)} d\Xin \right) \left(\int e^{- \frac{1}{2} \beta (\Fou(\Xou) + 2N\Zeta(\Xou))} d\Xou\right). 
\end{equation}
This can be used to prove a first LDP upper bound (taking $\mc{A} = \{\iNxb(\Xin) \in B(P,\epsilon)\}$) or a first “good control” estimate (taking $\mc{A} = \{\Fin(\Xin) \gg N^{2\deltap}\}$). 
\item Then we need to prove that \eqref{sketchdecompo} is sharp (at scale $\deltap$). Given $\Xou$ and $\Xin$, it amounts to be able to reconstruct (a family of) point configurations $\XN \approx \Xou + \Xin$ such that $\wN(\XN) \leq \Fin(\Xin) + \Fou(\Xou) + o(N^{2 \deltap})$. This is where the screening procedure is used: we modify $\Xou$ and the associated electric field a little bit (this procedure follows the line of work \cite{SS2d, SS1d, RougSer, PetSer} and is called \textit{screening} for reasons that will appear later) so that we may glue together $\Xin$ and the new $\Xou$ and create an electric field compatible with the new (slightly modified) point configuration $\XN$. It is then a general fact that $\wN(\XN)$ (the energy of the \textit{local} electric field associated to $\XN$) is the smallest energy in a wide class of compatible electric fields.

In particular, proving a partial converse to \eqref{sketchdecompo} allows us to estimate the “local partition function”
\[
\frac{1}{\KNbeta}\left(\int e^{- \frac{1}{2} \beta (\Fou(\Xou) + 2N\Zeta(\Xou))} d\Xou\right),
\] 
and also to show a LDP lower bound. Combined with the estimates of the previous step, it proves \eqref{bootstrap}.
\end{enumerate}

\paragraph{\textit{Acknowledgements.}} The author would like to thank his PhD supervisor, Sylvia Serfaty, for helpful comments on this work.

\section{Notations, assumptions and main definitions} \label{sec:notations}

\subsection{Assumption on the potential}
\begin{assumption} \label{assumption:V}
The potential $V$ is such that 
\begin{enumerate}
\item $V$ is lower semi-continuous (l.s.c.) and bounded below.
\item The set $\{x \in \R^2 \ | \ V(x) < \infty \}$ has positive logarithmic capacity.
\item We have  $\lim_{|x|\to \infty} \frac{V(x)}{2}- \log |x|= + \infty$.
\end{enumerate}
These first three conditions ensure that the equilibrium measure $\mueq$ is well-defined and has compact support $\Sigma$. 
Furthermore we ask that the measure $\mueq$ has a density $\meq$ which is $\kappa$-Hölder in $\Sigma$, for some $0 < \kappa \leq 1$
\begin{equation} \label{assum-Holder}
|\meq(x) - \meq(y)| \leq |x-y|^{\kappa}.
\end{equation}
\end{assumption}
If $V$ is $C^2$, it is known that $\mueq$ is absolutely continous with respect to the Lebesgue measure on $\R^2$ and its density coincides with $\Delta V$ in $\Sigma$. Thus in particular \eqref{assum-Holder} is satisfied as soon as $V$ is $C^{2,\kappa}$. Let us observe that the third assumption (that $V$ is “strongly confining”) could be slightly relaxed into 
\[
\liminf_{|x| \ti} \frac{V(x)}{2} - \log |x| > - \infty,
\]
i.e. $V$ is only “weakly confining”, in which case the support $\Sigma$ might not be compact (see \cite{Hardy} for a proof of the first-order LDP in this case). We believe that Theorem \ref{theo:main} should extend to the non-compact case as well, since it is really a \textit{local} result, but we do not pursue this goal here.

\subsection{Point configurations and point processes} \label{sec:configpp}
\paragraph{Point configurations.}
If $B$ is a Borel set of $\R^2$ we denote by $\config(B)$ the set of locally finite point configurations in $B$ or equivalently the set of non-negative, purely atomic Radon measures on $B$ giving an integer mass to singletons. We will often write $\mathcal{C}$ for $\sum_{p \in \mathcal{C}} \delta_p$. We endow the set $\config := \config(\R^2)$ (and the sets $\config(B)$ for $B$ Borel) with the topology induced by the topology of weak convergence of Radon measure (also known as vague convergence or convergence against compactly supported continuous functions), these topologies are metrizable and we fix a compatible distance $d_{\config}$. 

\paragraph{Volume of configurations.}
Let $B$ be a Borel set of $\R^2$. For any $N \geq 1$, let $\sim_N$ be the equivalence relation on $B^N$ defined as $(x_1, \dots, x_N) \sim_N (y_1, \dots, y_N)$ if and only if there exists a permutation $\sigma \in \mathfrak{S}_N$ (the symmetric group on $N$ elements) such that $x_i = y_{\sigma(i)}$ for $i =1, \dots, N$. We denote by $B^N / \mathfrak{S}_N$ the quotient set and by $\pi_N$ the canonical projection $B^N \to B^N / \mathfrak{S}_N$. The set of finite point configurations in $B$ can be identified to $
\{\emptyset\} \cup \bigcup_{N = 1}^{+\infty} B^N / \mathfrak{S}_N$.

If $\cA \subset B^N / \mathfrak{S}_N$ we define $\hat{\cA} \subset B^N$ as $\hat{\cA} := \bigcup_{\C \in \cA} \C.$ It is easy to see that $\hat{\cA}$ is the largest subset of $B^N$ such that the (direct) image of $\hat{\cA}$ by $\pi_N$ is $\cA$. 

We will call “the volume of $\cA$” and write (with a slight abuse of notation) $\Leb^{\otimes N}(\cA)$ the quantity $\Leb^{\otimes N}(\hat{\cA})$.

\paragraph{Random point process.}
A random point process is a probability measure on $\config$. We denote by $\probas_s(\config)$ the set of stationary random point processes i.e. those which are invariant under (push-forward by) the natural action of $\R^2$ on $\config$ by translations. We endow $\probas_s(\config)$ with the topology of weak convergence of probability measures, and we fix a compatible distance $d_{\probas(\config)}$, e.g. the $1$-Wasserstein distance. Throughout the text we will denote by $B(P, \epsilon)$ the closed ball of center $P$ and radius $\epsilon$ for $d_{\probas(\config)}$.

\subsection{Electric systems and electric fields} \label{sec:Eloc}
\paragraph{Finite electric system.}
We will call an “electric system” a couple $(\C, \mu)$ where $\C$ is a point configuration and $\mu$ is a non-negative measurable bounded function in $\R^2$. We say that the system is finite if $\C$ is finite and $\mu$ is compactly supported. We say that the system is neutral if it is finite and $\int_{\R^2} d\C = \int_{\R^2} \mu(x) dx$.

\paragraph{Electric fields.} Let $1 < p < 2$ be fixed. We define the set of electric fields $\Elec$ as the set of vector fields in $\Lploc(\R^2, \R^2)$ such that
\begin{equation}
\label{def:Elec} - \div E = \cds \left( \C - \mu \right) \text{ in } \R^2
\end{equation}
for some electric system $(\C, \mu)$. When \eqref{def:Elec} holds we say that $E$ is compatible with $(\C, \mu)$ in $\R^2$ and we denote it by $E \in \Elec(\C, \mu)$. 
If $K$ is a compact subset of $\R^2$ with piecewise $C^1$ boundary we let $\Elec(\C,\mu, K)$ be the set of electric fields which are compatible with $(\C, \mu)$ in $K$ i.e. such that 
\begin{equation*}
\label{def:Elec2} - \div E = \cds \left( \C - \mu \right) \text{ in } K.
\end{equation*}

We denote by $\Eleco$ the set of decaying electric fields, such that $E(z) = O(|z|^{-2})$ as $|z| \ti$. We let $\Eleco(\C, \mu,K)$ be the set of electric fields which are compatible with $\C, \mu$ in $K$ \textit{and} decay.

\paragraph{Local electric fields.} 
If $(\C, \mu)$ is a finite electric system there is a natural compatible electric field, namely the local electric field defined as $\Eloc :=  - \nabla \log * (\C - \mu)$. We also define the “local electric potential” $\Hloc := - \log * (\C - \mu )$.
The scalar field $\Hloc$ corresponds physically to the electrostatic potential generated by the point charges of $\C$ together with a  background of “density” $\mu$. The vector field $\Eloc$ can be thought of as the associated electrostatic field. It is easy to see that $\Eloc$ fails to be in $L^2_{\rm{loc}}$ because it blow ups like $|x|^{-1}$ near each point of $\C$, however $\Eloc$ is in $\Lploc(\R^{2}, \R^{2})$ for any $1 < p < 2$.

\paragraph{Truncation procedure.}
The renormalization procedure of \cite{RougSer,PetSer} uses a truncation of the singularities which we now recall. We define the truncated Coulomb kernel as follows:
for  $0 < \eta < 1$ and  $x \in \R^2$, let $f_\eta(x)= \left(- \log |x|- \log \eta\right)_+$. If $(\C, \mu)$ is an electric system and $E \in \Elec(C, \mu)$ we let
\begin{equation}
\label{def:Eeta} E_{\eta}(X) := E(X) - \sum_{p \in \C} \nabla f_\eta (X -p).
\end{equation}

\subsection{Renormalized energy} \label{sec:energie}
\paragraph{For finite point configurations.}
It follows from the definition, and the fact that $-\log$ is (up to a constant) the Coulomb kernel in dimension $2$, that  $- \Delta \Hloc = \cds (\C - \mu)$, where $\Hloc$ denotes the local electric potential associated to a finite electric system $(\C, \mu)$. We may thus observe that $\wN(\C)$ (defined in \eqref{def:wN}) can be written
\[\wN(\C) \approx - \frac{1}{2\pi} \int \Hloc \Delta \Hloc \] (up to diagonal terms).
Using $\Eloc = \nabla \Hloc$ and integrating by parts we obtain heuristically $\wN(\C) = \int_{\R^2} |\Eloc|^2$. However, this computation does not make sense because $\Eloc$ fails to be in $L^2$ around each point charge (and indeed the diagonal is excluded in the definition of $\wN$). Following the method of \cite{bbh}, the correct way of giving a sense to “$\wN(\C) \approx \int_{\R^2} |\Eloc|^2$” is to use a \textit{renormalization} procedure, using the truncation at scale $\eta$ defined above. The following is proven in \cite{SS2d}.
\begin{lem} \label{lem:REfinite} For any $N \geq 1$ and any $\XN \in (\R^2)^N$, we have
\begin{equation} \label{REfinite}
\wN(\nu'_N) = \frac{1}{\cds}  \lim_{\eta \t0} \int_{\R^2} \left(|\Eloc_{\eta}|^2 + N \log \eta \right).
\end{equation} 
Moreover $\wN$ is bounded below on $(\R^2)^N$ by $O(N)$.
\end{lem}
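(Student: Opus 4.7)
The plan is to rewrite $\wN$ as an $L^2$ norm of an electric field, after identifying and subtracting the logarithmically divergent self-interaction of each point charge. The heuristic $\cds \wN(\nu'_N) \approx \int|\Eloc|^2$ fails because $\Eloc$ behaves like $(x-x'_i)/|x-x'_i|^2$ near each particle, so $|\Eloc|^2$ is not locally integrable; smoothing at scale $\eta$ restores integrability, and the divergent part is exactly computable.

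First I would perform the integration by parts. For fixed $\eta > 0$, the density $\nu'_{N,\eta}$ is bounded and compactly supported, so $\Hloc_\eta := -\log * (\nu'_{N,\eta} - \mupeq)$ is $C^1$. Since $\int d\nu'_{N,\eta} = N = \int d\mupeq$, the system is neutral, forcing $\Hloc_\eta(x) = O(|x|^{-1})$ and $|\Eloc_\eta|(x) = O(|x|^{-2})$ at infinity, so the boundary contributions vanish. Using $-\Delta \Hloc_\eta = \cds (\nu'_{N,\eta} - \mupeq)$ yields
\[
\int_{\R^2} |\Eloc_\eta|^2 = \cds \iint -\log|x-y|\, d(\nu'_{N,\eta} - \mupeq)(x)\, d(\nu'_{N,\eta} - \mupeq)(y).
\]

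Next I would expand the right-hand side into point-point, point-background, and background-background pieces. For $\eta$ smaller than half the minimum pairwise distance between the $x'_i$, Newton's shell theorem gives two key identities: the self-interaction $\iint -\log|x-y|\, d\delta^{(\eta)}_{x'_i}(x)\, d\delta^{(\eta)}_{x'_i}(y) = -\log \eta$, and for $i \neq j$ the interaction of two distinct smeared charges is exactly $-\log|x'_i - x'_j|$. For the cross term with $\mupeq$, the continuity of $h := -\log * \mupeq$ (since $\mupeq$ is bounded and compactly supported) implies $\int h\, d\delta^{(\eta)}_{x'_i} \to h(x'_i)$ as $\eta \to 0$, with an error $O(\eta^2 \|\mupeq\|_\infty)$ coming from the defect in the mean-value property. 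Summing the three pieces, the bounded part assembles to $\sum_{i \neq j} -\log|x'_i - x'_j| - 2 \sum_i h(x'_i) + \iint -\log|x-y|\, d\mupeq\, d\mupeq$, which is exactly $\wN(\nu'_N)$ by \eqref{def:wN}, while the self-interactions contribute a term proportional to $N \log \eta$; passing $\eta \to 0$ gives \eqref{REfinite}.

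For the lower bound I would not use the smoothed field (which only produces a bound degenerating with the minimum pairwise distance) but rather the truncation $E_{\eta_0}$ at a fixed scale $\eta_0$ of order one, defined via the cutoff $f_{\eta_0}$: this field is in $L^2$ with a norm controlled independently of the configuration, since truncating at a uniform scale costs only a bounded amount per point. A parallel expansion yields $\int|E_{\eta_0}|^2 = \cds\, \wN(\nu'_N) - \cds N \log\eta_0 + O(N)$, and combining $\int|E_{\eta_0}|^2 \geq 0$ with the fact that $\log\eta_0$ is a fixed constant gives $\wN(\nu'_N) \geq -CN$ uniformly in $\XN$. The main technical obstacle I anticipate is controlling the $O(\eta^2)$ cross-term error uniformly in the configuration---so that neither the limit in the identity nor the uniform lower bound is affected by local accumulations of points near the boundary of $\Sigma'$ where $h$ may fail to be smoother---but apart from this the proof is algebraic manipulation along the lines of \cite{SS2d, RougSer, PetSer}.
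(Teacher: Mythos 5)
Your proof is correct and follows the standard route that the paper itself delegates entirely to \cite{SS2d} (the paper gives no argument for Lemma \ref{lem:REfinite} beyond that citation): integration by parts using the decay forced by neutrality, Newton's theorem to evaluate the smeared self-interactions as $-\log\eta$ and the distinct-pair interactions exactly once $\eta$ is below half the minimal spacing, and the mean-value defect $O(\eta^2\|\mupeq\|_\infty)$ for the background cross term. Note that this defect is automatically uniform in the configuration since $\|\mupeq\|_\infty = \|\meq\|_\infty = O(1)$, so the ``technical obstacle'' you anticipate in your last sentence is not actually one.

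There is one imprecision in the lower-bound step. At a fixed truncation scale $\eta_0$ of order one, the claimed identity $\int|\Eloc_{\eta_0}|^2 = \cds\,\wN(\nu'_N) - \cds N\log\eta_0 + O(N)$ is false as an equality: for pairs with $|x'_i-x'_j|<2\eta_0$ the smeared pair interaction differs from $-\log|x'_i-x'_j|$ by an amount that is unbounded as points coalesce, so the error is not $O(N)$ uniformly. What does hold is the one-sided bound $\int|\Eloc_{\eta_0}|^2 + \cds N\log\eta_0 \leq \cds\,\wN(\nu'_N) + O(N)$, because $-\log*\delta^{(\eta_0)}_{p} \leq -\log|\cdot-p|$ pointwise (so every smeared interaction is bounded above by the true one) and the background potential is superharmonic (so the mean-value defect has a favorable sign and size $O(\eta_0^2)$ per point). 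This is precisely the direction your argument uses --- it is the content of the almost-monotonicity Lemma \ref{lem:monoton1} --- so combining it with $\int|\Eloc_{\eta_0}|^2\geq 0$ still yields $\wN \succeq -N$; just replace your equality by this inequality.
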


\paragraph{For infinite electric fields.}
Let $(\C, \mu)$ be an electric system and $E \in \Elec(\C, \mu)$. 
We let $\mc{W}_{\eta}(E)$ be
\[
\mc{W}_{\eta}(E) := \frac{1}{2\pi} \limsup_{R \ti} R^{-2}  \int_{C_R} \left(|E_{\eta}|^2 +  \mu \log \eta\right).
\]
The renormalized energy of $E$ is then defined as $\mc{W}(E) := \limsup_{\eta\to 0} \mc{W}_{\eta}(E)$.

\paragraph{For (random) infinite point configurations.} 
If $(\C, \mu)$ is an electric system with $\mu$ constant equal to some $m > 0$ we define 
\begin{equation} \label{def:WdeE}
\Wc(\C) = \inf_{E \in \Elec(\C, \mu)} \mc{W}(E).
\end{equation}
Similarly if $P$ is a random point process we let $\Wc(P) = \Esp_{P} \left[ \Wc \right]$ for any $m > 0$.

The following lower semi-continuity result was proven in \cite[Lemma 4.1]{LebSer}.
\begin{lem} \label{lem:lsciW}
For any $m > 0$ the map $P \mapsto \Wc(P)$ is lower semi-continuous on $\probas_{s}(\config)$. Moreover its sub-level sets are compact. In particular, $\Wc$ is bounded below.
\end{lem}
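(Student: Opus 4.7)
The natural approach is to express $\Wc(P)$ as a supremum of manifestly lower semi-continuous functionals. For $R > 0$ and $0 < \eta < 1$, consider
\[
\mc{W}_{R,\eta}(P) := \inf \frac{1}{2\pi R^2} \Esp_{P}\left[\int_{\carr_R}\left(|E_{\eta}|^2 + m \log \eta\right)\right],
\]
where the infimum is taken over jointly measurable vector fields $E$ compatible in the distributional sense $-\div E = \cds(\C - m)$ on $\carr_R$. Stationarity of $P$ together with a subadditivity argument in $R$ allow one to replace the outer $\limsup_R$ in the definition of $\mc{W}$ by a supremum, and taking a sup in $\eta$ (rather than $\limsup$) up to a negligible error would give $\Wc(P) = \sup_{R, \eta} \mc{W}_{R,\eta}(P)$. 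Since a supremum of lower semi-continuous functionals is lower semi-continuous, it then suffices to establish lsc of each $\mc{W}_{R,\eta}$.

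The central technical point is therefore the lsc of $\mc{W}_{R,\eta}$ at fixed $R$ and $\eta$, which I would prove by a direct selection argument. Given $P_n \to P$ in $\probas_s(\config)$ with $\liminf_n \mc{W}_{R,\eta}(P_n) < \infty$, I would couple the $P_n$ and $P$ on a common probability space so that $\C_n \to \C$ as Radon measures almost surely, and choose for each $n$ a near-minimizing compatible field $E_n$. The truncation at scale $\eta$ prevents blow-up near the point charges, so the $E_n$ are uniformly bounded in $\Lploc$, the bound depending only on the local number of points (controlled by the intensity $m$). Extracting a weak $\Lploc$ limit $E$, the compatibility condition $-\div E = \cds(\C - m)$ passes to the limit as a closed linear constraint, and convexity of $E \mapsto \int|E|^2$ yields the desired lower bound on the limit.

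For compactness of sub-level sets I would use two ingredients. First, $\Wc(P) < +\infty$ forces the intensity of $P$ to equal $m$: otherwise Birkhoff's theorem would give $\frac{1}{R^2}\int_{\carr_R}(d\C - m)\to c \neq 0$ almost surely, forcing $|E|^2$ to grow like $R^2$ on most of $\carr_R$ and contradicting the finiteness of $\mc{W}(E)$. Second, stationary point processes of fixed intensity $m$ form a tight family in $\probas_s(\config)$ since the expected number of points in any compact $K$ is $m|K|$. Combined with the closure of $\{\Wc \leq C\}$ coming from lsc, Prokhorov's theorem yields compactness. Finally, boundedness below of $\Wc$ follows from the finite-volume estimate $\wN \geq -C'N$ of Lemma~\ref{lem:REfinite}: dividing by $R^2$ and using $\Esp_P\bigl[\#(\C \cap \carr_R)\bigr] = m R^2$ gives $\Wc(P) \geq -C'm$ uniformly in $P$. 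The main obstacle will be the selection step above, in particular ensuring that the weak $\Lploc$ limit of compatible truncated fields is itself compatible with the limit configuration and that the truncation interacts well with the passage to the limit.
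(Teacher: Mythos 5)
The paper does not actually prove this lemma: it is quoted from \cite[Lemma 4.1]{LebSer}, and your overall strategy (weak $\Lploc$/$L^2$ compactness of near-minimizing truncated fields, passage to the limit in the linear constraint $-\div E = \cds(\C-m)$, weak lower semi-continuity of the $L^2$ norm, then tightness from the forced intensity $m$) is exactly the mechanism used there; it is also the same argument the present paper carries out in Step 1 of the proof of Lemma \ref{lem:lowerboundint}. Your selection step for fixed $R,\eta$ is sound, and the compactness and boundedness-below parts are fine modulo small inaccuracies (Birkhoff gives $c\neq 0$ only with positive probability, which suffices via Lemma \ref{lem:discr}; and Lemma \ref{lem:REfinite} concerns the background $\mupeq$, not a constant background --- the correct input for the lower bound is positivity of $\int_{C_R}|E_{1/2}|^2$ combined with the almost-monotonicity of Lemma \ref{lem:monoton1}).

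The one genuinely delicate point you gloss over is the identity $\Wc(P)=\sup_{R,\eta}\mc{W}_{R,\eta}(P)$, and specifically the inequality $\Wc(P)\geq \mc{W}_{R,\eta}(P)$ for fixed $R$. Since $\Wc(\C)$ is defined as an infimum over compatible fields \emph{per configuration}, a measurable near-optimal selection $\C\mapsto E(\C)$ need not be translation-equivariant, so the random field it induces is not a stationary process; consequently neither stationarity of $P$ nor the (super)additive ergodic theorem applies directly to $\int_{C_R}|E_\eta|^2$, and ``a subadditivity argument in $R$'' does not by itself let you bound the fixed-$R$ expected energy density by $\Esp_P[\limsup_{R'}(R')^{-2}\int_{C_{R'}}]$ (reverse Fatou also fails without domination). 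This interchange is the real content of the lemma and is the reason \cite{LebSer} formulates the energy in terms of stationary \emph{electric processes} (for which $R^{-2}\Esp\int_{C_R}|E_\eta|^2=\Esp[|E_\eta(0)|^2]$ identically) and proves separately that this formulation agrees with the per-configuration infimum, via a stationarization/averaging-over-translations argument using convexity of the energy. Your plan is correct in outline, but as written this step would not go through without that additional ingredient.
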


\subsection{Specific relative entropy} \label{sec:entropy}
For any $P \in \probas_{s}(\config)$, the specific relative entropy $\ERS[P|\Poisson^m]$ is defined as in \eqref{def:ERS} (see e.g. \cite{Georgii1}).
\begin{lem} \label{lem:ERS}
For any $m \geq 0$ the map $P \mapsto \ERS[P|\Poisson^m]$ is well-defined on $\probas_{s}(\config)$, it is affine lower semi-continous and its sub-level sets are compact.
\end{lem}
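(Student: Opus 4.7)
The statement is classical and a proof (for point processes in $\R^d$) is given in Chapter~15 of Georgii's book \cite{Georgii1}; I sketch here the main steps adapted to our setting.

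\textbf{Existence of the limit.} The starting point is a (near-)subadditivity property for the finite-volume entropy $\phi(R) := \Ent\bigl(P_{|C_R} | \Poisson^m_{|C_R}\bigr)$. Because the Poisson point process $\Poisson^m$ has the product property (its restrictions to disjoint Borel sets are independent), if one partitions $C_R$ into $k^2$ translates of $C_{R/k}$, then $\Poisson^m_{|C_R}$ factors as a product of $k^2$ identical copies. Combined with the stationarity of $P$ (which makes the marginals of $P$ on each sub-square identical) and the standard subadditivity of relative entropy under conditioning/marginalization, one obtains $\phi(R) \leq k^2 \phi(R/k) + o(R^2)$. A Fekete-type argument then yields existence of $\lim_{R \to \infty} R^{-2} \phi(R)$ in $[0, +\infty]$. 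The well-definedness on all of $\probas_s(\config)$ (including the value $+\infty$) follows.

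\textbf{Affine character.} Convexity of $P \mapsto \ERS[P | \Poisson^m]$ follows from convexity of the finite-volume relative entropy $P \mapsto \Ent(P_{|C_R}|\Poisson^m_{|C_R})$ and the fact that pointwise limits of convex functions are convex. For the reverse inequality, given $P = tP_1 + (1-t)P_2$ with $P_1, P_2$ stationary, one uses the elementary bound $\Ent(tQ_1 + (1-t)Q_2 | R) \geq t \Ent(Q_1|R) + (1-t) \Ent(Q_2|R) - \log 2$ for any probability measures; after dividing by $R^2$ and letting $R \to \infty$ the additive constant vanishes, yielding concavity, hence affinity. (An alternative and cleaner route is the variational characterization of specific entropy as a supremum of linear functionals of $P$ involving local tame densities, which makes affinity immediate.)

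\textbf{Lower semi-continuity and compactness of sublevel sets.} For each fixed $R$, the map $P \mapsto \Ent(P_{|C_R}|\Poisson^m_{|C_R})$ is lower semi-continuous on $\probas(\config)$ for the weak topology, since relative entropy is a supremum of continuous linear functionals (Donsker-Varadhan representation). Specific entropy is obtained as a limit of $R^{-2}\phi(R)$, but the near-subadditivity combined with standard manipulations allows one to write $\ERS[\cdot|\Poisson^m]$ as a supremum of lsc functions, hence it is lsc. For compactness of sublevel sets $\{\ERS[P|\Poisson^m] \leq C\}$, it suffices by lsc to prove tightness in $\probas_s(\config)$, which, for point processes, reduces to controlling the expected number of points in each $C_R$. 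This is done via the classical entropy inequality $\Esp_P[f] \leq \ERS[P|\Poisson^m] \cdot R^{-2} + R^{-2} \log \Esp_{\Poisson^m}[e^f]$ applied to $f = t \cdot \mathrm{Card}(\C \cap C_R)$ with $t > 0$ small; the Poisson Laplace transform is explicit and the resulting bound on $\Esp_P[\mathrm{Card}(\C \cap C_R)]$ is uniform on the sublevel set, yielding tightness.

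The main obstacle is the subadditivity step: the two factorizations (of $\Poisson^m$ as a true product, and of $P_{|C_R}$ as an approximate product via stationarity) do not play perfectly together because $P$ has no independence structure, so one must carefully handle the cross terms, typically by introducing a buffer region between sub-squares and bounding the error in entropy by terms of order $R$ (boundary effect), which is negligible after dividing by $R^2$.
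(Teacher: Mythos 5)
The paper offers no proof of this lemma at all: it simply refers the reader to \cite{seppalainen} (Chap.~6), so your sketch is a reconstruction of the standard textbook argument. Most of it is correct: the affinity argument via the elementary two-sided bound with the $\log 2$ defect, lower semi-continuity of the finite-volume entropies via the Donsker--Varadhan variational formula, and tightness of sublevel sets via the entropy inequality applied to $f = t\,\card(\C \cap C_R)$ are all exactly the standard steps.

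The one genuine error is the direction of the additivity property in your existence step. Because $\Poisson^m$ restricted to a partition of $C_R$ into disjoint sub-squares is a \emph{product} measure, the chain rule for relative entropy together with Jensen's inequality gives \emph{super}additivity,
\[
\Ent\bigl(P_{|C_R} \,|\, \Poisson^m_{|C_R}\bigr) \;\geq\; \sum_i \Ent\bigl(P_{|B_i} \,|\, \Poisson^m_{|B_i}\bigr),
\]
not subadditivity: the reverse inequality fails for processes with strong correlations between boxes, and the mechanism you invoke (product structure of the reference measure plus marginalization) only ever produces the $\geq$ direction. Superadditivity, combined with stationarity, is precisely what one wants: Fekete then shows that $R^{-2}\phi(R)$ converges to its \emph{supremum}, the limit is well-defined in $[0,+\infty]$, and the resulting representation of $\ERS[\,\cdot\,|\Poisson^m]$ as a supremum of lower semi-continuous finite-volume functionals delivers lower semi-continuity immediately. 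In particular, no buffer regions or boundary corrections are needed, and the ``main obstacle'' described in your final paragraph dissolves; as written, the claimed near-subadditivity $\phi(R) \leq k^2\phi(R/k) + o(R^2)$ cannot serve as the starting point of the proof (it only becomes true a posteriori, once the limit is known to exist). With ``subadditive'' replaced by ``superadditive'' throughout, your sketch matches the cited reference.
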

\begin{proof}
 We refer to \cite[Chap. 6]{seppalainen} for a proof of these statements.
\end{proof}

\paragraph{Large deviations for the reference measure.}
For $\delta  >0$, $N \geq 1$ and $S_N \in \N$, let $\textbf{B}_{S_N,N,\delta}$ be the law of the Bernoulli point process with $S_N$ points in $C(0,N^{\delta})$. 
\begin{prop} \label{prop:Sanov}
Let $P \in \probas_{s,c}(\config)$ and $\delta > 0$, let $\{S_N\}_N$ be such that $S_N \sim_{N \ti} m N^{2\delta}$ for some $m > 0$. We have
\begin{equation} \label{Sanovc}
\lim_{\epsilon \t0} \lim_{N \ti} N^{-2\delta} \log \B_{S_N,N, \delta}(\{ \iNxa(\C) \in B(P, \epsilon) \}) = - \ERS[P|\Poisson^m].
\end{equation}
\end{prop}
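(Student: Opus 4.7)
The plan is to reduce to the classical process-level Sanov theorem for Poisson point processes and then transfer the resulting LDP to the Bernoulli setting via the standard coupling between the two. A classical Sanov-type result (see e.g.\ \cite[Chap.~6]{seppalainen}) asserts that the push-forward of $\Poisson^m|_{C(0,N^{\delta})}$ by the map $\iNxa$ obeys an LDP at speed $N^{2\delta}$ with good rate function $\ERS[\cdot|\Poisson^m]$; the task is therefore to replace $\Poisson^m$ by $\B_{S_N, N, \delta}$ without changing the rate.

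The link between the two measures is the elementary identity: letting $K_N$ denote the number of points of $\Poisson^m$ in $C(0, N^{\delta})$, then conditionally on $\{K_N = k\}$ the configuration is distributed as $\B_{k, N, \delta}$. Thus for any event $A$,
\[
\B_{S_N, N, \delta}(A) = \frac{\Poisson^m(A \cap \{K_N = S_N\})}{\Poisson^m(K_N = S_N)}.
\]
Since $K_N \sim \mathrm{Poisson}(m N^{2\delta})$ with $S_N \sim m N^{2\delta}$, Stirling gives $\Poisson^m(K_N = S_N) = \Theta(N^{-\delta})$, so this denominator contributes only $O(\log N) = o(N^{2\delta})$ and is invisible at the logarithmic scale of interest.

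For the upper bound I would drop the intersection with $\{K_N = S_N\}$ in the numerator to get $\B_{S_N, N, \delta}(\iNxa \in B(P, \epsilon)) \leq \Poisson^m(\iNxa \in B(P, \epsilon))/\Poisson^m(K_N = S_N)$, apply the Poisson LDP upper bound to the numerator, and use the lower semi-continuity of $\ERS[\cdot|\Poisson^m]$ (Lemma~\ref{lem:ERS}) to obtain the matching upper bound $-\ERS[P|\Poisson^m]$ after sending $\epsilon \to 0$.

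The lower bound is where the main difficulty lies. The idea is to exploit the fact that any configuration $\C$ satisfying $\iNxa(\C) \in B(P, \epsilon)$ must have $|K_N - S_N| \leq C\epsilon N^{2\delta}$: indeed the intensity of the averaged empirical field $\iNxa(\C)$ equals $K_N/N^{2\delta}$ up to boundary effects of order $N^{-\delta}$, while the intensity of $P$ is $m \sim S_N/N^{2\delta}$. Coupling $\B_{k, N, \delta}$ with $\B_{S_N, N, \delta}$ by adding or removing $|k - S_N|$ independent uniform points shifts the empirical field by $O(|k-S_N|/N^{2\delta}) = O(\epsilon)$ in $d_{\probas(\config)}$, so for all admissible $k$ one has $\B_{k, N, \delta}(\iNxa \in B(P, \epsilon)) \leq \B_{S_N, N, \delta}(\iNxa \in B(P, 2\epsilon))$. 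Summing over the $O(\epsilon N^{2\delta})$ values of $k$ (a polynomial count, hence negligible at speed $N^{2\delta}$) yields
\[
\Poisson^m(\iNxa \in B(P, \epsilon)) \leq \B_{S_N, N, \delta}(\iNxa \in B(P, 2\epsilon))(1 + o(1)),
\]
and combining with the Poisson LDP lower bound and sending $\epsilon \to 0$ gives the matching lower bound. The main technical point is this intensity-pinning coupling and the verification that perturbing the cardinality by a small fraction of $N^{2\delta}$ affects the empirical field by at most $O(\epsilon)$ in $d_{\probas(\config)}$.
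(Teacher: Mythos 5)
Your overall strategy coincides with the paper's: invoke the process-level Sanov theorem for the Poisson point process (the paper cites \cite[Theorem 3.1]{Georgii1}), represent $\B_{S_N,N,\delta}$ as $\Poisson^m$ conditioned on the event $\{K_N=S_N\}$ (where $K_N$ is the number of Poisson points in $C(0,N^{\delta})$), and check via Stirling that $\Poisson^m(K_N=S_N)=e^{o(N^{2\delta})}$ so that the conditioning is invisible at speed $N^{2\delta}$. Your upper bound is exactly the paper's (implicit) argument and is fine. The paper dispatches the remaining transfer with ``it is not hard to conclude, using the fact that $P$ itself is of intensity $m$'' and a pointer to \cite[Section 7.2]{LebSer}, so the lower bound is precisely where your attempt to be explicit must be examined.

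There is a genuine gap in that lower bound: the intensity-pinning claim that $\iNxa(\C)\in B(P,\epsilon)$ forces $|K_N-S_N|\leq C\epsilon N^{2\delta}$ is only half true for the topologies the paper fixes. The map $\C\mapsto(\text{number of points in a bounded open set})$ is lower semicontinuous but not continuous for vague convergence, so $Q\mapsto \Esp_Q[\Nn(C_1)]$ is only l.s.c.\ on $\probas(\config)$; closeness of $\iNxa(\C)$ to $P$ therefore yields $K_N\geq (m-o(1))N^{2\delta}$ but no upper bound. Concretely, adding $n$ extra points at a single location $x_0$ modifies $\theta_{z'}\cdot\C$ appreciably only for the $O(1)$-measure set of translations $z'$ for which $x_0-z'$ lies in the window where $d_{\config}$ is sensitive, so it moves $\iNxa(\C)$ by $O(N^{-2\delta})$ in the Wasserstein distance \emph{uniformly in $n$}; hence $\B_{k,N,\delta}(\iNxa\in B(P,\epsilon))>0$ for arbitrarily large $k$ and your coupling (which perturbs the field by $O(|k-S_N|/N^{2\delta})$) cannot handle those terms. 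The natural fallback of discarding non-admissible $k$ by Poisson concentration costs an additive error $e^{-c(\epsilon)N^{2\delta}}$ with $c(\epsilon)\to 0$ as $\epsilon\to 0$, which swamps $e^{-\ERS[P|\Poisson^m]N^{2\delta}}$ for small $\epsilon$ whenever $\ERS[P|\Poisson^m]>0$. The standard repair --- and what the cited references actually do --- is to state the Poisson Sanov theorem for the finer topology of local convergence (testing against local tame functions rather than merely vague convergence), under which the intensity functional \emph{is} continuous; then the two-sided pinning holds and your coupling goes through. As written, with the weak/vague topology declared in Section \ref{sec:configpp}, the key step of your lower bound is false rather than merely unproved, so this needs to be patched.
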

\begin{proof} First, let us replace $\textbf{B}_{S_N,N,\delta}$ by $\Poisson^m$, the law of a Poisson point process of intensity $m$. It follows from \cite[Theorem 3.1]{Georgii1} that
\[
\lim_{\epsilon \t0} \lim_{N \ti} N^{-2\delta} \log \Poisson^m(\{ \iNxa(\C) \in B(P, \epsilon) \}) = - \ERS[P|\Poisson^m]
\]
The probability under $\Poisson^m$ of having $S_N$ points in $C(0,{N^{\delta}})$ is given by $e^{-mN^{2\delta}} \frac{ (mN^{2\delta})^{S_N}}{S_N!}$ and Stirling's formula yields
\[
N^{-2\delta} \log \left( e^{-mN^{2\delta}} \frac{ (mN^{2\delta})^{S_N} }{S_N!} \right) = - m +  \frac{S_N}{N^{2\delta}} \log \frac{mN^{2\delta}}{S_N} + \frac{S_N}{N^{2\delta}} + o(1).
\]
Since $S_N \sim_{N} mN^{2\delta}$ the right-hand side is $o(1)$. It is not hard to conclude that \eqref{Sanovc} holds, using the fact that $P$ itself is of intensity $m$. For more details we refer to \cite[Section 7.2]{LebSer} where a similar result is proven.
\end{proof}

\section{Preliminary considerations on the energy} \label{sec:preliminaries}
\subsection{Monotonicity estimates}
\paragraph{Almost monotonicity in $\eta$ of the local energy.}
The next lemma expresses the fact that the limit $\eta \t0$ in \eqref{REfinite} is almost monotonous.
\begin{lem} \label{lem:monoton1}
Let $(\C, \mu)$ be a neutral electric system with $N$ points and $\Eloc$ be the associated local electric field. We have, for any $0 < \eta < \eta_1 < 1$,
\begin{equation} \label{monoton1}
\left(\int_{\R^2} |\Eloc_{\eta}|^2 + N \log \eta\right) - \left(\int_{\R^2} |\Eloc_{\eta_1}|^2 + N \log \eta_1 \right) \succeq  -N \|\mu\|_{\infty} \eta_1.
\end{equation}
\end{lem}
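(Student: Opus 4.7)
The plan is to expand the energy difference as a product, use that $E_\eta - E_{\eta_1}$ is a sum of gradients compactly supported near each point of $\C$, and reduce everything to a single integration by parts.

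To begin, I would introduce $g_p(x) := f_{\eta_1}(x-p) - f_\eta(x-p)$ for each $p\in\C$, so that $E_\eta - E_{\eta_1} = \sum_p \nabla g_p$. From the definition of the truncation, $g_p$ is non-negative, supported in $B(p,\eta_1)$, constant equal to $\log(\eta_1/\eta)$ on $B(p,\eta)$, equal to $\log(\eta_1/|x-p|)$ on the annulus $\eta \le |x-p| \le \eta_1$, and zero outside $B(p,\eta_1)$. The only features I will need are the boundary values $g_p|_{S(p,\eta)} = \log(\eta_1/\eta)$ and $g_p|_{S(p,\eta_1)} = 0$, together with the $L^1$-estimate $\int g_p\,dx \lesssim \eta_1^2$, obtained by a direct integration in polar coordinates.

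Next, I would expand
\[
\int_{\R^2}\bigl(|E_\eta|^2 - |E_{\eta_1}|^2\bigr) = \sum_p \int \nabla g_p \cdot (E_\eta + E_{\eta_1})
\]
and integrate by parts (no boundary term, since each $g_p$ has compact support). Using the distributional identity $-\Delta f_\eta = \cds(\delta_0 - \delta_0^{(\eta)})$, which comes from the jump of $\partial_r f_\eta$ across the circle $S(0,\eta)$, together with $-\div E = \cds(\C - \mu)$, I obtain $-\div E_\eta = \cds(\nu_\eta - \mu)$ where $\nu_\eta := \sum_q \delta_q^{(\eta)}$, and similarly for $E_{\eta_1}$. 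This yields
\[
\int\bigl(|E_\eta|^2 - |E_{\eta_1}|^2\bigr) = \cds \sum_p \int g_p \Bigl(\sum_q \bigl(\delta_q^{(\eta)} + \delta_q^{(\eta_1)}\bigr) - 2\mu\Bigr).
\]

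Finally, I split the right-hand side into diagonal ($q=p$), off-diagonal ($q\ne p$), and background contributions. The diagonal part evaluates to $\cds \log(\eta_1/\eta)$ per point by the boundary values above, giving $\cds N \log(\eta_1/\eta)$ in total. The off-diagonal part is non-negative because $g_p \ge 0$ and the measures $\delta_q^{(\cdot)}$ are non-negative; this positivity is crucial, since one cannot afford a sign-indefinite error without a separation assumption on the points. The background is bounded below by $-2\cds \|\mu\|_\infty \sum_p \int g_p \gtrsim -N\|\mu\|_\infty \eta_1^2$. Adding the correction $N(\log\eta - \log\eta_1) = -N\log(\eta_1/\eta)$ and using $\log(\eta_1/\eta)\ge 0$ together with $\eta_1^2 \le \eta_1$ for $\eta_1 < 1$ gives the desired lower bound $\succeq -N\|\mu\|_\infty \eta_1$. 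The main delicate point is the distributional identity for $-\Delta f_\eta$; once this is correctly established, the rest is routine.
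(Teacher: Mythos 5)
Your proof is correct: the decomposition $E_\eta-E_{\eta_1}=\sum_p\nabla g_p$, the integration by parts against $-\div E_\eta=\cds(\nu_\eta-\mu)$, the exact evaluation of the diagonal term via the boundary values of $g_p$, the positivity of the off-diagonal term, and the bound $\int g_p=\tfrac{\pi}{2}(\eta_1^2-\eta^2)\preceq\eta_1^2$ for the background are all sound, and the argument is insensitive to whether the monotone quantity carries the factor $\tfrac{1}{\cds}$ in front of the field energy. The paper does not prove this lemma but cites \cite[Lemma 2.3]{PetSer}, whose proof proceeds by essentially the same computation, so your argument is a correct self-contained reconstruction of it.
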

\begin{proof}
This is \cite[Lemma 2.3]{PetSer}.
\end{proof}
Let us note that, integrating by parts, we may re-write $\int_{\R^2} |\Eloc_{\eta}|^2$ as $\int_{\R^2} - \Hloc_{\eta} \Delta \Hloc_{\eta}$ and \eqref{monoton1} is really a monotonicity estimate for $\left(\int_{\R^2} - \Hloc_{\eta} \Delta \Hloc_{\eta} + N \log \eta\right)$ as $\eta$ varies.

\paragraph{A localized monotonicity estimate.}
\begin{lem} \label{lem:monoton3}
Let $(\C, \mu)$ be an electric system and $\Eloc$ be the associated local electric field. Let $R_2 > 10$ and let $\Nin$ be the number of points of $\C$ in $C_{R_2}$. We let also $\Nbou$ be the number of points of $\C$ in $C_{R_2} \backslash C_{R_2-5}$. We have for any $0 < \eta_1 < \eta_0 < 1$,
\begin{multline} \label{monotongen}
\left(\int_{C_{R_2}} |\Eloc_{\eta_1}|^2 - \Nin \log \eta_1 \right) - \left(\int_{C_{R_2}} |\Eloc_{\eta_0}|^2 - \Nin \log \eta_0 \right) \succeq - \Nin ||\mu||_{\infty} \eta_0 \\ +  \Nbou \log \eta_1
+ (\log \eta_1 -1) \int_{C_{R_2} \backslash C_{R_2-5}} \left(|\Eloc_{\eta_0}|^2 + |\Eloc_{\eta_1}|^2\right).
\end{multline}
\end{lem}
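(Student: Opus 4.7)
The plan is to adapt the proof of the global monotonicity (Lemma \ref{lem:monoton1}) by restricting the integration to $C_{R_2}$ and converting the boundary terms into bulk estimates over the annular shell $C_{R_2} \setminus C_{R_2 - 5}$ via an averaging-in-radius trick. I set $\Phi := \sum_{p \in \C}(f_{\eta_0} - f_{\eta_1})(\cdot - p)$, so that $\nab\Phi = \Eloc_{\eta_1} - \Eloc_{\eta_0}$ with $\Phi \geq 0$ and $\supp \Phi \subset \bigcup_p \overline{B(p,\eta_0)}$. The starting point is the pointwise identity
\[
|\Eloc_{\eta_1}|^2 - |\Eloc_{\eta_0}|^2 = 2\,\nab\Phi \cdot \Eloc_{\eta_1} - |\nab\Phi|^2.
\]

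For each $t \in [R_2-5, R_2]$, I integrate this identity over $C_t$ and apply Green's formula to the cross term, using $-\div \Eloc_{\eta_1} = \cds\bigl(\C^{(\eta_1)} - \mu\bigr)$ where $\C^{(\eta)} = \sum_p \delta_p^{(\eta)}$ is the smeared configuration. The standard global computation (following the proof of Lemma \ref{lem:monoton1}) shows that for every point $p$ whose truncation ball satisfies $\overline{B(p,\eta_0)} \subset C_t$, the contributions coming from $2\cds \int \Phi\, d\C^{(\eta_1)}$ and from $-\int |\nab\Phi|^2$ combine to produce exactly $-\log(\eta_1/\eta_0)$; the off-diagonal cross-terms between different points are nonnegative and can be discarded when seeking a lower bound. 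Summing over such interior points (which include all points of $\C \cap C_{R_2-5}$ since $\eta_0 < 1 < 5$) and bounding $\int \Phi \, \mu \lesssim \Nin \|\mu\|_\infty \eta_0^2 \leq \Nin \|\mu\|_\infty \eta_0$, I recover the main term $-\Nin \log(\eta_1/\eta_0)$, with a deficit of at most $\Nbou \log(\eta_0/\eta_1) \geq \Nbou \log \eta_1$ coming from the boundary-layer points whose truncation balls may cross $\p C_t$.

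It remains to bound the boundary integral $\int_{\p C_t} \Phi \, \Eloc_{\eta_1} \cdot \n$ (and its twin obtained by swapping the roles of $\eta_0,\eta_1$ in the pointwise identity). A slice-by-slice estimate is too crude, so I average over $t \in [R_2-5, R_2]$: by Fubini,
\[
\frac{1}{5}\int_{R_2-5}^{R_2}\int_{\p C_t} \bigl(|\Eloc_{\eta_0}|^2 + |\Eloc_{\eta_1}|^2\bigr)\, d\sigma\, dt \;\leq\; \int_{C_{R_2}\setminus C_{R_2-5}} \bigl(|\Eloc_{\eta_0}|^2 + |\Eloc_{\eta_1}|^2\bigr).
\]
Combined with Cauchy--Schwarz and the crude pointwise bound $\|\Phi\|_\infty \lesssim |\log \eta_1|$, a mean-value argument produces some $t_\star \in [R_2-5, R_2]$ for which the boundary term is dominated by $|\log\eta_1|\int_{C_{R_2}\setminus C_{R_2-5}} \bigl(|\Eloc_{\eta_0}|^2 + |\Eloc_{\eta_1}|^2\bigr)$. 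Passing from $C_{t_\star}$ back to $C_{R_2}$ finally costs an extra $\int_{C_{R_2}\setminus C_{t_\star}}\bigl(|\Eloc_{\eta_0}|^2 + |\Eloc_{\eta_1}|^2\bigr)$, which accounts for the ``$-1$'' of the factor $(\log\eta_1 - 1)$ in the statement.

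The main obstacle is the bookkeeping in the second paragraph: one must correctly isolate the cancellations for interior points, control contributions from points $p \notin C_{R_2}$ whose truncation balls intersect $\p C_t$ (these are dominated by the $L^2$ mass of $\Eloc_{\eta_j}$ near $\p C_{R_2}$ and hence fit into the last term), and verify that the annular width $5$ is strictly larger than the maximal truncation radius $\eta_0 < 1$, which is exactly why the constant $5$ appears in $C_{R_2-5}$.
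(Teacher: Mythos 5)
Your strategy is the right one, and it matches the proof the paper actually points to: the paper does not prove this lemma itself but defers entirely to the proof of \cite[Lemma 2.4]{PetSer}, which proceeds essentially as you describe (localize the global monotonicity computation of Lemma \ref{lem:monoton1} by integrating by parts over $C_t$, then use a mean-value argument in $t$ over the width-$5$ shell to control the boundary flux, paying $\Nbou \log\eta_1$ for the points whose truncation balls meet $\partial C_t$ and $\log\eta_1 \int_{C_{R_2}\setminus C_{R_2-5}}(|\Eloc_{\eta_0}|^2+|\Eloc_{\eta_1}|^2)$ for the flux itself). Your accounting of where each error term comes from, including the ``$-1$'' from passing from $C_{t_\star}$ back to $C_{R_2}$, is consistent with that argument, and your claim about the interior cancellations is correct once one notes the identity $2\cds\int\Phi\, d\C^{(\eta_1)}-\int|\nabla\Phi|^2=\cds\int\Phi\,(d\C^{(\eta_1)}+d\C^{(\eta_0)})$ (up to boundary terms), which exhibits the off-diagonal contributions as manifestly nonnegative; taken separately, the cross terms of $-\int|\nabla\Phi|^2$ have no sign.

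There is, however, one genuine gap: the ``crude pointwise bound'' $\|\Phi\|_\infty \preceq |\log\eta_1|$ is false in general. Each summand $(f_{\eta_0}-f_{\eta_1})(\cdot-p)$ is indeed bounded by $\log(\eta_0/\eta_1)\leq|\log\eta_1|$, but at a point $x$ lying within distance $\eta_0$ of $k$ points of $\C$ one only gets $\Phi(x)\leq k\log(\eta_0/\eta_1)$, and nothing in the hypotheses prevents arbitrarily many points from clustering at scale $\eta_0$ near $\partial C_t$. Since this bound is your only source of the factor $\log\eta_1$ in front of $\int_{C_{R_2}\setminus C_{R_2-5}}(|\Eloc_{\eta_0}|^2+|\Eloc_{\eta_1}|^2)$ when estimating $\int_{\partial C_t}\Phi\,\Eloc_{\eta_1}\cdot\vec{n}$, the boundary-term estimate does not close as written; the same multiplicity problem reappears if one routes the estimate through $\int_{\partial C_t}\Phi^2$ via Cauchy--Schwarz or Young, since $\int\Phi^2$ involves the number of pairs of $\eta_0$-close points, which is not controlled by $\Nin$ or $\Nbou$. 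Handling this boundary contribution without a pointwise bound on $\Phi$ is precisely the delicate point of the cited proof, and it is the one step your sketch does not actually supply. (A minor remark: the signs $-\Nin\log\eta$ in the displayed statement are typos for $+\Nin\log\eta$ --- compare Lemma \ref{lem:monoton1} and the way \eqref{monotongen} is invoked in the proof of Proposition \ref{prop:LDPUB} --- and you are implicitly, and correctly, proving the $+$ version.)
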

\begin{proof}
It follows from the proof of \cite[Lemma 2.4]{PetSer}, see e.g. \cite[Equation 2.29]{PetSer} and the one immediatly after.
\end{proof}

\paragraph{Almost monotonicity with no points near the boundary.}
\begin{lem} \label{lem:monoton2}
Let $(\C,\mu)$ be an electric system in $\R^2$ and $\Eloc$ be the associated local electric field. Let $0 < R_2$ and let $0 <\eta_1 < 1$ be such that the smeared out charges at scale $\eta_1$ do not intersect $\partial  C_{R_2} $ i.e. $
\bigcup_{p \in \C} B(p, \eta_1) \cap \partial C_{R_2} = \emptyset.$
Let us denote by $\Nin$ the number of points in $C_{R_2}$. Then we have for any $\eta \leq \eta_1$
\begin{equation} \label{monoton2}
\left(\int_{C_{R_2}} |\Eloc_{\eta}|^2 +  \Nin \log \eta\right)  - \left(\int_{C_{R_2}} |\Eloc_{\eta_1}|^2 +  \Nin \log \eta_1\right) \succeq - \Nin \eta_1 ||\mu||_{\infty}
\end{equation}
\end{lem}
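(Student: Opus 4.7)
The plan is to adapt the proof of Lemma \ref{lem:monoton1} by localizing the computation inside $C_{R_2}$ thanks to the boundary hypothesis.

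First I observe that the hypothesis $\bigcup_{p \in \C} B(p, \eta_1) \cap \partial C_{R_2} = \emptyset$ forces each ball $B(p, \eta_1)$ to be either entirely contained in $C_{R_2}$ or disjoint from $C_{R_2}$. Consequently, points $p \in C_{R_2}$ satisfy $B(p, \eta_1) \subset C_{R_2}$, while points $p \notin C_{R_2}$ satisfy $B(p, \eta_1) \cap C_{R_2} = \emptyset$. Since
\[
\Eloc_\eta - \Eloc_{\eta_1} = -\sum_{p \in \C} \nabla(f_\eta - f_{\eta_1})(\cdot - p),
\]
and each summand is supported in $B(p, \eta_1)$, only the $\Nin$ points of $\C \cap C_{R_2}$ contribute to the restriction of this difference to $C_{R_2}$.

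Next I expand the difference of squares and integrate by parts on $C_{R_2}$:
\[
\int_{C_{R_2}} \left(|\Eloc_\eta|^2 - |\Eloc_{\eta_1}|^2\right) = -\int_{C_{R_2}} (\Eloc_\eta + \Eloc_{\eta_1}) \cdot \sum_{p \in \C \cap C_{R_2}} \nabla(f_\eta - f_{\eta_1})(\cdot - p).
\]
The boundary terms on $\partial C_{R_2}$ vanish because, by the previous dichotomy, each $(f_\eta - f_{\eta_1})(\cdot - p)$ appearing in the sum is supported in $B(p, \eta_1) \subset C_{R_2}$ and hence vanishes on $\partial C_{R_2}$. Using $-\nabla \cdot \Eloc_\eta = \cds(\sum_q \delta^{(\eta)}_q - \mu)$ and the analogous identity at scale $\eta_1$, the integral reduces to a sum of pairwise interactions $\int (f_\eta - f_{\eta_1})(\cdot - p)\, d\delta^{(\eta')}_q$ for $p \in \C \cap C_{R_2}$, together with a background term involving $\mu$.

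From this point the computation is formally identical to the one underlying Lemma \ref{lem:monoton1} (i.e.\ \cite[Lemma 2.3]{PetSer}): the diagonal contributions $p = q$ produce the dominant quantity that cancels the $\Nin(\log \eta - \log \eta_1)$ term in \eqref{monoton2}; the off-diagonal contributions either vanish (when $B(p,\eta_1)$ and $B(q,\eta_1)$ are disjoint) or carry the favorable sign; and the background term is bounded by $\Nin \|\mu\|_\infty \eta_1$ by a volume estimate on $\bigcup_{p \in \C \cap C_{R_2}} B(p, \eta_1)$. I expect the main obstacle to be purely book-keeping: tracking signs, and verifying that off-diagonal cross terms with $q \notin C_{R_2}$ contribute nothing, which again follows from the boundary hypothesis since the smeared-out charge $\delta^{(\eta')}_q$ then lies outside $C_{R_2}$ while the paired test function is supported inside. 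No new ingredient is needed beyond this clean localization.
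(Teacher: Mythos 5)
Your proof is correct, and it implements the same underlying idea as the paper's --- namely that the hypothesis on the smeared charges confines all $\eta$-dependence of $\Eloc_\eta$ on $C_{R_2}$ to the charges lying inside --- but via a different decomposition. The paper splits the field by \emph{sources}, writing $\Eloc = \Ein + \Eou$ with $\Ein$ generated by the interior system; it observes that $\Eou_\eta = \Eou$ on $C_{R_2}$ for $\eta \leq \eta_1$, so that the exterior and cross contributions to $\int_{C_{R_2}}|\Eloc_\eta|^2$ are independent of $\eta \leq \eta_1$, and then quotes the global almost-monotonicity of $-\int \Hin_\eta \Delta \Hin_\eta + \Nin \log \eta$ (Lemma \ref{lem:monoton1} applied to the interior system) as a black box. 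You instead split by \emph{truncation scale}: the difference $\Eloc_\eta - \Eloc_{\eta_1} = -\sum_p \nabla(f_\eta - f_{\eta_1})(\cdot - p)$ restricted to $C_{R_2}$ only involves the $\Nin$ interior points, each summand being supported in a ball $B(p,\eta_1) \subset C_{R_2}$; factoring the difference of squares, integrating by parts with no boundary contribution, and using the divergence constraint then reduces everything to the pairwise computation of \cite[Lemma 2.3]{PetSer}, which you redo explicitly (diagonal terms cancel $\Nin(\log\eta - \log\eta_1)$, off-diagonal terms have the favorable sign since $f_\eta \leq f_{\eta_1}$, the background contributes $O(\Nin\|\mu\|_\infty \eta_1^2)$). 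Your route avoids introducing the auxiliary objects $\Ein$, $\Eou$, $\Hin$ and the attendant boundary terms on $\partial C_{R_2}$, at the price of unpacking the monotonicity computation rather than citing it; both are complete and of comparable length.
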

\begin{proof}
We postpone the proof to Section \ref{sec:preuvemonot}.
\end{proof}

\subsection{Discrepancy estimates}
\begin{lem} \label{lem:discr}
Let $N \geq 1$, and let $\C$ be a finite point configuration in $\R^2$. Let $E$ be a gradient electric field in $\Elec(\C, \mupeq)$. For any $R > 0$, let $\D_R$ be the discrepancy $\D_R := \int_{C_R} (d\C - d\mupeq)$ in $C_R$. 

For any $\eta \in (0,1)$ we have
\begin{equation} \label{estimdiscr}
\D^2_R \min \left(1, \frac{\D_R}{R^2} \right) \preceq \int_{C_{2R}} |E_{\eta}|^2.
\end{equation}
\end{lem}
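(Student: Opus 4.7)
The plan is a classical shell estimate using the divergence theorem and Cauchy--Schwarz, combined with a dichotomy on the size of $|\D_R|$ relative to $R^2$.

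Without loss of generality I assume $\D := \D_R \geq 0$ (the opposite sign is handled by shrinking $C_R$ rather than enlarging). Since $E$ is a gradient with $-\div E = 2\pi(\C - \mupeq)$, for a.e.\ $t$ --- namely those for which $\partial C_t$ contains no point of $\C$ --- the divergence theorem yields $\int_{\partial C_t} E \cdot \nu \, d\sigma = -2\pi \D_t$, where $\D_t := \int_{C_t}(d\C - d\mupeq)$. Cauchy--Schwarz on $\partial C_t$ (of perimeter $\leq 8t$) then produces
\[
\D_t^2 \preceq t \int_{\partial C_t} |E|^2 \, d\sigma. \quad (\star)
\]
To keep $\D_t$ comparable to $\D$ on an interval of $t$'s, I note that $\D_t \geq \D - 3R\tau \|\meq\|_\infty$ for $t \in [R, R+\tau]$ with $\tau \leq R$, so choosing $\tau_\star := \min\{R,\, c\D/R\}$ with $c$ small enough gives $\D_t \geq \D/2$ throughout $[R, R+\tau_\star]$.

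The next step is to replace $|E|$ by $|E_\eta|$ in $(\star)$. Since $E = E_\eta$ outside $\bigcup_{p \in \C} B(p,\eta)$, the two fields coincide on every circle $\partial C_t$ avoiding those balls. For each $p$, the set of $t$'s with $\partial C_t \cap B(p,\eta) \neq \emptyset$ has Lebesgue measure at most $4\eta$, so the total ``bad'' $t$-measure in $[R, R+\tau_\star]$ is at most $4\eta$ times the number of points of $\C$ in the annular strip of width $\tau_\star+\eta$ around $\partial C_R$. When this bad set covers at most half of $[R, R+\tau_\star]$, integrating $(\star)$ over the complementary good $t$'s and using the Fubini identity $\int_R^{R+\tau}\!\int_{\partial C_t} f\, d\sigma\, dt \leq \int_{C_{R+\tau}\setminus C_R} f$ produces
\[
\tau_\star \D^2 \preceq R \int_{C_{2R}} |E_\eta|^2.
\]
Substituting $\tau_\star = R$ when $\D \gtrsim R^2$ gives $\D^2 \preceq \int |E_\eta|^2$; substituting $\tau_\star = c\D/R$ when $\D \lesssim R^2$ gives $\D^3/R^2 \preceq \int |E_\eta|^2$; together these are precisely $\D^2 \min(1, \D/R^2) \preceq \int_{C_{2R}} |E_\eta|^2$.

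The main obstacle, and where the bulk of the technical care lies, is the residual case in which many points of $\C$ cluster near $\partial C_R$, so that the ``bad'' $t$'s swallow most of $[R, R+\tau_\star]$. One resolves this either by observing that such a cluster already forces $\int_{C_{2R}} |E_\eta|^2$ to be large --- each smeared charge contributing $\Omega(|\log \eta|)$ via the local monotonicity of Lemma~\ref{lem:monoton2} --- or by refining the choice of good $t$-slices through a pigeonhole argument calibrated to the local point count. Balancing these two regimes is the delicate part of the proof.
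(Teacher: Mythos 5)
The paper does not give its own argument here (it cites \cite[Lemma 3.8]{RougSer}), and your overall strategy --- divergence theorem on shells, Cauchy--Schwarz on $\partial C_t$, integration over $t$ in a window of width $\tau_\star = \min(R, c\,|\D_R|/R)$, and the dichotomy $|\D_R| \gtrless R^2$ --- is exactly the strategy of that reference. The arithmetic at the end (substituting the two values of $\tau_\star$) is correct.

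However, there is a genuine gap in how you pass from $E$ to $E_\eta$. You apply the divergence theorem to the untruncated field $E$ and then try to restrict to circles $\partial C_t$ avoiding the balls $B(p,\eta)$, which forces you into the ``residual case'' where points of $\C$ cluster near $\partial C_R$ and the bad $t$-set swallows the window. You do not actually resolve this case: of the two fixes you sketch, the first (each smeared charge contributes $\Omega(|\log\eta|)$ to $\int|E_\eta|^2$) degenerates completely as $\eta \to 1$, since $|\log\eta|\to 0$ while the required lower bound $\D_R^2\min(1,\D_R/R^2)$ does not shrink --- and in any case a large energy does not by itself yield the \emph{specific} lower bound needed; the second (a pigeonhole ``calibrated to the local point count'') is not specified. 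As written, the proof is incomplete precisely where you say the delicate part lies.

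The standard way out, and the one used in the cited lemma, removes the difficulty entirely: run the divergence theorem on $E_\eta$ itself. By construction $-\div E_\eta = 2\pi\bigl(\nu_\eta - \mupeq\bigr)$ with $\nu_\eta = \sum_{p\in\C}\delta_p^{(\eta)}$ the smeared point measure, so $E_\eta$ is in $L^2$ on \emph{every} circle and
\[
\int_{\partial C_t} E_\eta\cdot\nu\,d\sigma \;=\; -2\pi\Bigl(\int_{C_t} d\nu_\eta - \int_{C_t} d\mupeq\Bigr)
\quad\text{for a.e.\ } t .
\]
Since $\eta<1$, every smeared charge centered in $C_R$ lies entirely inside $C_t$ once $t\ge R+1$, so $\int_{C_t} d\nu_\eta \ge \int_{C_R} d\C$ and hence the smeared discrepancy at radius $t$ is $\ge \D_R - C R(t-R)\|\meq\|_\infty$ for $t\in[R+1,\,R+1+\tau_\star]$ (with the symmetric shrinking for $\D_R<0$). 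Your Cauchy--Schwarz and coarea steps then apply verbatim to $E_\eta$ on this shifted window, with no exceptional set of $t$'s and no clustering case to handle. I recommend you restructure the proof this way; the rest of your argument then closes.
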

\begin{proof}
This follows from \cite[Lemma 3.8]{RougSer}.
\end{proof}

As a corollary, we see that if a good control holds at scale $\delta$, then the discrepancies are controlled at smaller scales.
\begin{lem} \label{lem:discr2}
Let $0 < \delta \leq \frac{1}{2}$ and let us assume that a good control holds at scale $\delta$. Then for any $R \in  (\frac{1}{2} N^{\deltap}, 2 N^{\deltap})$ with $\frac{\delta}{2} < \deltap < \delta$ we have
\begin{equation} \label{discr-contr}
|\D_R| \ll_{\deltap} N^{\frac{4}{3} \delta}.
\end{equation}
\end{lem}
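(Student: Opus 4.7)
The plan is to combine the abstract discrepancy--energy inequality of Lemma \ref{lem:discr} with the energy bound furnished by the good control at scale $\delta$, applied to the local electric field $\Eloc$ associated to $\nu'_N$ and $\mupeq$. Since $\Eloc = \nabla \Hloc$ is a gradient field lying in $\Elec(\nu'_N, \mupeq)$, applying Lemma \ref{lem:discr} to the square $C(z_0',R)$ (translating the statement, which is invariant by translation) with a fixed truncation parameter, for instance $\eta = 1/2$, gives
\begin{equation*}
\D_R^2 \min\!\left(1, \frac{|\D_R|}{R^2}\right) \preceq \int_{C(z_0', 2R)} |\Eloc_{1/2}|^2.
\end{equation*}

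Next I would invoke the good control hypothesis. Because $R \leq 2 N^{\deltap}$ and $\deltap < \delta$, one has $C(z_0', 2R) \subset C(z_0', N^\delta)$ for every $N$ large enough. Applying \eqref{goodcont1} with $\eta = 1/2$, combined with the bound \eqref{goodcont2} on the number $\mathcal{N}^{z_0}_\delta$ of points in $C(z_0', N^\delta)$, yields, with $\deltap$-overwhelming probability,
\begin{equation*}
\int_{C(z_0', 2R)} |\Eloc_{1/2}|^2 \preceq_{\deltap} N^{2\delta} + (\log 2)\, \mathcal{N}^{z_0}_\delta \preceq_{\deltap} N^{2\delta}.
\end{equation*}
Combining the two displays, I obtain $\D_R^2 \min(1, |\D_R|/R^2) \preceq_{\deltap} N^{2\delta}$.

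The last step is a case analysis on the value of the minimum. If $|\D_R| \geq R^2$, the inequality reduces to $\D_R^2 \preceq_{\deltap} N^{2\delta}$, giving $|\D_R| \preceq_{\deltap} N^{\delta}$; this contradicts $|\D_R| \geq R^2 \geq N^{2\deltap}/4$ as soon as $N$ is large, because the hypothesis $\deltap > \delta/2$ forces $2\deltap > \delta$. Hence only the case $|\D_R| < R^2$ survives, in which the inequality becomes
\begin{equation*}
|\D_R|^3 \preceq_{\deltap} R^2 N^{2\delta} \preceq N^{2(\deltap + \delta)},
\end{equation*}
so $|\D_R| \preceq_{\deltap} N^{2(\deltap + \delta)/3}$. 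Writing $2(\deltap + \delta)/3 = 4\delta/3 - 2(\delta - \deltap)/3$ and using $\delta > \deltap$ yields $|\D_R| \ll_{\deltap} N^{4\delta/3}$ with explicit gain $\tau = 2(\delta - \deltap)/3 > 0$.

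The only subtle point in the argument is the interplay between the two thresholds on $\deltap$: the lower bound $\deltap > \delta/2$ is precisely what is needed to discard the ``large discrepancy'' regime $|\D_R| \geq R^2$, while the upper bound $\deltap < \delta$ provides the polynomial slack that turns $\preceq_{\deltap}$ into $\ll_{\deltap}$. Beyond tracking these exponents correctly I do not anticipate any real obstacle, since Lemma \ref{lem:discr} packages the nontrivial analytic content and the good control assumption directly supplies the needed energy bound.
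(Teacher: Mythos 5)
Your proof is correct and follows essentially the same route as the paper: apply Lemma \ref{lem:discr} with $\eta = 1/2$ to the local electric field, bound the right-hand side by $N^{2\delta}$ via the good control \eqref{goodcont1}--\eqref{goodcont2}, and then resolve the minimum (the paper leaves this as ``elementary considerations,'' which your case analysis makes explicit). The exponent bookkeeping $2(\deltap+\delta)/3 < 4\delta/3$ matches the paper's conclusion exactly.
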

\begin{proof}
Let us apply Lemma \ref{lem:discr} with $\eta = 1/2$, taking $E$ to be the local electric field $\Eloc$. It yields
\[
\D^2_R \min \left(1, \frac{\D_R}{R^2} \right) \preceq \int_{C_{2R}} |\Eloc_{1/2}|^2.
\]
Using the good control on the energy at scale $\delta$ (in the sense of Definition \ref{def:goodcontrol}) we have $\int_{C_{2R}} |\Eloc_{1/2}|^2 + \Nn_{2R} \log (1/2) \preceq N^{2\delta}$, and $\Nn_{2R}$ is itself $\preceq_{\deltap} N^{2 \delta}$. We thus obtain that $
\D^2_R \min \left(1, \frac{\D_R}{R^2} \right) \preceq_{\deltap} N^{2\delta}$. Then, elementary considerations imply that if $\frac{\delta}{2} < \deltap < \delta$, we have $\D^3_R \preceq_{\deltap} N^{2(\deltap +\delta)}$ which yields \eqref{discr-contr}.
\end{proof}

\paragraph{Application: number of points in a square.}
\begin{lem} \label{lem:discr3}
Let $0 < \delta \leq \frac{1}{2}$ and let us assume that a good control holds at scale $\delta$. Then we have, for any $R \in  (\frac{1}{2} N^{\deltap}, 2 N^{\deltap})$ with $ \frac{2}{3} \delta < \deltap < \delta$, and for any $z_0 \in \mathring{\Sigma}$, letting $\Nnz_R := \int_{C(z'_0, R)} d\C$
\begin{equation} \label{nbpointsNR}
\left| \Nnz_R - \meq(z_0) R^2 \right| \ll_{\deltap} N^{2\deltap}.
\end{equation}
\end{lem}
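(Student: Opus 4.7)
The plan is to decompose $\Nnz_R - \meq(z_0) R^2$ into the discrepancy $\D_R := \Nnz_R - \int_{C(z_0', R)} d\mupeq$ and the deterministic error $\int_{C(z_0', R)} d\mupeq - \meq(z_0) R^2$, and control each term separately. The discrepancy is handled by the estimate already at hand (Lemma \ref{lem:discr2}), while the deterministic piece is controlled by Hölder continuity of $\meq$ combined with the change of variables $y = N^{-1/2} x$.

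First I would apply Lemma \ref{lem:discr2} to $C(z_0', R)$ with $R \in (\tfrac{1}{2} N^{\deltap}, 2 N^{\deltap})$. Since the hypothesis $\tfrac{2}{3}\delta < \deltap < \delta$ is strictly stronger than $\tfrac{\delta}{2} < \deltap < \delta$, we get $|\D_R| \ll_{\deltap} N^{\frac{4}{3}\delta}$. The key numerical observation is that $\deltap > \tfrac{2}{3}\delta$ gives $2\deltap > \tfrac{4}{3}\delta$ \emph{strictly}, so there exists $\tau > 0$ with $N^{\frac{4}{3}\delta} \leq N^{2\deltap - \tau}$, i.e. $|\D_R| \ll_{\deltap} N^{2\deltap}$. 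This is where the threshold $\tfrac{2}{3}\delta$ in the statement comes from, and it is essentially an algebraic matching of exponents rather than a hard step.

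Next I would estimate the deterministic mass of $\mupeq$ on $C(z_0', R)$. Rescaling via $y = N^{-1/2} x$ gives
\begin{equation*}
\int_{C(z_0', R)} d\mupeq(x) = N \int_{C(z_0, R N^{-1/2})} \meq(y)\, dy.
\end{equation*}
Since $\deltap < 1/2$ the radius $R N^{-1/2} \sim N^{\deltap - 1/2}$ tends to $0$, so for $N$ large the small square $C(z_0, R N^{-1/2})$ lies in $\mathring{\Sigma}$ where $\meq$ is defined and $\kappa$-Hölder by Assumption \ref{assumption:V}. Thus
\begin{equation*}
\left| \int_{C(z_0, R N^{-1/2})} \meq(y)\, dy - \meq(z_0) (R N^{-1/2})^2 \right| \preceq (R N^{-1/2})^{2+\kappa},
\end{equation*}
and multiplying by $N$ yields an error bounded by $R^{2+\kappa} N^{-\kappa/2} \preceq N^{2\deltap - \kappa(1/2 - \deltap)}$, which is $\ll N^{2\deltap}$ since $\kappa > 0$ and $\deltap < 1/2$.

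Finally I would combine the two estimates by the triangle inequality,
\begin{equation*}
\left| \Nnz_R - \meq(z_0) R^2 \right| \leq |\D_R| + \left| \int_{C(z_0', R)} d\mupeq - \meq(z_0) R^2 \right| \ll_{\deltap} N^{2\deltap},
\end{equation*}
which is the conclusion. There is no real obstacle here: both ingredients are available, and the only constraint is the numerical balance between the discrepancy exponent $\tfrac{4}{3}\delta$ and the target exponent $2\deltap$, which dictates the assumption $\deltap > \tfrac{2}{3}\delta$.
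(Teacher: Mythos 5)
Your proof is correct and follows essentially the same route as the paper: both split $\Nnz_R - \meq(z_0)R^2$ into the discrepancy (controlled by Lemma \ref{lem:discr2}) and the deterministic error $\int_{C(z'_0,R)} d\mupeq - \meq(z_0)R^2$ (controlled by the H\"older assumption \eqref{assum-Holder}, giving an error of order $N^{2\deltap + \kappa(\deltap - 1/2)} \ll N^{2\deltap}$), and both close the argument with the exponent comparison $2\deltap > \tfrac{4}{3}\delta$. Your write-up is in fact slightly more explicit than the paper's about the rescaling and about why the threshold $\deltap > \tfrac{2}{3}\delta$ appears.
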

\begin{proof}
We have by definition $\int_{C(z'_0, R)} d\mupeq = \int_{C(z'_0, R)} \meq(z_0 + tN^{-1/d}) dt$. Using the Hölder assumption \eqref{assum-Holder} we get $\left| \int_{C(z'_0, R)} d\mupeq - R^2 \meq(z_0)\right| \preceq N^{2\deltap + \kappa (\deltap - \frac{1}{2})}$.
Since $\kappa > 0$ and $\deltap < 1/2$ we get
$\left| \int_{C(z'_0, R)} d\mupeq - R^2 \meq(z_0)\right| \ll N^{2\deltap}$. 
Lemma \ref{lem:discr2} yields $\left|\Nnz_R - \int_{C(z'_0, R)} d\mupeq\right| \ll_{\deltap} N^{\frac{4}{3} \delta}$. 
Combining these two inegalities we see that if $\deltap > \frac{2}{3} \delta$ then \eqref{nbpointsNR} holds.
\end{proof}

\subsection{Minimality of local energy against decaying fields}
\begin{lem}  \label{lem:minilocale} Let $K \subset \R^2$ be a compact set with piecewise $C^1$ boundary. Let $(\C,\mu)$ be a neutral electric system in $K$.
Let  $\Eloc$ be the local electric field associated to $(\C, \mu)$ and let $E \in \Eleco(\C, \mu)$. For any $0<\eta <1$ we have
\begin{equation}\label{comparloc}
\int_{\R^{2}}  |\Eloc_{\eta}|^2 \leq \int_{\R^{2}}  |E_{\eta}|^2.
\end{equation}
\end{lem}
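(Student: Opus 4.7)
The plan is to exploit the fact that $\Eloc_\eta$ is (up to a harmless smooth correction) a gradient while the difference $F := E - \Eloc$ is divergence-free and sufficiently decaying, so that $\int_{\R^2}\Eloc_\eta \cdot F = 0$ by integration by parts. Expanding
\[
|E_\eta|^2 = |\Eloc_\eta + F|^2 = |\Eloc_\eta|^2 + 2\Eloc_\eta \cdot F + |F|^2
\]
then yields \eqref{comparloc} from $|F|^2 \geq 0$. The crux is justifying the orthogonality in spite of the singularities of $E$ and $\Eloc$ at points of $\C$ and the unboundedness of $\R^2$.

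First, I would record the structural properties of $F$. Both $E$ and $\Eloc$ satisfy $-\div = \cds(\C - \mu)$ on $\R^2$, hence $\div F = 0$ distributionally. The divergence condition also forces both fields to have the same singular part $-\nabla \log|x-p|$ at each $p \in \C$ (plus a locally smooth remainder), so $F \in L^2_{\mathrm{loc}}(\R^2, \R^2)$. Since $f_\eta$ is radial, the truncation $\sum_{p\in\C} \nabla f_\eta(\cdot - p)$ is subtracted identically from $E$ and $\Eloc$; thus $E_\eta - \Eloc_\eta = F$ and the expansion above is legitimate.

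Second, I would set up decay estimates at infinity. Writing $\Eloc_\eta = \nabla \Hloc_\eta$ with $\Hloc_\eta := \Hloc - \sum_{p \in \C} f_\eta(\cdot - p)$ and $\Hloc = -\log * (\C - \mu)$, the neutrality assumption cancels the monopole term in the multipole expansion of $\Hloc$, giving $\Hloc(x) = O(|x|^{-1})$ as $|x| \ti$; since the truncation correction is compactly supported, also $\Hloc_\eta(x) = O(|x|^{-1})$ and $\Eloc_\eta(x) = O(|x|^{-2})$. Combined with the assumption $E \in \Eleco$, i.e.\ $E(z) = O(|z|^{-2})$, this yields $F(x) = O(|x|^{-2})$.

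Third, I would integrate by parts on $D(0, R)$:
\[
\int_{D(0,R)} \Eloc_\eta \cdot F \;=\; \int_{D(0,R)} \nabla \Hloc_\eta \cdot F \;=\; \int_{\partial D(0,R)} \Hloc_\eta (F \cdot \nu)\, d\sigma \;-\; \int_{D(0,R)} \Hloc_\eta \div F,
\]
where the bulk term vanishes since $\div F = 0$, and the boundary integrand is $O(R^{-1}) \cdot O(R^{-2}) = O(R^{-3})$ on a circle of length $O(R)$, so the boundary term is $O(R^{-2})$ and tends to $0$ as $R \ti$. This gives $\int_{\R^2} \Eloc_\eta \cdot F = 0$ and the lemma follows. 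The main obstacle is precisely the control at infinity: without neutrality (which upgrades $\Hloc$ from logarithmic to $|x|^{-1}$ decay) together with the $|z|^{-2}$ decay built into $\Eleco$, the boundary integral would fail to vanish (or even fail to be well-defined) and the orthogonality argument would collapse. Both hypotheses are used at exactly that step.
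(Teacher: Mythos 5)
Your proof is correct and takes essentially the same route as the paper: expand $|E_\eta|^2$, discard the nonnegative square of the difference $F = E_\eta - \Eloc_\eta$, and annihilate the cross term by integration by parts, using $\div F = 0$, the gradient structure $\Eloc_\eta = \nabla \Hloc_\eta$, and the decay at infinity supplied by neutrality and by the definition of $\Eleco$. The only (cosmetic) differences are that the paper truncates with a smooth cutoff $\chi_M$ rather than integrating over balls $D(0,R)$, and that it first reduces to the case where the right-hand side of \eqref{comparloc} is finite --- which is also the cleanest way to justify your claim that $F \in L^2_{\mathrm{loc}}$, since divergence-freeness alone does not force $F$ to be locally smooth.
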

The proof is very similar that of \cite[Lemma 3.12]{LebSer} and we postpone it to Section \ref{sec:annexe}.

\subsection{The screening lemma}
\begin{lem} \label{lem:screening}
There exists $C > 0$ universal such that the following holds. 

Let $z \in \R^2$ and for any $N$ let $(\C,\mu)$ be an electric system in $\R^2$, let $0 < \deltatp < \deltapp < \deltap < 1/2$ and let $R_1, R_2$ positive be such that, letting $\eta_1 := N^{-10}$, we have
\begin{enumerate}
\item $R_2 \in [N^{\deltap} + N^{\deltapp}, N^{\deltap} + 2N^{\deltapp}]$ and $R_1 \in [R_2 - 3N^{\deltatp}, R_2 - 2N^{\deltatp}]$.  
\item The smeared out charges at scale $\eta_1$ do not intersect $\partial C(z,R_2)$, i.e. $$\bigcup_{p \in \C}^N D(p, \eta_1) \cap \partial  C(z,R_2) = \emptyset.$$
\item $\Nint$ is an integer, where $\Nint := \int_{C(z,R_1)} d\mu$.
\item Letting $\Nmid := \int_{C(z,R_2) \backslash C(z,R_1)} d\C$, it holds $\Nmid \ll N^{2\deltap}$.
\end{enumerate}

Let us assume that $\mu$ satisfies $0 < \um \leq \mu \leq \om$ on $C_{R_2} \backslash C_{R_1}$ and that furthermore there exists $\Chol > 0$ such that
\begin{equation} \label{assum:muHolder}
\forall (x,y) \in (C_{R_2} \backslash C_{R_1})^2, |\mu(x) - \mu(y)| \leq \Chol |x-y|^{\kappa},
\end{equation}
where $\kappa$ is as in \eqref{assum-Holder}.
Let $E$ be in $\Elec(\C, \mu, \R^2 \backslash C_{R_1})$ and let $M := \int_{\partial C_{R_2}} |E_{\eta}|^2$.

If the following inequality is satisfied
\begin{equation} \label{scrineg}
M \leq C \min(\um^2,1) N^{3\deltatp},
\end{equation}
then there exists a measurable family $\AtranN$ of point configurations such that for any $\Ctran \in \AtranN$ 
\begin{enumerate} 
\item The configuration $\Ctran$ is supported in $C_{R_2} \backslash C_{R_1}$.
\item The configuration $\Ctran$ has $\Next$ points where (with $\vec{n}$ the unit normal vector)
\begin{equation} \label{def:Next}
\Next := \int_{\partial C_{R_2}} E_{\eta_1} \cdot \vec{n} - \int_{C_{R_2} \backslash C_{R_1}} d\mu.
\end{equation}
\item The points of $\Ctran$ are well-separated from each other and from the boundaries
\begin{equation} \label{Cextbiensep}
\min_{p_1 \neq p_2 \in \Ctran} |p_1 - p_2| \succeq \om^{-1/2}, \quad \min_{p \in \Ctran} \dist(p, \partial C_{R_2} \cup \partial C_{R_1}) \succeq \om^{-1/2}.
\end{equation}
\item There exists an electric field $\Etran \in \Elec(\Ctran, \mu, C_{R_2} \backslash C_{R_1})$ such that
\begin{enumerate}
\item We have 
\begin{equation} \label{Eextaubord}
\Etran_{\eta_1} \cdot \vec{n} = \begin{cases} E_{\eta_1} \cdot \vec{n} & \text{ on } \partial C_{R_2} \\
0 & \text{ on } \partial C_{R_1}
\end{cases}.
\end{equation}
\item The energy of $\Etran$ is bounded by
\begin{equation} \label{controleenergieEext}
\int_{C_{R_2} \backslash C_{R_1}} |\Etran_{\eta_1}|^2 \preceq N^{\deltatp} M + N^{\deltap + 3\deltatp} \Chol N^{\kappa \deltatp} + N^{\deltap + \deltatp} \log N.
\end{equation}
\end{enumerate}
\end{enumerate}

Moreover the volume of $\AtranN$ is bounded below as follows
\begin{equation} \label{AextNvol}
\log \Leb^{\otimes \Next}(\AtranN) \succeq - (\om N^{\deltap+\deltatp} + N^{\deltap} + M) \log \om. 
\end{equation}

\end{lem}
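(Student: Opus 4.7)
The plan is to subdivide the annulus $C_{R_2}\setminus C_{R_1}$ into a mesoscopic grid of square cells of sidelength $\ell\asymp N^{\deltatp}$ and to build $\Etran$ cell by cell, by prescribing in each cell an integer number of points and solving a local Neumann problem whose boundary data is $E_{\eta_1}\cdot\vec{n}$ on $\partial C_{R_2}$, $0$ on $\partial C_{R_1}$, and $0$ on each interior edge. Since the annulus has width $\asymp N^{\deltatp}$ and perimeter $\asymp N^{\deltap}$, this produces $\asymp N^{\deltap-\deltatp}$ cells.

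The first step is to select a specific grid (by translating in a window of width $\ell$) so that three properties hold simultaneously: (i) no smeared charge at scale $\eta_1$ crosses any grid line; (ii) the boundary flux is well distributed, in the sense that $\int_{K\cap\partial C_{R_2}}|E_{\eta_1}|^2\preceq M\ell/N^{\deltap}$ for every cell $K$ touching $\partial C_{R_2}$; and (iii) the integer charges $N_K$ defined below are non-negative and at most $\om\ell^2(1+o(1))$. Such a grid exists by a Fubini/Markov-type averaging over grid translations, and the smallness hypothesis \eqref{scrineg} on $M$ (together with the control $\Nmid\ll N^{2\deltap}$ of the pre-existing charge in the annulus and the two-sided bound $\um\leq\mu\leq\om$) is exactly what ensures that (ii)--(iii) can be arranged simultaneously. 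For each cell $K$ we set $g_K:=E_{\eta_1}\cdot\vec{n}$ on $\partial K\cap\partial C_{R_2}$, $g_K:=0$ on $\partial K\cap\partial C_{R_1}$ and $g_K:=0$ on interior edges (up to redistributing one unit of rounding error per cell to a neighbor), and we let $N_K$ be the nearest integer to $\int_K d\mu+\oint_{\partial K}g_K$; the collection $\{N_K\}$ sums to the prescribed total $\Next$ of \eqref{def:Next}.

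In each cell, place the $N_K$ points on a regular sublattice of spacing and boundary distance $\succeq\om^{-1/2}$, which gives the separation \eqref{Cextbiensep}, and solve the Neumann problem $-\Delta h_K=2\pi(\Ctran_{|K}-\mu_{|K})$ in $K$ with $\partial_{\vec{n}}h_K=g_K$ on $\partial K$; this is well-posed by construction of $N_K$. Setting $\Etran:=\nabla h_K$ on each $K$, the matching of normal components across interior edges makes $\Etran$ lie in $\Elec(\Ctran,\mu,C_{R_2}\setminus C_{R_1})$ and yields \eqref{Eextaubord}. Standard $L^2$ elliptic estimates on a square of side $\ell$, together with the self-energy computation for a regular array balanced against a constant background of density $\bar\mu_K$ and a first-order expansion in the H\"older perturbation $\mu-\bar\mu_K$, yield per-cell contributions of order $\ell\int_{\partial K}|g_K|^2$, $\ell^2\log N$ and $\Chol\,\ell^{4+\kappa}$ respectively. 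Summing over the $\asymp N^{\deltap-\deltatp}$ cells reproduces exactly the three terms $N^{\deltatp}M$, $N^{\deltap+\deltatp}\log N$ and $\Chol N^{\deltap+3\deltatp+\kappa\deltatp}$ of \eqref{controleenergieEext}. Finally, the volume estimate \eqref{AextNvol} is obtained by letting each of the $\Next\asymp\om N^{\deltap+\deltatp}+O(M+N^{\deltap})$ points wiggle in a region of area $\asymp\om^{-1}$ around its lattice position.

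The main obstacle is really the grid selection in step one: the grid must avoid the smeared charges, distribute the boundary energy $M$ proportionally to edge length, and leave a non-negative integer number of points in every cell after rounding. The smallness assumption \eqref{scrineg} on $M$ is precisely calibrated so that a pigeonhole argument over grid translations is compatible with the energy accounting of the third paragraph. The remaining ingredients---elliptic regularity on squares, self-energy of a regular array, and the first-order H\"older expansion---are standard and follow the screening procedure initiated in \cite{SS2d,SS1d,RougSer,PetSer}.
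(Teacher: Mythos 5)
Your proposal follows essentially the same route as the paper: tile the transition annulus into cells of sidelength $N^{\deltatp}$, correct the background density in each cell by the incoming flux so that the cell charge is a non-negative integer (this is exactly where \eqref{scrineg} enters), solve local Neumann problems, place the points on a regular array of spacing $\om^{-1/2}$, and wiggle each point in a disk of area $\asymp \om^{-1}$ for the volume bound, with the same term-by-term energy accounting as \eqref{controleenergieEext}. The one caveat is your grid-selection requirement (ii): equidistribution of the boundary energy among cells cannot be arranged by translating the grid (the energy of $E_{\eta_1}$ on $\partial C_{R_2}$ could be concentrated on an arc shorter than a cell), but it is also not needed, since a per-cell Cauchy--Schwarz estimate $\bigl|\int_{\partial K \cap \partial C_{R_2}} E_{\eta_1}\cdot \vec{n}\bigr| \preceq \ell^{1/2} M^{1/2}$ combined with \eqref{scrineg} already guarantees $N_K \geq 0$, and the summed energy bound $\ell \sum_K \int_{\partial K}|g_K|^2 = \ell M$ holds regardless of how the flux is distributed.
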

\begin{proof}
The result is inspired from the “screening lemmas” of \cite[Prop. 6.4]{SS2d}, \cite[Prop 6.1]{PetSer} and \cite[Prop 5.2]{LebSer}. Our setting is slightly simpler because we have ruled out the possibility of having point charges close to the boundary $\partial C_{R_2}$. Here we screen the electric field “from the inside” (a similar procedure is used in \cite{Rot-NodSer}) whereas the aforementioned screening results were constructing a field $E$ such that $E \equiv 0$ outside a certain hypercube.  Another difference is that in the present lemma we really need to deal with a variable background $\mu$. 

In the rest of the proof we set $l = N^{\deltatp}$.
\\
\textbf{Step 1.} \textit{Subdividing the domain.}
We claim that we may split $C_{R_2} \backslash C_{R_1}$ into a finite family of rectangles $\{H_i\}_{i \in I}$ with sidelengths in $[l/2, 2l]$ such that letting
$\bar{m}_i := \frac{1}{|H_i|} \int_{H_i} d\mu$ and 
\begin{equation}
\label{def:mi} m_i := \bar{m}_i + \frac{1}{|H_i|} \frac{1}{\cds} \int_{\partial C_{R_2} \cap \partial H_i}  E_{\eta} \cdot \vec{n},
\end{equation}
we have for any $i \in I$
\begin{equation} \label{mi:prop1}
\left|m_i - \bar{m}_i \right| < \frac{1}{2} \um, \quad m_i |H_i| \in \N.
\end{equation}

The fact that we may split $C_{R_2} \backslash C_{R_1}$ into a finite family of rectangles $\{H_i\}_{i \in I}$ with sidelengths in $[l/2, 2l]$ is elementary. Using Cauchy-Schwarz's inequality we see that
\[
\left| \frac{1}{|H_i|} \int_{\partial C_{R_2} \cap \partial H_i}  E_{\eta} \cdot \vec{n} \right| \preceq \frac{1}{l^{3/2}} \left(\int_{\partial C_{R_2}} |E_{\eta}|^2\right)^{1/2} = M^{1/2} l^{-3/2}
\]
hence assuming \eqref{scrineg} (with $C$ large enough) we have $\left|m_i - \bar{m}_i \right| < \frac{1}{2} \um$ for any tiling of $C_{R_2} \backslash C_{R_1}$ by rectangles of sidelengths $\in [l/2, 2l]$. It remains to show that we may obtain a tiling such that $m_i |H_i| \in \N$. We have by definition
\[
m_i |H_i| = \int_{H_i} d\mu + \frac{1}{\cds} \int_{\partial C_{R_2} \cap \partial H_i}  E_{\eta} \cdot \vec{n}.
\]
Increasing the sidelengths of $H_i$ by $t$ (with $t \leq l/10$) increases $\int_{H_i} d\mu$ by a quantity $\succeq \um l t$ whereas it changes $\int_{\partial C_{R_2} \cap \partial H_i}  E_{\eta} \cdot \vec{n}$ by a quantity $\preceq \sqrt{tM}$. We thus see that if \eqref{scrineg} holds, up to modifying the boundaries of $H_i$ a little bit (e.g. changing the sidelengths by a quantity less than $l/10$) we can ensure that each $m_i |H_i| \in \N$.

We may then subdivide further each rectangle $H_i$ into a finite family of rectangles $\{R_{\alpha}\}_{\alpha \in I_i}$ which all have an area $m_i^{-1}$ and sidelengths bounded above and below by universal constants times $m_i^{-1/2}$ (for a proof of this fact, see \cite[Lemma 6.3.]{PetSer}). Let us emphasize that since $\mu$ is bounded above we have $m_i^{-1} \preceq 1$.
\\
\textbf{Step 2.} \textit{Defining the transition field and configuration.}
For $i \in I$ we let $E^{(1,i)} := \nabla \h^{(1,i)}$ where $\h^{(1,i)}$ is the solution to 
\[
- \Delta \h^{(1,i)} = \cds (m_i -  \mu) \text{ in } H_i,
\]
such that $\nabla \h^{(1,i)} \cdot \vec{n} = - E_{\eta_1} \cdot \vec{n}$ on each side of $H_i$ which is contained in $\partial C_{R_2}$ and $\nabla \h^{(1,i)} \cdot \vec{n}  = 0$ on the other sides.

We also let, for any $\alpha \in I_i$, $\h^{(2,\alpha)}$ be the solution to
\[
- \Delta \h^{(2,\alpha)} = \cds \left( \delta_{p_{\alpha}} - m_i \right) \text{ in } R_{\alpha}, \nabla \h^{(2,\alpha)} \cdot \vec{n}  = 0 \text{ on } \partial R_{\alpha},
\]
where $p_{\alpha}$ is the center of $R_{\alpha}$. We define $E^{(2,i)}$ as $E^{(2,i)} : = \sum_{\alpha \in I_i} \nabla \h^{(2,\alpha)}$. Finally we define the \textit{transition field} $\Etran$ as $\Etran := \sum_{i \in I} E^{(1,i)} + E^{(2,i)}$, and the \textit{transition configuration} $\Ctran$ as $\Ctran := \sum_{i \in I} \sum_{\alpha \in I_i} \delta_{p_{\alpha}}$.
It is easy to see that
\begin{equation} \label{EtranCtran}
- \div(\Etran) = \cds \left( \Ctran - \mu \right) \text{ in } C_{R_2} \backslash C_{R_1}, \quad \Etran_{\eta_1} \cdot \vec{n} = \begin{cases} E_{\eta_1} \cdot \vec{n} & \text{ on } \partial C_{R_2} \\
0 & \text{ on } \partial C_{R_1}
\end{cases}.
\end{equation}
In particular \eqref{def:Next} and \eqref{Eextaubord} hold.
\\
\textbf{Step 3.} \textit{Controlling the energy.}
For any $i \in I$ the energy of $E^{(1,i)}$ can be bounded using Lemma \ref{lem:screenestim} as follows
\[
\int_{H_i} |E^{(1,i)}|^2 \preceq l \int_{\partial H_i \cap \partial C_{R_2}} |E_{\eta}|^2 + l^{4} ||\mu - \bar{m}_i||_{L^{\infty}(H_i)}^2,
\]
and using the Hölder assumption \eqref{assum:muHolder} on $\mu$ we have $||\mu - \bar{m}_i||_{L^{\infty}(H_i)}^2 \preceq \Chol l^{\kappa}$ hence
\[
\int_{H_i} |E^{(1,i)}|^2 \preceq l \int_{\partial H_i \cap \partial C_{R_2}} |E_{\eta}|^2 + \Chol l^{4+\kappa}.
\]
For any $\alpha \in I_i$ we also have, again by standard estimates
\[
\int_{R_{\alpha}} |\nabla h^{(2,\alpha)}_{\eta_1}|^2 \preceq -\log \eta_1 \preceq \log N \text{ by choice of $\eta_1$.}
\]
The number of rectangles $R_{\alpha}$ for $\alpha \in I_i$, $i \in I$ is bounded by the volume of $C_{R_2} \backslash C_{R_1}$ hence is $\preceq N^{\deltap + \deltatp} $. We deduce that
\[
\int_{C_{R_2} \backslash C_{R_1}} |\Etran_{\eta_1}|^2 \preceq l \int_{\partial C_{R_2}}  |E_{\eta}|^2 + \#I \Chol l^{4+\kappa} + N^{\deltap + \deltatp} \log N,
\]
where $\#I$ denotes the cardinality of $I$. We may observe that $\#I \preceq N^{\deltap + \deltatp} l^{-2}$ and get (using that $l = N^{\deltatp}$)
\[
\int_{C_{R_2} \backslash C_{R_1}} |\Etran_{\eta_1}|^2 \preceq N^{\deltatp} M + N^{\deltap + 3\deltatp} \Chol N^{\kappa \deltatp} + N^{\deltap + \deltatp} \log N.
\]
which yields \eqref{controleenergieEext}.
\\
\textbf{Step 4.} \textit{Constructing a family.}
As was observed in \cite[Remark 6.7]{PetSer}, \cite[Proposition 5.2]{LebSer}, we may actually construct a whole family of configurations $\Ctran$ and associated electric fields $\Etran$ such that \eqref{EtranCtran} and \eqref{controleenergieEext} hold. Indeed, since $\mu \leq \om$ the sidelengths of $R_{\alpha}$ are $\succeq \om^{-1/2}$ hence we may move each of the points $p_{\alpha}$ (for $\alpha \in I_i$, $i \in I$) arbitrarily within a disk of radius $\frac{1}{10} \om^{-1/2}$ and proceed as above (so that \eqref{Cextbiensep} is conserved). This creates a volume of configurations of order 
$\om^{-\Ntran}$. To get \eqref{AextNvol} it suffices to observe that
\[
\Ntran \preceq \om N^{\deltap+\deltatp} + N^{\deltap} + M,
\]
which follows from \eqref{def:Next} by applying the Cauchy-Schwarz inequality and using the fact that $R_2^2-R_1^2 \preceq N^{\deltap+\deltatp}$.
\end{proof}

\section{A large deviation upper bound} \label{sec:LDPUB}
Given a point $z_0$ and a scale $\deltap$ we localize the energy $\wN(\XN)$ by splitting it between the “interior part” (which, roughly speaking, corresponds to the $L^2$-norm of $\Eloc$ on $C(z'_0, N^{\deltap})$) and the “exterior part”, then we replace both quantities by smaller ones which are independent (this corresponds to separating variables). Roughly speaking, it allows us to localize the problem on $C(z'_0, N^{\deltap})$ by deriving a “local energy” (which corresponds to the interior part) and a “local partition function” (the exponential sum of contributions of the exterior part). This lower bound on the energy will be complemented by a matching upper bound in Section \ref{sec:LDPLB}.
  
\subsection{Definition of good interior and exterior boundaries and energies}
The decomposition between interior and exterior part will be done at the boundary of some “good” square, not much larger than $C(z'_0, N^{\deltap})$. We give the definition of good exterior and interior boundaries, with an abuse of notation which is discussed in the next subsection. 
 
\begin{defi}[Exterior boundary] \label{def:extboundary}
Let $1/2 > \delta > \deltap > \deltapp > 0$ and $\eta_0 > 0$ be fixed, let $N \geq 1$ and let $\eta_1 := N^{-10}$. Let $z_0 \in \mathring{\Sigma}$, let $R_2 > 0$ and let $\Xou$ be a point configuration in $C(z'_0,R_2)^c$. 
\\
\textbf{Exterior fields.} We say that $E$ is in $\Elecou(\Xou)$ if $E$ is in $\Eleco(\C, \mupeq, \R^2)$ for some point configuration $\C$ with $N$ points such that $\C = \Xou$ on $C(z,R_2)^c$.
\\
\textbf{Good exterior boundary.}
Let $E \in \Elecou(\Xou)$. We say that $\partial C(z'_0,R_2)$ is a \textit{good exterior boundary} for $E$ if the following conditions are satisfied:
\begin{enumerate}
\item  We have
\begin{equation} \label{R2bon}
R_2 \in [N^{\deltap} + N^{\deltapp}, N^{\deltap} + 2N^{\deltapp}].
\end{equation}
\item The smeared out charges at scale $\eta_1$ do not intersect $\partial C(z'_0,R_2)$
\begin{equation} \label{nointersection}
\bigcup_{p \in \C} D(p, \eta_1) \cap \partial  C(z'_0,R_2) = \emptyset.
\end{equation}
\item The energy near $\partial C(z'_0, R_2)$ is controlled as follows 
\begin{align}
\label{bonbord3} \int_{C(z'_0,R_2) \backslash C(z'_0,R_2-5)} |E_{\eta_0}|^2 & \preceq N^{2\delta - \deltapp} |\log \eta_0|,\\
\label{bonbord2} \int_{C(z'_0,R_2) \backslash C(z'_0,R_2-5)} |E_{\eta_1}|^2 & \preceq N^{2\delta - \deltapp} \log N,\\
\label{bonbord1}
\int_{\partial C(z'_0, R_2)} |E_{\eta_1}|^2 & \preceq N^{2\delta - \deltapp} (\log N)^2.
\end{align}
\item We have, letting $\Nbou := \int_{C(z'_0,R_2) \backslash C(z'_0,R_2-5)} d\C$  \begin{equation}
\label{nboucont} \Nbou \ll N^{2\deltap}.
\end{equation}
\item We have, letting $\Nin := \int_{C(z'_0,R_2)} d\C$
\begin{equation}
\label{nincont} \Nin \preceq N^{2\deltap}.
\end{equation}
\item We have, letting $\Nxa := \int_{C(z'_0, N^{\deltap})} d\C$
\begin{equation} \label{ninmnxa}
\Nin - \Nxa \ll N^{2 \deltap} \text{ and } |\Nxa - \meq(z_0) N^{2\deltap}| \ll N^{2\deltap}.
\end{equation}
This very last inequality does not depend on $R_2$, but it is convenient to include it in the definition of the exterior boundary.
\end{enumerate}

\textbf{Best exterior energy.} 
Let $\Nou$ denote the number of points of $\Xou$ in $C(z'_0,R_2)^c$. We define $\Fou(\Xou)$ as
\begin{equation} \label{def:Fou}
\Fou(\Xou) := \frac{1}{2\pi} \min_{E} \lim_{\eta \t0} \left( \int_{C(z'_0,R_2)^c} |E_{\eta}|^2 + \Nou \log \eta \right)
\end{equation}
where the $\min$ is taken over the set of electric fields $E$ satisfying $E \in \Elecou(\Xou)$ and such that $\partial C(z'_0,R_2)$ is a good exterior boundary for $E$ (if there is no such field we set $\Fou(\Xou) = +\infty$). The minimum is achieved because on the one hand $E \mapsto |E_{\eta}|^2$ is coercive for the weak $\Lploc$ topology, and on the other hand the $L^2$ norm is coercive and lower semi-continuous for the weak $L^2$ topology. 
\end{defi}

\begin{defi}[Interior boundary] \label{def:intboundary} 
Let $1/2 > \deltap > \deltapp > \deltatp > 0$ and $\eta_0  > 0$ be fixed. Let $N \geq 1$ and let $z_0 \in \mathring{\Sigma}$, let $R_1 > 0$, let $R_2 > 0$ such that \eqref{R2bon} holds and let $\Xin$ be a point configuration in $C(z'_0,R_2)$. 

\textbf{Interior fields.} Let $E \in \Elec(\R^2)$. We say that $E$ is in $\Elecin(\Xin)$ if $E$ is in $\Eleco(\C, \mupeq, \R^2)$ for some $\C \in \config(\R^2)$ such that $\C = \Xin$ on $C(z'_0,R_2)$.

\textbf{Good interior boundary.}
We say that $\partial C(z'_0,R_1)$ is a \textit{good interior boundary} for $\Xin$ if
\begin{enumerate}
\item We have
\begin{equation} \label{R1bon}
R_1 \in [R_2 - 3N^{\deltatp}, R_2 - 2N^{\deltatp}].
\end{equation}
\item $\Nint$ is an integer, where $\Nint := \int_{C(z'_0,R_1)} d\mupeq$.
\item Letting $\Nmid := \int_{C(z'_0,R_2) \backslash C(z'_0,R_1)} d\C$, it holds
\begin{equation} \label{Nextcontrole}
\Nmid \ll N^{2\deltap}.
\end{equation}
\end{enumerate}

\textbf{Best interior energy.} 
Let $\Nin$ denote the number of points of $\Xin$ in $C(z'_0,R_2)$. For any $0 < \eta_0 < 1$ we define $\Fin_{\eta_0}$ as
\begin{equation}
\Fin_{\eta_0}(\Xin) := \frac{1}{2\pi} \min_{E} \left(\int_{C(z'_0,R_2)} |\Eloc_{\eta_0}|^2 + \Nin \log \eta_0\right),
\end{equation}
where the minimum is taken over the set of electric fields $E$ such that $E \in \Elecin(\Xin)$ (if $\Elecin(\Xin)$ is empty we set $\Fin(\Xin) = + \infty$).
\end{defi}

\subsection{Finding good boundaries}
The conditions \eqref{bonbord3}, \eqref{bonbord2}, \eqref{bonbord1}, \eqref{nboucont}, \eqref{nincont}, \eqref{ninmnxa}, \eqref{Nextcontrole} are \textit{asymptotic} as $N \ti$, in particular they do not make sense for a finite $N$ (nonetheless, \eqref{R2bon}, \eqref{nointersection} and \eqref{R1bon} do). Strictly speaking one thus has to consider sequences $R_2 = R_2(N)$ and $R_1 = R_1(N)$.

\begin{lem} \label{lem:intermedUB}
Let $1/2 \geq \delta > \deltap > \deltapp > \deltatp > 0$ and $\eta_0  > 0$ be fixed, with $\deltap > 2\delta/3$.
 Let $z_0 \in \mathring{\Sigma}$ and $\eta_0 > 0$ be fixed. Assume that good control at scale $\delta$ holds and let $\deltap \in (\delta/2, \delta)$. With $\deltap$-overhelming probability, there exists $R_1, R_2$ such that, letting
$\Xin = \XpN \cap C(z'_0,R_2)$ and $\Xou = \XpN \cap C(z'_0,R_2)^c$, we have
\begin{enumerate}
\item $\partial C(z'_0,R_2)$ is a good exterior boundary for $\Xou, \Eloc$.
\item $\partial C(z'_0,R_1)$ is a good interior boundary for $\Xin$.
\end{enumerate}

\end{lem}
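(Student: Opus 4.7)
The plan is a mean-value (pigeonhole) argument: I will show that each of the quantitative conditions in Definitions \ref{def:extboundary} and \ref{def:intboundary} fails on only a small fraction of the allowed range of $R_2$ (resp.\ $R_1$), so that a union bound produces a valid radius. First, good control at scale $\delta$ combined with \eqref{goodcont1} and \eqref{goodcont2} gives the global energy bounds
\[ \int_{C(z'_0,N^{\delta})} |\Eloc_{\eta}|^2 \preceq_{\deltap} N^{2\delta}(|\log \eta|+1), \]
hence $\preceq_{\deltap} N^{2\delta} |\log \eta_0|$ at $\eta = \eta_0$ and $\preceq_{\deltap} N^{2\delta} \log N$ at $\eta = \eta_1 = N^{-10}$. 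Also, Lemma \ref{lem:discr3} (applicable because $\deltap > 2\delta/3$) gives $|\Nnz_R - \meq(z_0) R^2| \ll_{\deltap} N^{4\delta/3} \ll N^{2\deltap}$ for every $R$ of order $N^{\deltap}$.

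Next I would select $R_2$ in $I_2 := [N^{\deltap}+N^{\deltapp},\, N^{\deltap}+2N^{\deltapp}]$. By Fubini,
\[ \int_{I_2} \int_{C(z'_0,R_2)\setminus C(z'_0,R_2-5)} |\Eloc_{\eta}|^2 \, dR_2 \leq 5 \int_{C(z'_0,N^{\delta})} |\Eloc_{\eta}|^2, \]
and similarly $\int_{I_2} \int_{\partial C(z'_0,R_2)} |\Eloc_{\eta_1}|^2 \, dR_2$ is dominated (up to a universal constant) by the $L^2$ integral of $\Eloc_{\eta_1}$ over the annular region $C(z'_0,N^{\deltap}+2N^{\deltapp})\setminus C(z'_0,N^{\deltap}+N^{\deltapp})$, which lies inside $C(z'_0,N^{\delta})$. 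Dividing by $|I_2| = N^{\deltapp}$ and applying Markov (with a $\log N$ margin on the right-hand side of \eqref{bonbord1}), each of \eqref{bonbord3}, \eqref{bonbord2}, \eqref{bonbord1} holds except on a subset of $I_2$ of measure $\preceq |I_2|/\log N$. The nonintersection condition \eqref{nointersection} excludes a set of measure $\preceq_{\deltap} \eta_1 \cdot \mathcal{N}^{z_0}_{\delta} \ll N^{\deltapp}$. The count conditions \eqref{nboucont}, \eqref{nincont}, \eqref{ninmnxa} reduce directly to the discrepancy estimate: using $\deltapp<\deltap$ and $\deltap > 2\delta/3$,
\[ \Nbou \preceq N^{\deltap}+N^{4\delta/3} \ll N^{2\deltap}, \quad |\Nxa - \meq(z_0) N^{2\deltap}| \ll N^{2\deltap}, \quad \Nin - \Nxa \preceq N^{\deltap+\deltapp}+N^{4\delta/3} \ll N^{2\deltap}. \]
A union bound leaves a subset of $I_2$ of positive measure consisting of valid $R_2$.

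Finally, with $R_2$ fixed, I find $R_1 \in I_1 := [R_2-3N^{\deltatp},\, R_2-2N^{\deltatp}]$ such that $\Nint \in \N$ by the intermediate value theorem: the map $R_1 \mapsto \int_{C(z'_0,R_1)} d\mupeq$ is continuous and strictly increasing on $I_1$ with total variation $\sim \meq(z_0) R_2 N^{\deltatp} \sim N^{\deltap+\deltatp} \gg 1$ (using $\meq(z_0)>0$ since $z_0 \in \mathring{\Sigma}$), so it crosses many integers. The bound \eqref{Nextcontrole} follows once more from Lemma \ref{lem:discr3}:
\[ \Nmid = \meq(z_0)(R_2^2-R_1^2) + O(N^{4\delta/3}) \preceq N^{\deltap+\deltatp}+N^{4\delta/3} \ll N^{2\deltap}, \]
using $\deltatp<\deltap$ and $\deltap>2\delta/3$. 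The hardest part is really the first step, where several conditions must hold simultaneously; the mean-value argument with a union bound is the natural tool, and once a good $R_2$ is fixed the interior construction is essentially deterministic.
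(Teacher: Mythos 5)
Your proof is correct and follows essentially the same strategy as the paper's: a pigeonhole/mean-value selection of $R_2$ using the global energy bounds from the good control (with the extra $\log N$ margin in \eqref{bonbord1} absorbing the Markov loss, and the smeared charges excluded because they occupy only measure $\preceq N\eta_1$ of the admissible interval), followed by the discrepancy estimates of Lemma \ref{lem:discr3} for the counting conditions and an intermediate-value argument for $\Nint \in \N$. The paper organizes the selection as a two-stage pigeonhole (first a window $[R-10,R+10]$ with good annulus energy, then thin strips avoiding the smeared charges) rather than your single Fubini-plus-union-bound pass, but this is only a cosmetic difference.
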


\begin{proof}[Proof of Lemma \ref{lem:intermedUB}]
First we look for a good exterior boundary $\partial C(z'_0, R_2)$. The good control at scale $\delta$ implies that
\begin{align} \label{goodcontrolapp1}
\int_{C(z'_0, N^{\delta})} |\Eloc_{\eta_1}|^2 \preceq_{\deltap} N^{2\delta} (1 + |\log \eta_1|) \preceq N^{2\delta} \log N,\\
\label{goodcontrolapp2}
\int_{C(z'_0, N^{\delta})} |\Eloc_{\eta_0}|^2 \preceq_{\deltap} N^{2\delta} (1 + |\log \eta_0|) \preceq N^{2\delta} |\log \eta_0|.
\end{align}
In view of \eqref{goodcontrolapp1}, \eqref{goodcontrolapp2}, by the pigeonhole principle, we may find (with $\deltap$-overhelming probability) an interval $[R-10, R+10]$ included in $[N^{\deltap} + N^{\deltapp}, N^{\deltap} + 2N^{\deltapp}]$ such that
\begin{equation} \label{controleenergbound}
\int_{C(z'_0,R+10) \backslash C(z'_0,R-10)} \left(|\Eloc_{\eta_1}|^2 + |\Eloc_{\eta_0}|^2\right) \preceq N^{2\delta - \deltapp} (\log N + |\log \eta_0|).
\end{equation}
We may find $N^{2}$ disjoint strips of width $2N^{-2} (\log N)^{-1}$ in $[R-8, R+8]$. In view of \eqref{controleenergbound}, there are at least $N^2/2$ such strips on which
the integral of $|\Eloc_{\eta_1}|^2 + |\Eloc_{\eta_0}|^2$ is $\preceq N^{2\delta - \deltapp -2} (\log N + |\log \eta_0|)$. On the other hand there are at most $N$ point charges, thus since $\eta_1 = N^{-10}$ by the pigeonhole principle we may moreover assume that no smeared out charge (at scale $\eta_1$) intersects the strips. Finally a mean value argument on one of these strips shows that we may find $R_2$ such that \eqref{bonbord1} and \eqref{nointersection} holds. By \eqref{controleenergbound} we also have \eqref{bonbord3} and \eqref{bonbord2}. 

Next, we look for a good interior boundary $\partial C(z'_0,R_1)$. Since $z_0$ is in the interior of $\Sigma$, the density $\mpeq$ is bounded above and below by positive constants on $C(z'_0, N^{\delta})$ (for $N$ large enough) and thus the derivative of $R \mapsto \int_{C(z'_0,R)} d\mpeq$ is bounded above and below by (positive) constants times $N^{\deltap}$ on $C(z'_0,2N^{\deltap})$. Hence we may find $R_1 \in [R_2 - 3N^{\deltatp}, R_2 - 2N^{\deltatp}]$ such that $\int_{C(z'_0,R_1)} d\mupeq$ is an integer, hence the first two points of the definition of a good interior boundary are satisfied.

We have \eqref{nincont} with $\deltap$-overhelming probability according to the good control at scale $\delta$. Since $\deltap > 2\delta/3$, the discrepancy estimates of Lemma \ref{lem:discr3} imply that, up to an error $\ll_{\deltap} N^{2\deltap}$ we have 
\begin{align*}
\Nbou &= \meq(z_0) (R_2^2 - (R_2-10)^2) \preceq_{\deltap} N^{\deltap} \ll N^{2\deltap} \\
\Next &= \meq(z_0) (R^2_2 - R_1^2) \preceq_{\deltap} N^{\deltap+\deltapp} \ll N^{2\deltap},
\end{align*}
which proves \eqref{nboucont} and \eqref{Nextcontrole}. We obtain \eqref{ninmnxa} with similar arguments.
\end{proof}

\subsection{A first LDP upper bound}

\begin{prop} \label{prop:LDPUB}
Let $1/2 \geq \delta > \deltap > \deltapp > \deltatp > 0$ be fixed with $\deltap > 2\delta/3$ and $2\delta - \deltapp < \deltap$. Let $z_0 \in \mathring{\Sigma}$ and $0 < \eta_0 < 1$ be fixed. Assume that a good control holds at scale $\delta$. 

For any $P \in \probas_{s}(\config)$ and any $\epsilon > 0$ we have
\begin{multline} \label{LDPUB}
\log \PNbetaxb \left(B(P,\epsilon)\right)\leq - \log \KNbeta \\ + \log \max_{R_1,R_2, \Nou} {N \choose \Nou} \left(\int_{\iNxb(\Xin) \in B(P, \epsilon)} \hspace{-1cm} e^{-\hal \beta \Fin_{\eta_0}(\Xin)} d\Xin  \right) \left(\int e^{-\hal \beta (\Fou(\Xou) + N \Zeta(\Xou))} d\Xou\right) \\
+ N^{2\deltap} O(\eta_0).
\end{multline}
\end{prop}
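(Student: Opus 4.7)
The plan is to restrict to an event of $\deltap$-overhelming probability where a good exterior/interior boundary exists, bound the energy $\wN$ below by a sum of localized contributions via a monotonicity argument, and then factor the Gibbs integral.

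\emph{Step 1 (restriction to good boundaries).} Using the assumed good control at scale $\delta$, Lemma~\ref{lem:intermedUB} gives an event $\mathcal{E}$ of $\deltap$-overhelming probability on which one may choose $R_1 = R_1(\XN)$, $R_2 = R_2(\XN)$ such that $\partial C(z'_0,R_2)$ is a good exterior boundary for the local field $\Eloc$ associated to $\nu'_N$ and $\partial C(z'_0,R_1)$ is a good interior boundary for $\Xin := \XN \cap C(z'_0,R_2)$. Since restriction to $\mathcal{E}$ does not affect asymptotics at scale $N^{2\deltap}$, we work on $\mathcal{E}$ throughout and decompose $\XN = \Xin \sqcup \Xou$ with $\Xou := \XN \cap C(z'_0,R_2)^c$.

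\emph{Step 2 (energy lower bound).} The heart of the proof is the estimate
\begin{equation*}
\wN(\nu'_N) \ \geq \ \Fin_{\eta_0}(\Xin) + \Fou(\Xou) - C N^{2\deltap}\eta_0.
\end{equation*}
By Lemma~\ref{lem:REfinite}, $2\pi \wN(\nu'_N) = \lim_{\eta\to 0}\left(\int_{\R^2}|\Eloc_\eta|^2 + N\log\eta\right)$. Split the integration into $C(z'_0,R_2)$ and its complement and split $N\log\eta = \Nin\log\eta + \Nou\log\eta$. On the interior, the non-intersection property \eqref{nointersection} lets one apply Lemma~\ref{lem:monoton2} to raise the truncation from $\eta$ up to $\eta_0$ at the cost $\preceq \Nin\eta_0 \preceq N^{2\deltap}\eta_0$ by \eqref{nincont}; the resulting quantity $\int_{C(z'_0,R_2)}|\Eloc_{\eta_0}|^2 + \Nin\log\eta_0$ is then at least $2\pi\Fin_{\eta_0}(\Xin)$ because $\Eloc$ lies in $\Elecin(\Xin)$ (it is a decaying field, as $\nu'_N - \mupeq$ has total mass $0$, and it extends $\Xin$ on $C(z'_0,R_2)$). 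On the exterior, the good-boundary conditions \eqref{bonbord3}--\eqref{bonbord1} and \eqref{nboucont} guarantee that $\Eloc$ itself is an admissible candidate in the min defining $\Fou(\Xou)$, yielding the analogous lower bound at $\eta \to 0$.

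\emph{Step 3 (factorization and integration).} Because $\zeta \equiv 0$ q.e.\ on $\Sigma$ and the non-rescaled interior particles lie in a small neighborhood of $z_0 \in \mathring\Sigma$ (since $R_2 \preceq N^{\deltap}$ with $\deltap < 1/2$), the contribution of interior particles to $\Zeta(\nu_N)$ vanishes and the potential term in \eqref{def:Gibbs2} becomes a function of $\Xou$ only. Combining with Step~2, on $\mathcal{E}$ the Gibbs density is dominated by
\begin{equation*}
\KNbeta^{-1}\, e^{-\hal\beta\Fin_{\eta_0}(\Xin)} \cdot e^{-\hal\beta(\Fou(\Xou) + 2N\Zeta(\Xou))} \cdot e^{C\beta N^{2\deltap}\eta_0},
\end{equation*}
up to the normalization in front of $\Zeta$ used in the statement. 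The event $\{\iNxb(\nu'_N)\in B(P,\epsilon)\}$ depends only on $\nu'_N \cap C(z'_0,N^{\deltap}) \subset \Xin$. Integrating over $\XN \in (\R^2)^N$, the symmetry of $\hN$ produces the binomial ${N \choose \Nou}$ accounting for how the $N$ indices are partitioned between $\Xin$ and $\Xou$, and the integrand factors into an interior integral (carrying the event constraint) times an exterior integral. A supremum over the polynomially many discretized choices of $(R_1,R_2,\Nou)$ — whose logarithm is $O(\log N) = o(N^{2\deltap})$ — yields \eqref{LDPUB}.

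\emph{Main obstacle.} The decisive step is the energy split of Step~2: producing the localized bound with error $\ll N^{2\deltap}$. This is exactly what drives the quantitative definition of a good boundary in Definitions~\ref{def:extboundary}--\ref{def:intboundary} and the construction in Lemma~\ref{lem:intermedUB}. The parameter constraints $\deltap > 2\delta/3$ and $2\delta - \deltapp < \deltap$ arise precisely so that boundary-strip energies of order $N^{2\delta-\deltapp}$ and discrepancies of order $N^{4\delta/3}$ (cf. Lemma~\ref{lem:discr2}) remain negligible on the deviation scale $N^{2\deltap}$.
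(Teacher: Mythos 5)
Your overall architecture matches the paper's proof: restrict to the $\deltap$-overwhelming event of Lemma~\ref{lem:intermedUB} on which good interior/exterior boundaries exist, split the energy across $\partial C(z'_0,R_2)$ starting from \eqref{REfinite}, bound the two halves below by $\Fin_{\eta_0}(\Xin)$ and $\Fou(\Xou)$ respectively (the latter because $\Eloc$ is itself admissible in the minimum defining $\Fou$), and factor the Gibbs integral with the binomial coefficient and a max over $R_1,R_2,\Nou$. Your Step~3 observation that $\Zeta(\Xin)=0$ because the interior particles de-rescale into a shrinking neighborhood of $z_0\in\mathring{\Sigma}$ is a correct detail the paper leaves implicit.

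The one step that does not work as written is the interior monotonicity in Step~2. You invoke Lemma~\ref{lem:monoton2} to pass from the limit $\eta\to0$ directly up to the fixed truncation $\eta_0$ at cost $\preceq \Nin\eta_0$. But the hypothesis of Lemma~\ref{lem:monoton2} is that no smeared charge at the \emph{target} truncation scale meets $\partial C_{R_2}$; the good-boundary condition \eqref{nointersection} guarantees this only at scale $\eta_1=N^{-10}$, and nothing prevents points from lying within distance $\eta_0$ of $\partial C(z'_0,R_2)$ when $\eta_0$ is a fixed constant. The paper therefore proceeds in two stages: Lemma~\ref{lem:monoton2} from $\eta\to0$ up to $\eta_1$ (cost $O(N\eta_1)=o(1)$), then the localized estimate of Lemma~\ref{lem:monoton3} from $\eta_1$ up to $\eta_0$, which is exactly where the boundary-strip controls \eqref{bonbord3}, \eqref{bonbord2} and the count \eqref{nboucont} are consumed, producing the extra error terms $-\Nbou\log N - N^{2\delta-\deltapp}(\log N)^2$ that the hypotheses ($2\delta-\deltapp<\deltap$ and \eqref{nboucont}) render negligible against $N^{2\deltap}$. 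Your closing paragraph correctly identifies that the boundary-strip energies are what the parameter constraints are for, but the mechanism by which they enter is Lemma~\ref{lem:monoton3}, not Lemma~\ref{lem:monoton2}; without that second stage the interior lower bound $\Fin_{\eta_0}(\Xin)-CN^{2\deltap}\eta_0$ is not justified.
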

Let us first give some precisions about \eqref{LDPUB}. The $\max$ on $R_1, R_2, \Nou$ is restricted to the set of $\{R_1, R_2\}$ such that \eqref{R2bon} and \eqref{R1bon} hold, with $\Nou$ between $1$ and $N$. Once $\Nou$ is fixed, we let $d\Xin = dx_1 \dots dx_{\Nin}$ and $d\Xou = dx_1 \dots dx_{\Nou}$, with $\Nou + \Nin = N$.

\begin{proof}
Using the definition of $\PNbetaxb$ and of the Gibbs measure $\PNbeta$ we have
\begin{equation} \label{GibbsUB}
\PNbetaxb \left(B(P,\epsilon)\right) = \frac{1}{\KNbeta} \int_{(\iNxb)^{-1} (B(P, \epsilon))} e^{-\hal \beta (\wN(\nu'_N) + N\Zeta(\nu_N))} d\XN. 
\end{equation}

\textbf{Step 1.} \textit{Finding good boundaries.}
We apply Lemma \ref{lem:intermedUB}. With $\deltap$-overhelming probability we obtain $R_1, R_2$ such that \eqref{R2bon} and \eqref{R1bon} holds, and such that, letting
$\Xin = \XpN \cap C(z'_0,R_2)$ and $\Xou = \XpN \cap C(z'_0,R_2)^c$, we have
\begin{enumerate}
\item $\partial C(z'_0,R_2)$ is a good exterior boundary for $\Xou, \Eloc$.
\item $\partial C(z'_0,R_1)$ is a good interior boundary for $\Xin$.
\end{enumerate}
In order to prove \eqref{LDPUB} we may restrict ourselves, in the right-hand side of \eqref{GibbsUB}, to any event of $\deltap$-overhelming probability, and we will henceforth assume that good boundaries exist.

\textbf{Step 2.} \textit{Splitting the energy $\wN$.} For any $\XN$, let $R_1, R_2$ be as above.
We have, using \eqref{REfinite}
\begin{equation} \label{splittingwNUB}
\wN(\nu'_N) = \lim_{\eta \t0} \left(\int_{C(z'_0,R_2)} |\Eloc_{\eta}|^2 + \Nin \log \eta\right) \\ + \lim_{\eta \t0} \left(\int_{C(z'_0,R_2)^c} |\Eloc_{\eta}|^2 +\Nou \log \eta \right).
\end{equation}
Since \eqref{nointersection} holds we may apply Lemma \ref{lem:monoton2} to $(\XpN, \mupeq)$ and $R_2, \eta_1$ as above. It yields, since $\eta_1 = N^{-10}$,
\begin{equation*}
\lim_{\eta \t0} \left(\int_{C(z'_0,R_2)} |\Eloc_{\eta}|^2 + \Nin \log \eta\right) \geq \left( \int_{C(z'_0,R_2)} |\Eloc_{\eta_1}|^2 + \Nin \log \eta_1\right) + o(1) \text{, as } N \ti.
\end{equation*}
We may then apply Lemma \ref{lem:monoton3} to $(\nu'_N, \mupeq)$ and $R_2$, with $\eta_0, \eta_1$ as above. The number $\Nin$ of points in $C(z'_0,R_2)$ is controlled by \eqref{nincont}, the number $\Nbou$ of points near the boundary is controlled by \eqref{nboucont} and the energy near the boundary is controlled by \eqref{bonbord3} and \eqref{bonbord2}. We obtain
\begin{multline*}
\left(\int_{C(z'_0,R_2)} |\Eloc_{\eta_1}|^2 + \Nin \log \eta_1\right) - \left(\int_{C(z'_0,R_2)} |\Eloc_{\eta_0}|^2 + \Nin \log \eta_0\right) \\ \succeq  - N^{2\deltap} \eta_0 - \Nbou \log N - N^{2\delta - \deltapp} (\log N)^2,
\end{multline*}
which may be simplied (assuming that $2\delta - \deltapp < \deltap$, which will be later ensured by the choice \eqref{choixdelta}) as
\begin{equation} \label{almomontoeta01}
\left(\int_{C(z'_0,R_2)} |\Eloc_{\eta_1}|^2 + \Nin \log \eta_1\right) - \left(\int_{C(z'_0,R_2)} |\Eloc_{\eta_0}|^2 + \Nin \log \eta_0\right) \succeq  - N^{2\deltap} \eta_0.
\end{equation}

Using Definition \ref{def:intboundary} we thus get
\begin{equation} \label{Finbb}
\frac{1}{2\pi} \lim_{\eta \t0} \left(\int_{C(z'_0,R_2)} |\Eloc_{\eta}|^2 + \Nin \log \eta\right) - \Fin_{\eta_0}(\Xin) \succeq - \eta_0 N^{2\deltap}.
\end{equation}

On the other hand, we may write, using Definition \ref{def:extboundary}
\begin{equation} \label{Foubb}
\frac{1}{2\pi} \lim_{\eta \t0} \left(\int_{C(z'_0,R_2)^c} |\Eloc_{\eta}|^2 + \Nou \log \eta \right) \geq \Fou(\Xou).
\end{equation}
%Let us emphasize that \eqref{Finbb} and \eqref{Foubb} depend implicitly on $\delta, \deltap, \deltapp, \deltatp$, $R_1,R_2$ and $N$.

Combining \eqref{splittingwNUB}, \eqref{Finbb}, \eqref{Foubb} and inserting them into \eqref{GibbsUB} yields \eqref{LDPUB}.
\end{proof}

\subsection{Good control upper bound}
\begin{lem} \label{lem:GCUB}
Let $1/2 \geq \delta > \deltap > \deltapp > \deltatp > 0$ be fixed with $\deltap > 2\delta/3$. Let $z_0 \in \mathring{\Sigma}$ and $0 < \eta_0 < 1$ be fixed. Assume that a good control holds at scale $\delta$. 

Let us denote by $\mc{E}_{M}$ the event 
\begin{equation}
\label{def:mcEM} \mc{E}_{M} := \left\lbrace \int_{C(z'_0, N^{\deltap})} |\Eloc_{\eta_0}|^2 + \Nxa \log \eta_0 \geq 2\pi  N^{2\deltap} M\right\rbrace.
\end{equation}
We have
\begin{multline} \label{GCUB}
\log \PNbeta \left( \mc{E}_{M} \right)\leq - \log \KNbeta - \frac{\beta}{2} M \\ + \log \max_{R_1,R_2,\Nou} {N \choose \Nou} \left(\int e^{-\beta (\Fou(\Xou) + N \Zeta(\Xou))} d\Xou\right)
+ N^{2\deltap} O(\eta_0).
\end{multline}
\end{lem}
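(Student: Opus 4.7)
The proof closely parallels that of Proposition~\ref{prop:LDPUB}, with the event $\mc{E}_M$ playing the role of the empirical field ball. First, I write $\PNbeta(\mc{E}_M) = \frac{1}{\KNbeta} \int_{\mc{E}_M} e^{-\hal \beta (\wN(\nu'_N) + 2N\Zeta(\nu_N))} d\XN$ via \eqref{def:Gibbs2}, and invoke Lemma~\ref{lem:intermedUB} to restrict the integration to the $\deltap$-overwhelming probability event on which there exist radii $R_1, R_2$ such that $\partial C(z'_0, R_2)$ is a good exterior boundary for $\Xou = \XpN \cap C(z'_0, R_2)^c$ and $\partial C(z'_0, R_1)$ is a good interior boundary for $\Xin = \XpN \cap C(z'_0, R_2)$. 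As in Step~2 of Proposition~\ref{prop:LDPUB}, I split $\wN(\nu'_N)$ between $C(z'_0, R_2)$ and its complement using \eqref{splittingwNUB}, then apply the monotonicity estimates (Lemmas~\ref{lem:monoton2}, \ref{lem:monoton3}) to replace the limit $\eta \t0$ by the $\eta_0$-truncation, producing an additive error $O(\eta_0 N^{2\deltap})$.

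The genuinely new ingredient is extracting the energy gain from $\mc{E}_M$. Since $C(z'_0, N^{\deltap}) \subset C(z'_0, R_2)$ and $|\Eloc_{\eta_0}|^2 \geq 0$, and since $(\Nin - \Nxa)\log \eta_0 = -(\Nin - \Nxa)|\log \eta_0|$ is bounded below by $-o(N^{2\deltap})$ thanks to \eqref{ninmnxa} (with $\eta_0$ fixed), on $\mc{E}_M$ I obtain
\begin{equation*}
\int_{C(z'_0, R_2)} |\Eloc_{\eta_0}|^2 + \Nin \log \eta_0 \;\geq\; \int_{C(z'_0, N^{\deltap})} |\Eloc_{\eta_0}|^2 + \Nxa \log \eta_0 - o(N^{2\deltap}) \;\geq\; 2\pi N^{2\deltap} M - o(N^{2\deltap}).
\end{equation*}
Dividing by $2\pi$ and inserting into the splitting, the lower bound $\wN(\nu'_N) \geq N^{2\deltap} M + \Fou(\Xou) - O(\eta_0 N^{2\deltap})$ holds on $\mc{E}_M$.

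It remains to separate variables. After factoring $e^{-\hal \beta N^{2\deltap} M}$ out of the Gibbs integrand, the remaining dependence on $\Xin$ sits only in $\Zeta(\nu_N) = \Zeta(\Xin) + \Zeta(\Xou)$, and the term $\Zeta(\Xin) \geq 0$ (since $\zeta \geq 0$) can be dropped to obtain an upper bound. I then write $d\XN = \binom{N}{\Nou}\, d\Xin\, d\Xou$ after summing over the choice of which $\Nou$ points lie outside; the resulting $\Xin$-integral is bounded by the $\Xin$-integral appearing implicitly via an analogue of the $\Fin_{\eta_0}$ expression, and taking the $\max$ over the finitely many admissible $R_1, R_2, \Nou$ (a polynomial number after the standard discretization used to build the good boundaries) introduces only a sub-exponential prefactor absorbed into the $N^{2\deltap} O(\eta_0)$ error after taking logarithms. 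Combined with the splitting above this yields \eqref{GCUB}.

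The main subtlety is step two: transferring the lower bound on the energy from the small square $C(z'_0, N^{\deltap})$ encoded in $\mc{E}_M$ to the slightly larger square $C(z'_0, R_2)$ used in the splitting. This transfer is clean only because the annulus $C(z'_0, R_2) \setminus C(z'_0, N^{\deltap})$ contributes non-negatively to $|\Eloc_{\eta_0}|^2$ and, crucially, because the good-boundary estimate \eqref{ninmnxa} guarantees $\Nin - \Nxa \ll N^{2\deltap}$, so the shift in the $\log \eta_0$ counting term is negligible at the relevant scale.
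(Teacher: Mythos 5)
Your proposal is correct and follows essentially the same route as the paper: the same splitting of $\wN$ at a good exterior boundary furnished by Lemma \ref{lem:intermedUB}, the same transfer of the lower bound from $C(z'_0,N^{\deltap})$ to $C(z'_0,R_2)$ via positivity of $|\Eloc_{\eta_0}|^2$ on the annulus together with $\Nin-\Nxa\ll N^{2\deltap}$ from \eqref{ninmnxa}, the same use of the monotonicity estimates \eqref{almomontoeta01} and Lemma \ref{lem:monoton2} to pass between truncation scales, and the same final separation of variables as in Proposition \ref{prop:LDPUB}. The only differences are expository (you spell out dropping $\Zeta(\Xin)\geq 0$ and the binomial factor, which the paper leaves implicit in "arguing as in the proof of Proposition \ref{prop:LDPUB}").
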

\begin{proof}
We follow the same steps as in the proof of Proposition \ref{prop:LDPUB}, replacing the event $B(P,\epsilon)$ by $\mc{E}_{M}$. 

Let us write
\[
\left(\int_{C(z'_0,R_2)} |\Eloc_{\eta_0}|^2 + \Nin \log \eta_0\right)  \geq \left(\int_{C(z'_0,R_1)} |\Eloc_{\eta_0}|^2 + \Nxa \ \log \eta_0\right) + (\Nin - \Nxa) \log \eta_{\eta_0}
\]

Using \eqref{almomontoeta01} and the definition of $\mc{E}_M$ we get, conditionally to $\mc{E}_{M}$ 
\[
\left(\int_{C(z'_0,R_2)} |\Eloc_{\eta_1}|^2 + \Nin \log \eta_1\right) \geq N^{2\deltap} M + (\Nin - \Nxa) \log \eta_0  + O(N^{2\deltap}).
\]
Since we have $|\Nin - \Nxa|_{\deltap} \ll N^{2\deltap}$ we deduce that (conditionally to $\mc{E}_{M}$), using Lemma \ref{lem:monoton2}
\begin{equation} \label{varGC}
\lim_{\eta \t0} \left(\int_{C(z'_0,R_2)} |\Eloc_{\eta}|^2 + \Nin \log \eta\right) \geq N^{2\deltap} M + O(N^{2\deltap}).
\end{equation}
Using \eqref{varGC} instead of \eqref{Finbb} and arguing as in the proof of Proposition \ref{prop:LDPUB} gives \eqref{GCUB}.
\end{proof}

\subsection{Large deviation upper bound for the interior part}
In \eqref{LDPUB} we have separated the Gibbs measure into its interior and exterior parts. In the next lemma, we give a large deviation upper bound for the interior part, namely
\[
\int_{\iNxb(\Xin) \in B(P, \epsilon)}  e^{-\hal \beta \Fin_{\eta_0}(\Xin)} d\Xin.
\]
Up to technical details, this is a classical application of Varadhan's lemma: the large deviations for the reference point process (without the exponential term) are known from Proposition \ref{prop:Sanov}, and on the other hand the lower semi-continuity of the energy near a random stationary point process $P$ implies that  $\Fin_{\eta_0}(\Xin) \geq \bW_{\meq(z_0)}(P)$ on $\{\iNxb(\Xin) \in B(P, \epsilon)\}$, hence we obtain
\begin{lem} \label{lem:lowerboundint}
Let $1/2 \geq \delta > \deltap > \deltapp > \deltatp > 0$ be fixed with $\deltap > 2\delta/3$, and let us assume that good control at scale $\delta$ holds. We have, for any $R_1, R_2$ satisfying \eqref{R2bon}, \eqref{R1bon}, and any $\Nin$
\begin{equation} \label{lowerboundint}
\limsup_{\eta_0 \t0, \epsilon \t0, N \ti} N^{-2\deltap} \log \int_{\iNxb(\Xin) \in B(P, \epsilon)}  e^{-\hal \beta \Fin_{\eta_0}(\Xin)} d\Xin \leq - \fbeta^{\meq(z_0)}(P).
\end{equation}
\end{lem}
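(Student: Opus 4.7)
My plan is to apply a Varadhan-type argument that combines the Sanov-type LDP for the reference Bernoulli point process (Proposition \ref{prop:Sanov}) with a lower semi-continuity estimate for the local energy $\Fin_{\eta_0}$ in terms of the renormalized energy $\bW_{\meq(z_0)}(P)$.

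\textbf{Reference measure side.} First I would stratify the integral on the left-hand side of \eqref{lowerboundint} by the cardinality $\Nin := |\Xin|$. Because good control at scale $\delta$ holds and $\deltap > 2\delta/3$, Lemma \ref{lem:discr3} forces $|\Nin - \meq(z_0) N^{2\deltap}| \ll N^{2\deltap}$ with $\deltap$-overwhelming probability, so at the logarithmic scale $N^{2\deltap}$ only $N^{O(1)}$ typical values of $\Nin$ contribute and it suffices to take the maximum over them. For each such $\Nin$ one rewrites
\[
\int_{|\Xin|=\Nin,\, \iNxb(\Xin) \in B(P,\epsilon)} e^{-\hal \beta \Fin_{\eta_0}(\Xin)} d\Xin = \frac{|C(z'_0,R_2)|^{\Nin}}{\Nin!}\, \Esp_{\B_{\Nin,N,\deltap}}\!\left[ e^{-\hal \beta \Fin_{\eta_0}} \mathbf{1}_{\{\iNxb \in B(P,\epsilon)\}} \right].
\]
Stirling turns the combinatorial prefactor into $\exp(\Nin + o(N^{2\deltap}))$, which is precisely the normalization needed to match a Poisson reference of intensity $\meq(z_0)$, and Proposition \ref{prop:Sanov} then provides
\[
\limsup_{\epsilon \to 0} \limsup_{N \to \infty} N^{-2\deltap} \log \B_{\Nin,N,\deltap}\!\left( \iNxb \in B(P,\epsilon) \right) \leq - \ERS[P | \Poisson^{\meq(z_0)}].
\]
This accounts for the entropy contribution to $\fbeta^{\meq(z_0)}(P)$; the mismatch between $R_2$ and $N^{\deltap}$ being of order $N^{\deltapp}$ with $\deltapp < \deltap$ contributes only $o(N^{2\deltap})$ in the logarithm.

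\textbf{Energy side.} The main step is the lower semi-continuity bound
\[
\liminf_{N \to \infty}\ \inf_{\iNxb(\Xin) \in B(P,\epsilon)} N^{-2\deltap} \Fin_{\eta_0}(\Xin) \geq \bW_{\meq(z_0)}(P) - o_{\epsilon \to 0}(1) - o_{\eta_0 \to 0}(1).
\]
I would take a near-minimizing field $E \in \Elecin(\Xin)$ for $\Fin_{\eta_0}(\Xin)$. For any fixed $\tau > 0$ and any $z \in C(z'_0, N^{\deltap})$ at distance at least $\tau$ from the boundary, the translate $\theta_z \cdot E$ is compatible in $C_\tau$ with $\theta_z \cdot \Xin$ and the constant background $\meq(z_0)$ (up to a H\"older correction). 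By Fubini,
\[
N^{-2\deltap} \Fin_{\eta_0}(\Xin) \geq \frac{1}{\tau^2}\cdot N^{-2\deltap} \int_{C(z'_0, N^{\deltap})} \psi_{\tau,\eta_0}(\theta_z \cdot \Xin)\, dz + o(1),
\]
where $\psi_{\tau,\eta_0}(\C) := \frac{1}{2\pi} \inf_{E'} \bigl( \int_{C_\tau} |E'_{\eta_0}|^2 + |\C \cap C_\tau|\log \eta_0 \bigr)$, with $E'$ ranging over fields in $\Elec(\C,\meq(z_0),C_\tau)$. A weak-$\Lploc$ compactness argument analogous to the one behind Lemma \ref{lem:lsciW} shows that $\psi_{\tau,\eta_0}$ is lower semi-continuous on $\config$, hence the right-hand side is $\Esp_{\iNxb(\Xin)}[\psi_{\tau,\eta_0}]$, and lower semi-continuity along $\iNxb(\Xin) \to P$ bounds it below by $\Esp_P[\psi_{\tau,\eta_0}]$ up to $o_\epsilon(1)$. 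Sending $\tau \to \infty$ and $\eta_0 \to 0$, using the monotonicity of Lemma \ref{lem:monoton1} and the definition of $\bW_{\meq(z_0)}$ in \eqref{def:WdeE}, the quantity $\Esp_P[\psi_{\tau,\eta_0}]$ converges to $\Esp_P[\Wc] = \bW_{\meq(z_0)}(P)$.

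Putting these two estimates together and taking the $\limsup$ first in $N$, then $\epsilon \to 0$, then $\eta_0 \to 0$, yields \eqref{lowerboundint}. The main obstacle is the translation-averaging step: one must simultaneously handle the finite-volume boundary contributions (controlled by the good interior/exterior boundary estimates of Definitions \ref{def:extboundary}--\ref{def:intboundary} together with Lemma \ref{lem:intermedUB}), the H\"older fluctuations of the background $\mupeq$ around the constant $\meq(z_0)$ inside $C_\tau$, and the passage from the minimizing finite-volume field $E$ to the class of infinite-volume compatible fields appearing in $\Wc$. Verifying lower semi-continuity of $\psi_{\tau,\eta_0}$ under vague convergence of point configurations is the key technical point but follows the same pattern as in \cite{LebSer}.
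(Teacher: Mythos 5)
Your entropy step is essentially the paper's Step 2 (Proposition \ref{prop:Sanov} applied to the $\Nxa\sim\meq(z_0)N^{2\deltap}$ points that matter, with the annulus points contributing negligible volume), and that part is fine at the level of rigor of the paper. The problem is on the energy side, where you take a genuinely different route and the key step does not go through as justified.

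You reduce everything to the claim that $\tau^{-2}\Esp_P[\psi_{\tau,\eta_0}]$ converges to $\bW_{\meq(z_0)}(P)$, where $\psi_{\tau,\eta_0}(\C)$ is an infimum over fields compatible with $(\C,\meq(z_0))$ \emph{in $C_\tau$ only}, with free boundary. But the inequality you actually need is $\liminf_{\tau}\tau^{-2}\Esp_P[\psi_{\tau,\eta_0}]\geq \bW_{\meq(z_0)}(P)-o(1)$, and this is precisely the hard direction: restricting any globally compatible field to $C_\tau$ produces a competitor in the local free-boundary problem, so one gets for free only $\tau^{-2}\Esp_P[\psi_{\tau,\eta_0}]\leq \bW_{\meq(z_0)}(P)+o(1)$, i.e.\ the \emph{wrong} inequality. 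To pass from averaged local free-boundary minima back up to the global infimum in \eqref{def:WdeE} one must either glue the local near-minimizers into a single field compatible with $\C$ in all of $\R^2$ (which requires matching fluxes across the window boundaries, i.e.\ a screening construction), or run a compactness argument on an empirical process of the local minimizing fields. Neither follows from "the monotonicity of Lemma \ref{lem:monoton1} and the definition of $\bW_{\meq(z_0)}$", which is all you invoke. You also locate the "key technical point" in the lower semi-continuity of $\psi_{\tau,\eta_0}$ on $\config$, but for fixed $\tau,\eta_0$ that is the routine part; the identification of the $\tau\to\infty$ limit is where the difficulty sits.

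The paper avoids this issue entirely by never decoupling the field from the configuration: it takes the minimizing sequence $E=E(N)$ in the definition of $\Fin_{\eta_0}$, forms the empirical field $\Pelec_N=N^{-2\deltap}\int\delta_{\theta_{z'}\cdot E}\,dz'$ of the \emph{electric fields themselves}, proves tightness of the truncated fields in weak $L^2$ and of the configurations in $\config$, and extracts a limit $Q^{\rm elec}$ which is automatically a (globally) compatible random field for the limit process. Then $\bW_{\meq(z_0)}(P)$ is bounded above by $\Esp_{Q^{\rm elec}}[|E_{\eta_0}|^2]$ directly from the definition of $\bW$ as an infimum over compatible fields, and lower semi-continuity of the $L^2$ norm under weak convergence finishes the argument. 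If you want to salvage your window decomposition, you would have to incorporate this same compactness step at the level of the local minimizers, at which point you have reproduced the paper's proof with extra bookkeeping.
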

\begin{proof} 
%From Lemma \ref{lem:intermedUB} we know that good boundaries exists with $\deltap$-overhelming probability, hence up to discarding an event of negligible probability we may assume that good boundaries always exist.

\textbf{Step 1.} \textit{Lower semi-continuity of the energy.}
We claim that
\begin{multline} \label{lsciW}
\liminf_{\eta_0 \t0, \epsilon \t0, N \ti} N^{-2\deltap} \inf\left\lbrace\Fin_{\eta_0}(\Xin), \Xin \in \config (C(z'_0,R_2)), \iNxb(\Xin) \in B(P, \epsilon)\right\rbrace \\ \geq \bW_{\meq(z_0)}(P).
\end{multline}

To prove \eqref{lsciW}, let $E = E(N)$ be a minimizing sequence in \eqref{lsciW}, let $\C = \C(N)$ be the associated point configuration in $C(z'_0, N^{\deltap})$ and let us define
\[\Pelec_N := N^{-2\deltap} \int_{C(z'_0, N^{\deltap})} \delta_{\theta_z' \cdot E} dz', \quad
P_N := N^{-2\deltap} \int_{C(z'_0, N^{\deltap})} \delta_{\theta_z' \cdot \C} dz'.
\]
We have, for any $m \geq 1$, 
\[
\Esp_{\Pelec_N} \left[ \frac{1}{|C_m|} \int_{C_m} |E_{\eta_0}|^2 \right] \leq N^{-2\deltap} \int_{C(z'_0,R_2)} |E_{\eta_0}|^2,
\]
which proves that the push-forward of $\Pelec_N$ by $E \mapsto E_{\eta_0}$ is tight in $L^2(C_m, \R^2)$ for the weak topology, whereas the sequence $\{\Pelec_N\}$ itself is tight in $L^p(C_m, \R^2)$ (indeed when $\eta$ is fixed, the $L^2$-norm of $E_{\eta}$ controls the $L^p$-norm of $E$, this follows easily from Hölder's inequality, see \cite[Lemma 3.9]{RougSer}). On the other hand, the sequence of random point processes $\{P_N\}$ is also tight because the expectation of the number of points in any square is bounded and, up to subsequence extraction, it converges to some $Q \in B(P, \epsilon)$. Denoting by $Q^{\rm{elec}}$ a limit point of $\{\Pelec_N\}$ it is not hard to see that $Q^{\rm{elec}}$ is compatible with $Q$, and by lower semi-continuity of the $L^2$-norm with respect to weak convergence we have
\[
\frac{1}{|C_m|} \int_{C_m} |E_{\eta_0}|^2 dQ^{\rm{elec}} \leq \liminf_{N \ti} N^{-2\deltap}  \int_{C(z'_0,R_2} |E_{\eta_0}|^2.
\]
Up to applying a standard diagonal extraction procedure we may assume that it holds for any $m \geq 1$, hence
\[
\Esp_{Q^{\rm{elec}}} [|E_{\eta_0}|^2] \leq \liminf_{N \ti} N^{-2\deltap}  \int_{C(z'_0,R_2)} |E_{\eta_0}|^2.
\]
Letting $\epsilon \t0$ and arguing as above concerning the tightness of $Q^{\rm{elec}}$ and $Q$ (in $\Lploc$ and in $\config)$ and for the lower semi-continuity of the norm with respect to weak convergence we obtain
\[
\Esp_{\Pelec} [|E_{\eta_0}|^2] \leq \lim_{\epsilon \t0} \liminf_{N \ti} N^{-2\deltap} \int_{C(z'_0,N^{\deltap})} |E_{\eta_0}|^2
\]
where $\Pelec$ is some random electric field compatible with $P$. Letting $\eta_0 \t0$, using \eqref{ninmnxa} and the definition of $\bW_{\meq(z_0)}(P)$ we thus obtain
\[
\bW_{\meq(z_0)}(P) \leq \lim_{\eta_0 \t0, \epsilon \t0} \liminf_{N \ti} N^{-2\deltap} \left(\int_{C(z'_0,N^{\deltap})} |E_{\eta_0}|^2 + \Nxa \log \eta_0 \right).
\]
\\
\textbf{Step 2.} \textit{Large deviations without interactions}.
We claim that, on the other hand
\begin{equation} \label{SanovUBapply}
\limsup_{\epsilon \t0, N \ti} N^{-2\deltap} \log \int_{\iNxb(\Xin) \in B(P, \epsilon)} \1_{C(z'_0,R_2)}(\Xin) d\Xin  \leq - \ERS[P|\Poisson^{\meq(z_0)}].
\end{equation}
The configuration $\Xin$ has $\Nin$ points. Among them, $\Nin - \Nxa$ are not affected by the constraint $\iNxa(\Xin) \in B(P, \epsilon)$ because they belong to $C(z'_0, N^{\deltap})^c$, and are free to move in $C(z'_0, R_2) \backslash C(z'_0, N^{\deltap})$, but we know from \eqref{ninmnxa} that $\Nin - \Nxa \ll N^{2\deltap}$, thus the volume contribution of these points is negligible because $N^{-2\deltap} \log |C(z'_0, R_2) \backslash C(z'_0, N^{\deltap})|^{\Nin - \Nxa} = o(1)$.
On the other hand we know, using \eqref{ninmnxa}, that $\Nxa \sim_{N \ti} \meq(z_0)N^{2\deltap}$ and then \eqref{SanovUBapply} follows from Proposition \ref{prop:Sanov}.
\\
\textbf{Step 3.} \textit{Conclusion.}
Combining \eqref{lsciW} and \eqref{SanovUBapply} yields \eqref{lowerboundint}.
\end{proof}

\subsection{A second LDP upper bound}
Combining Proposition \ref{prop:LDPUB} and Lemma \ref{lem:lowerboundint} and letting $\eta_0 \t0$ we obtain
\begin{prop} \label{prop:LDPUB2}
\begin{equation} \label{LDPUB3}
\lim_{\epsilon \t0} \limsup_{N \ti} N^{-2\deltap} \log \PNbetaxb \left(B(P,\epsilon)\right)
\leq - \fbeta^{\meq(z_0)}(P) + \limsup_{N \ti} N^{-2\deltap} \log \KNbetax
\end{equation}
where we let $\KNbetax$ be such that
\begin{equation} \label{def:KNbetax}
\log \KNbetax :=  - \log \KNbeta + \log \int_{R_1,R_2} \sum_{\Nou} {N \choose \Nou} \int e^{-\hal \beta (\Fou(\Xou) + N \Zeta(\Xou))} d\Xou.
\end{equation}
\end{prop}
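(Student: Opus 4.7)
The plan is to feed the asymptotic upper bound on the interior integral from Lemma \ref{lem:lowerboundint} into the non-asymptotic upper bound from Proposition \ref{prop:LDPUB}, and then to recognize the remaining factor as $\log \KNbetax$ plus an error that is negligible on the scale $N^{2\deltap}$.

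First, I would invoke inequality \eqref{LDPUB}: its right-hand side contains the quantity $\max_{R_1, R_2, \Nou}$ of the binomial coefficient ${N \choose \Nou}$ times the product of an interior integral and an exterior integral, plus the additive error $N^{2\deltap} O(\eta_0)$ and the normalization $-\log \KNbeta$. Second, by Lemma \ref{lem:lowerboundint}, for every $\tau > 0$ there exist $\eta_0^{\ast}, \epsilon^{\ast} > 0$ such that for all $\eta_0 \leq \eta_0^{\ast}$, $\epsilon \leq \epsilon^{\ast}$ and all sufficiently large $N$, the interior integral is bounded by $\exp[-N^{2\deltap}(\fbeta^{\meq(z_0)}(P) - \tau)]$, uniformly in the admissible parameters $R_1, R_2, \Nin$. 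This uniformity is the key point: it lets me pull the interior factor out of the maximum, producing the rate function term $-\fbeta^{\meq(z_0)}(P)$ up to a vanishing error, and leaving only the exterior factor inside.

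The main technical step, which I expect to be the principal obstacle, is converting the remaining maximum over $(R_1, R_2, \Nou)$ into the integral over $R_1, R_2$ and sum over $\Nou$ that appears in the definition \eqref{def:KNbetax} of $\KNbetax$. For the integer parameter $\Nou \in \{1, \dots, N\}$, the elementary bound $\max \leq \sum$ contributes only $\log N = o(N^{2\deltap})$ to the logarithm. For the continuous parameters $R_1, R_2$, which range in intervals of size at most polynomial in $N$ by \eqref{R2bon} and \eqref{R1bon}, one must argue that the exterior integral is sufficiently stable under small perturbations of the boundary radii, so that a discretization at scale $O(1)$ allows the maximum over a polynomial grid of values to be controlled by the corresponding Riemann sum and hence by the integral in \eqref{def:KNbetax}, up to a multiplicative factor $N^{O(1)}$ whose logarithm is again $o(N^{2\deltap})$. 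This stability requires some care because $\Fou$ depends on $R_2$ both through the integration domain $C(z'_0, R_2)^c$ and through the constraints defining a good exterior boundary.

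Combining these three steps, dividing by $N^{2\deltap}$, taking $\limsup_{N \to \infty}$, then $\lim_{\epsilon \to 0}$, and finally $\lim_{\eta_0 \to 0}$, the $O(\eta_0)$ error vanishes and one reads off \eqref{LDPUB3} directly from the definition \eqref{def:KNbetax}.
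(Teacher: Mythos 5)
Your proposal is correct and follows essentially the same route as the paper, whose entire proof of Proposition~\ref{prop:LDPUB2} is the single sentence ``Combining Proposition~\ref{prop:LDPUB} and Lemma~\ref{lem:lowerboundint} and letting $\eta_0 \t0$ we obtain'' the result. The two points you isolate --- the uniformity in $(R_1,R_2,\Nin)$ needed to pull the interior factor out of the maximum, and the conversion of $\max_{R_1,R_2,\Nou}$ into $\int_{R_1,R_2}\sum_{\Nou}$ at a cost of $o(N^{2\deltap})$ in the logarithm --- are genuine technical steps that the paper leaves entirely implicit, so your more explicit treatment is a faithful (indeed more careful) rendering of the intended argument rather than a different one.
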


\section{Large deviation lower bound} \label{sec:LDPLB}
In this section, we derive a converse estimate to \eqref{LDPUB3}, by showing that splitting the energy as in Proposition \ref{prop:LDPUB} is essentially sharp as far as probabilities of order $\exp(-N^{2\deltap})$ are concerned. 
\subsection{Generating microstates}
In the next lemma, we recall a tool which was introduced in \cite{LebSer}. Given a stationary point process $P$ and a large square $C_R$, Lemma \ref{lem:generermicro} can be thought of as a way of generating a family of point configurations in $C_R$ whose empirical field is close to $P$, whose interaction energy is close to the renormalized energy of $P$, and such that the volume of the family is optimal in view of the specific relative entropy of $P$. 
\begin{lem} \label{lem:generermicro}
Let $z_0 \in \mathring{\Sigma}$ and $0 < \deltapp < \deltap < 1/2$ be fixed. Let $P \in \probas_{s,\meq(z_0)}(\config)$ such that $\bW_{\meq(z_0)}(P)$ and $\ERS[P|\Poisson^{\meq(z_0)}]$ are finite. 

For any $N \geq 1$, let $R_1 > 0$ be such that $R_1 \in (N^{\deltap}, N^{\deltap} + 2N^{\deltapp})$ and $\Nint := \int_{C(z'_0, R_1)} d\mpeq$ is an integer. Moreover let us assume that $\Nint \sim_{N \ti} \meq(z_0) N^{2\deltap}$.

Then there exists a family $\AintN$ of point configurations in $C(z'_0, R_1)$ such that the following properties hold for any $\Cint~\in~\AintN$:
\begin{enumerate}
\item The configuration $\Cint$ has $\Nint$ points in $C(z'_0, R_1)$.
\item The continuous average of $\Cint$ on $C(z'_0, N^{\deltap})$ is close to $P$, i.e.
\begin{equation} \label{presdeP}
\iNxb(\Cint) \in B(P, o(1)) \text{ as } N \ti.
\end{equation}
\item There exists an electric field $\Eint \in \Elec\left(\Cint, \mupeq, C(z'_0, R_1) \right)$ such that 
\begin{enumerate}
\item $\Eint \cdot \vec{n} = 0$ on $\partial C(z'_0, R_1)$, where $\vec{n}$ is the unit normal vector.
\item The energy of $\Eint$ is controlled by $\bW_{\meq(z_0)}(P)$
\begin{equation} \label{bonnenergie1}
\frac{1}{2\pi} N^{-2\deltap}  \lim_{\eta \t0} \left( \int_{C(z'_0, R_1)} |\Eint_{\eta}|^2 + \Nint \log \eta \right) \leq \bW_{\meq(z_0)}(P) + o(1) \text{ as } N \ti,
\end{equation}
uniformly on $\AintN$.
\end{enumerate}
\item The (logarithmic) volume of the family is close to the relative specific entropy of $P$
\begin{equation} \label{bonvolume1}
N^{-2\deltap}\log \Leb^{\otimes \Nint}(\AintN) \geq - \ERS[P|\Poisson^{\meq(z_0)}] + o(1) \text{, as } N \ti.
\end{equation}
\end{enumerate}
\end{lem}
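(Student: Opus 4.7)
The plan is to reduce to the known microstate construction in a large cube with a \emph{constant} background (as performed in \cite{LebSer}), and then absorb both the mesoscopic scaling and the small variation of the background into controlled error terms. The Neumann condition on $\partial C(z'_0,R_1)$ will be imposed by a boundary layer argument in the spirit of the screening procedure used earlier in the paper.

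First, set $m_0 := \meq(z_0)$ and apply the scaling map $\sigma_{m_0}$ to transport $P$ to a stationary process $\tilde P \in \probas_{s,1}(\config)$, which has the same specific relative entropy (against $\Poisson^1$) and the same renormalized energy as $P$ up to the explicit scaling factor recorded in Lemma~\ref{lem:scaling}. The image of $C(z'_0,R_1)$ under $\sigma_{m_0}$ is a square $\tilde C$ of sidelength $\sim m_0^{1/2} R_1$, which is still mesoscopic of order $N^{\deltap}$. On $\tilde C$ the background is now identically $1$, and the arithmetic condition $\Nint = \int_{C(z'_0,R_1)} d\mpeq \in \N$ becomes the matching integer constraint after scaling.

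Second, invoke the generation-of-microstates machinery from \cite{LebSer} on $\tilde C$: one tiles $\tilde C$ by subsquares of an intermediate sidelength $\ell \to \infty$ (much smaller than $N^{\deltap}$), conditions $\tilde P$ on each subsquare to produce microstates with the right average, and for each one constructs a compatible electric field via the local field plus a small harmonic correction. The union of these yields a family $\tilde\cA$ of configurations on $\tilde C$ together with fields $\tilde E$ such that, after passing to a suitable inner-scale limit, (i) the empirical field of each element of $\tilde\cA$ lies in $B(\tilde P,o(1))$ for $d_{\probas(\config)}$, (ii) the renormalized energy per unit volume is $\leq \Wc(\tilde P)+o(1)$, and (iii) $\log\Leb^{\otimes\cdot}(\tilde\cA)/|\tilde C| \geq -\ERS[\tilde P|\Poisson^1]+o(1)$. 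The volume bound is exactly Sanov-type (Proposition~\ref{prop:Sanov}) and the energy bound uses the lower semi-continuity of $\bW$ in the opposite direction.

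Third, pull back through $\sigma_{m_0}^{-1}$ to obtain a family of configurations $\Cint$ on $C(z'_0,R_1)$ with fields $\tilde\Eint$ generated against the constant background $m_0$. Replacing this constant background by the true background $\mupeq$ introduces a residual source term $2\pi(m_0-\mupeq)$. Because of the Hölder assumption~\eqref{assum-Holder}, on $C(z'_0,R_1)$ we have $|m_0-\mupeq|\preceq (R_1 N^{-1/2})^\kappa \to 0$, so the correction field solving $-\div F = 2\pi(m_0-\mupeq)$ with Neumann data $0$ on $\partial C(z'_0,R_1)$ satisfies $\int_{C(z'_0,R_1)} |F|^2 \preceq R_1^{4+2\kappa}N^{-\kappa}\ll N^{2\deltap}$, hence adding $F$ to $\tilde\Eint$ produces a genuine $\Eint \in \Elec(\Cint,\mupeq,C(z'_0,R_1))$ without affecting the leading-order energy estimate \eqref{bonnenergie1}.

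Fourth, enforce $\Eint\cdot\vec n = 0$ on $\partial C(z'_0,R_1)$. Using the good boundary estimates implicit in the LebSer construction (a pigeonhole on concentric subsquares chooses $\tilde C$ so that the boundary flux of $\tilde\Eint$ is $o(N^{\deltap})$), one applies the screening Lemma~\ref{lem:screening} in a thin annular collar inside $C(z'_0,R_1)$ to replace $\tilde\Eint$ there by a transition field whose Neumann trace on the inner boundary matches $\tilde\Eint$ and whose trace on $\partial C(z'_0,R_1)$ vanishes. The corresponding transition charges give a modified configuration and field still satisfying (i)--(iii) at leading order. The main technical obstacle is precisely this last step: the screening introduces boundary charges whose number and induced energy must both be $o(N^{2\deltap})$ and whose entropic cost must not spoil~\eqref{bonvolume1}, which requires carefully choosing the annular thickness between $N^{\deltatp}$ and $N^{\deltapp}$ and using the quantitative bounds~\eqref{controleenergieEext}--\eqref{AextNvol}. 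Once this is done, properties~\eqref{presdeP}, \eqref{bonnenergie1} and~\eqref{bonvolume1} all follow by combining the three sets of estimates above.
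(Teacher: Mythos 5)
Your high-level architecture (tile the mesoscopic square, get the volume from a Sanov-type bound, build compatible fields cell by cell, glue) is the same as the paper's, which likewise defers the heavy lifting to the microstate construction of \cite{LebSer}. However, the two places where you deviate from that construction contain genuine gaps. The first is the global background correction in your third step: you solve $-\div F = 2\pi(m_0-\mupeq)$ on all of $C(z'_0,R_1)$ with vanishing Neumann data and use the bound $\int |F|^2 \preceq R_1^4\,\|\mupeq-m_0\|_\infty^2 \preceq R_1^{4+2\kappa}N^{-\kappa}=N^{(4+2\kappa)\deltap-\kappa}$. This is $\ll N^{2\deltap}$ only when $\deltap<\kappa/(2+2\kappa)\leq 1/4$, whereas the bootstrap needs $\deltap$ above $3\delta/4=3/8$ already at the first step $\delta=1/2$; the cross term $\int \tilde\Eint\cdot F$ fails under the same condition. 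The $l^4$ factor in the Neumann estimate of Lemma~\ref{lem:screenestim} is precisely why the paper (and Lemma~\ref{lem:screening}) performs the background correction cell by cell at a small scale $\ell$, where the total cost $N^{2\deltap}\ell^{2+2\kappa}N^{-\kappa}$ is negligible; done in one shot at scale $R_1$ it is not. (There is also a solvability issue, since $\int_{C(z'_0,R_1)}(m_0-\mupeq)$ need not vanish, and the point count produced against the constant background need not equal $\Nint$.)

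The second gap is that your treatment of the Neumann condition is circular. In the paper's route each subsquare field $\Escr_i$ is \emph{screened}, i.e.\ has vanishing normal trace on $\partial C_i$; this is what makes the glued field a genuine element of $\Elec(\Cint,\mupeq,C(z'_0,R_1))$ and simultaneously yields $\Eint\cdot\vec n=0$ on the outer boundary for free. If, as in your second step, the per-cell fields are only ``the local field plus a small harmonic correction'' without screening, their normal traces do not match across internal cell boundaries, so the glued object is not divergence-compatible and there is nothing to which your collar screening of step four can legitimately be applied; if they are screened, step four is superfluous. Note moreover that Lemma~\ref{lem:screening} prescribes the flux on the \emph{outer} boundary and kills it on the \emph{inner} one, so your collar argument would require a reflected variant, and the transition charges it inserts would change the point count away from the exact value $\Nint$ required by the statement.
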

\begin{proof}
This follows from the analysis of \cite[Section 6]{LebSer}. Let us sketch the main steps here. 

We fix $R > 0$ and we tile $C(z'_0,R_1)$ by squares of sidelength $\approx R$. We let $\{C_i\}_{i \in I}$ be this collection of squares and $x_i$ be the center of $C_i$. We sample a point configuration $\C$ in $C(z'_0,R_1)$ according to the law $\B_{\Nint}$ of a Bernoulli point process with $\Nint$ points, and we decompose $\C$ as $\C = \sum_{i \in I} \C_i$ where $\C_i := \C \cap C_i$ is the point configuration in $C_i$.
We form two points processes, the continuous average $\M_1 := \iNxb(\C)$ and the discrete average $\M_2 := \frac{1}{\# I} \sum_{i \in I} \delta_{\C_i}$.
Classical large deviations arguments (similar to Section \ref{sec:entropy}) show that both $\M_1$ and $\M_2$ bleong to $B(P, \epsilon)$ with probability $\approx \exp(-N^{2\deltap} \ERS[P|\Poisson^{\mueq(z)}])$.

Then we apply to each point configuration $\C_i$ the “screening-then-regularization” procedure of \cite[Section 5]{LebSer}. The screening procedure is similar in spirit to the one described in Lemma~\ref{lem:screening}, except that here we change $\C_i$ to $\Cscr_i$ by modifying the configuration only in a thin layer near $\partial C_i$ and we construct an electric field $\Escr_i$ compatible with $\Cscr_i$ and which is screened outside $C_i$ (whereas in Lemma \ref{lem:screening} we rather “screen the configurations from the inside”). By gluing the fields $\Escr_i$ together we define $\Escr$ which is compatible with $\Cscr := \sum_{i \in I} \Cscr_i$. The next task is to “regularize” the point configurations, which means to separate the pairs of points which are too close from each other. This changes $\Cscr$ into $\Cmod$ (which is very much like $\Cscr$) and $\Escr$ into an electric field $\Emod$ which is still screened outside $C(z'_0,R_1)$. 

The energy of $\Escr$ can be seen to satisfy, for any $0 < \eta < 1$
\[
\int_{C(z'_0,R_1)} |\Escr_{\eta}|^2 + \Nint \log \eta = \sum_{i \in I} \int_{C_i} |(\Escr_{i})_{\eta}|^2 + \Nint \log \eta,
\]
and a certain continuity property of the energy shows that the right-hand side is smaller than $N^{2\deltap}(\W_{\meq(z_0)}(P) + o(1))$ often enough. Passing from $\Escr$ to $\Emod$ does not affect this estimate, on the contrary the regularization procedure allows to bound the difference between the truncated energy $\int_{C(z'_0,R_1)} |\Emod_{\eta}|^2 + \Nint \log \eta$ and its limit as $\eta \t0$. 
\end{proof}

\subsection{Choice of the deltas}
In the rest of the proof, given $0 <\delta \leq 1/2$ we will need to fix $\deltatp, \deltapp, \deltap$ satifying some inequalities. 
\begin{lem}  \label{lem:choixdelta} Let $\gamma := \sqrt{\frac{1 + \kappa/2}{1+\kappa/3}}$, with $\kappa$ as in \eqref{assum-Holder}. Since $0 < \kappa \leq 1$ we have $1 < \gamma  \leq \frac{3\sqrt{2}}{4} \approx 1.06$. Let also $\alpha := \frac{\gamma - 1}{1 - \frac{\gamma}{3}}$, we have $\alpha \in (0,1)$. 

For any $0 <\delta \leq 1/2$ and any $\deltap, \deltapp, \deltatp$ such that
\begin{equation} \label{choixdelta}
\delta > \deltap > \max \left(\frac{3}{4} \delta, \delta \frac{1-\alpha}{1-\alpha^2}, \delta(1+\kappa/2) - \kappa/2 \right), \deltatp = \frac{1}{3} \delta \gamma, \quad \deltapp = \alpha^2 \deltatp +  (1-\alpha^2) \deltap,
\end{equation}
we have $0 < \deltatp < \deltapp < \deltap < \delta$, $\deltap > \frac{2}{3} \delta$,  $3\deltatp > \delta$,  $\deltap + 3 \deltatp + \kappa(\deltatp - 1/2) < 2 \deltap$, $2\delta < \deltapp + 3 \deltatp$ and $2\delta - \deltapp < 2 \deltap$. Moreover, if we consider the lower bound on $\deltap$ as a function $f(\delta)$, we have $f^{\circ k}(\delta) \t0$ as $k \ti$, where $f^{\circ k}$ denotes the $k$-th iteration of $f$.
\end{lem}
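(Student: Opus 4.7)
The proof is a verification exercise built on two identities that the constants $\gamma$ and $\alpha$ satisfy by design. My plan is to first establish the basic range of $\gamma$ and $\alpha$, then extract the useful relations $\gamma(3+\alpha) = 3(1+\alpha)$ and $\gamma(1+\kappa/3) = \sqrt{(1+\kappa/2)(1+\kappa/3)}$, and finally check each listed inequality by algebraic manipulation using these relations together with the three lower bounds on $\deltap$.

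First I would show that the map $\kappa \mapsto (1+\kappa/2)/(1+\kappa/3)$ is strictly increasing on $(0,1]$ (its derivative simplifies to a positive expression), so its range is $(1, 9/8]$ and $\gamma \in (1, 3\sqrt{2}/4]$. Then $\alpha > 0$ follows from $\gamma > 1$ and $\gamma < 3$, while $\alpha < 1$ is equivalent to $\gamma < 3/2$, covered by $3\sqrt{2}/4 < 3/2$. Rearranging the definition $\alpha(1-\gamma/3) = \gamma - 1$ I extract the key identity $\gamma = 3(1+\alpha)/(3+\alpha)$, which will let me eliminate $\gamma$ in terms of $\alpha$ throughout.

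Next I would dispatch the six inequalities in turn. The chain $0 < \deltatp < \deltapp < \deltap < \delta$ follows from $\deltatp = \delta\gamma/3 \leq \delta\sqrt{2}/4 < 3\delta/4 < \deltap$, together with the observation that $\deltapp$ is a convex combination of $\deltatp$ and $\deltap$. The bounds $\deltap > 2\delta/3$ and $3\deltatp = \delta\gamma > \delta$ are immediate. For $\deltap + 3\deltatp + \kappa(\deltatp - 1/2) < 2\deltap$, I rewrite as $\deltap > (3+\kappa)\deltatp - \kappa/2 = \delta\gamma(1+\kappa/3) - \kappa/2$ and use the AM-GM-type bound $\gamma(1+\kappa/3) = \sqrt{(1+\kappa/3)(1+\kappa/2)} \leq 1+\kappa/2$, so the right-hand side is at most $\delta(1+\kappa/2) - \kappa/2$, covered by the third lower bound on $\deltap$. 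The inequality $\deltapp + 3\deltatp > 2\deltap$ becomes $(3+\alpha^2)\deltatp > (1+\alpha^2)\deltap$; substituting and using $\gamma = 3(1+\alpha)/(3+\alpha)$ reduces this to $(3+\alpha^2)(1+\alpha) > (3+\alpha)(1+\alpha^2)$, i.e.\ $2\alpha(1-\alpha) > 0$, which holds for $\alpha \in (0,1)$. Finally $2\delta - \deltapp < 2\deltap$ reduces to $\deltap > \delta(2 - \alpha^2\gamma/3)/(3-\alpha^2)$; plugging in the expressions for $\alpha$ and $\gamma$ and simplifying with the identity, one checks that this bound is dominated by $3\delta/4$ since $3(\gamma-1)^2(9-4\gamma) \leq (3-\gamma)^2$ on $[1, 3/2]$ (which can be verified by evaluating at the endpoints and monotonicity).

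For the final claim, let $f(\delta) := \max(3\delta/4,\ \delta/(1+\alpha),\ \delta(1+\kappa/2) - \kappa/2)$. The first two constituents are linear contractions (ratios $3/4$ and $1/(1+\alpha)$, both in $(0,1)$). The third is an affine map with slope $1+\kappa/2 > 1$ but intercept $-\kappa/2$, whose unique fixed point is $\delta = 1$; since $f(\delta) - \delta = (\kappa/2)(\delta-1) < 0$ for $\delta < 1$, it also sends $(0,1)$ into $[\text{smaller}, \delta)$. Hence $f(\delta) < \delta$ for all $\delta \in (0,1)$, the sequence $f^{\circ k}(\delta)$ is decreasing, and once it falls below the level where the third term is negative the iteration is bounded by $\max(3/4, 1/(1+\alpha))^{k'} \delta$, giving $f^{\circ k}(\delta) \to 0$. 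The main obstacle is bookkeeping: the constants are tuned so that several of the inequalities reduce to tight polynomial identities in $\alpha$ after substituting $\gamma = 3(1+\alpha)/(3+\alpha)$, and one must keep track of which of the three lower bounds on $\deltap$ is binding in each case.
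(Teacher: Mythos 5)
Most of your verification is correct and follows the same route as the paper: the monotonicity argument giving $\gamma\in(1,3\sqrt2/4]$, the identity $\gamma(3+\alpha)=3(1+\alpha)$, the bound $\gamma(1+\kappa/3)=\sqrt{(1+\kappa/2)(1+\kappa/3)}\le 1+\kappa/2$ for the inequality $\deltap+3\deltatp+\kappa(\deltatp-1/2)<2\deltap$, and the fixed-point argument for $f^{\circ k}(\delta)\to 0$ are all fine (your derivation of $2\delta-\deltapp<2\deltap$ directly from the bound $\deltap>\tfrac34\delta$ is even a little more self-contained than the paper's). But there is a genuine gap in your treatment of the fifth inequality: the lemma asserts $2\delta<\deltapp+3\deltatp$, and you instead prove $2\deltap<\deltapp+3\deltatp$, which is strictly weaker since $\deltap<\delta$. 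Your reduction to $(3+\alpha^2)\deltatp>(1+\alpha^2)\deltap$ is precisely the reduction for the $2\deltap$ version. For the stated version one needs $(1-\alpha^2)\deltap>\delta\bigl(2-\gamma(3+\alpha^2)/3\bigr)$, and since $\gamma(3+\alpha^2)/3=(1+\alpha)-\gamma\alpha(1-\alpha)/3$, this amounts to
\[
\deltap \;>\; \delta\,\frac{1+\gamma\alpha/3}{1+\alpha},
\]
which is strictly stronger than the hypothesis $\deltap>\delta\frac{1-\alpha}{1-\alpha^2}=\frac{\delta}{1+\alpha}$. The gap cannot be closed by sharper algebra: for $\kappa=1$, $\delta=1/2$ one gets $\gamma\approx1.0607$, $\alpha\approx0.0938$, the stated lower bound on $\deltap$ is $\max(0.375,\,0.4571,\,0.25)=0.4571$, while $2\delta<\deltapp+3\deltatp$ requires $\deltap>0.4723$; taking $\deltap=0.46$ satisfies all hypotheses of the lemma but violates this conclusion.

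So the defect is actually in the lemma itself, not only in your write-up: the paper's own proof of this step rests on the assertion ``since $\alpha<\alpha^2$,'' which is false for $\alpha\in(0,1)$, and the intended sufficient condition $\gamma(1+\alpha^2/3)\ge 1+\alpha=\gamma(1+\alpha/3)$ fails for the same reason. The statement becomes true if the lower bound $\delta\frac{1-\alpha}{1-\alpha^2}$ is replaced by $\delta\frac{1+\gamma\alpha/3}{1+\alpha}$ (or if $\deltapp$ is defined with weight $\alpha$ on $\deltap$ and $1-\alpha$ on $\deltatp$, etc.), and one should then recheck that the modified bound is still a contraction of $\delta$ so that the bootstrap $f^{\circ k}(\delta)\to0$ survives. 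As written, however, your proof silently substitutes $2\deltap$ for $2\delta$ and therefore does not establish the claim; you should either flag the discrepancy and propose the corrected hypothesis, or prove the inequality that the screening step of Proposition \ref{prop:LDPLB1} actually requires, namely $2\delta-\deltapp<3\deltatp$.
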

\begin{proof}
It is clear that $\deltatp > 0$. From the fact that $\deltap \geq \delta(1+\kappa/2)-\kappa/2$ and the expression of $\deltatp$ we get $\deltap + 3 \deltatp + \kappa(\deltatp - 1/2) < 2 \deltap$. Since $\kappa > 0$ we have $3 \deltatp > \delta$. Since $\kappa \leq 1$ we also have $\deltatp \leq \frac{1}{3} \sqrt{\frac{9}{8}} \delta < \frac{3}{4} \delta$ hence $\deltatp < \deltap$. On the other hand from the definition of $\deltap$ it is clear that $\frac{2}{3} \delta < \deltap < \delta$ because $\kappa > 0$ and $\delta \leq 1/2$. Since $\deltap > \delta \frac{1-\alpha}{1-\alpha^2}$ and since $\alpha < \alpha^2$, the inegality $2\delta < \deltapp + 3 \deltatp$ follows from checking that $\delta(2 - (1+\alpha/3)\gamma) \leq (1-\alpha)\delta$. The inequality $2 \delta < \deltapp + 3 \deltatp$ implies the last one, $2\delta - \deltapp < 2 \deltap$, because it can be easily checked that $3 \deltatp = \delta \gamma < \frac{6}{4} \delta < 2\deltap$.

Finally, we may observe that $f(\delta) \leq \max(\frac{3}{4} \delta, \frac{1-\alpha}{1-\alpha^2} \delta, \delta - \kappa/4)$ hence $f^{\circ k}(\delta)$ is decreasing and tends to $0$ as $k \ti$.
\end{proof}

\subsection{A LDP lower bound}
We use Lemma \ref{lem:generermicro} and the screening result of Lemma \ref{lem:screening} to prove a first LDP lower bound.
\begin{prop} \label{prop:LDPLB1}
Let $0 < \delta \leq 1/2$ and $z_0 \in \mathring{\Sigma}$ be fixed. Assume that a good control holds at scale $\delta$ and let us fix $\deltap, \deltapp, \deltatp$ as in \eqref{choixdelta}. For any $P \in \probas_{s, \meq(z_0)}(\config)$ we have
\begin{multline} \label{LDPLB}
\log \PNbetaxb \left(B(P,\epsilon)\right) \geq - N^{2\deltap} \fbeta^{\meq(z_0)}(P) - \log \KNbeta \\ + \max_{R_1, R_2, \Nou} \log  {N \choose \Nou} \left(\int e^{-\beta (\Fou(\Xou) + N \Zeta(\Xou))} d\Xou\right)  + o(N^{2\deltap}).
\end{multline}
\end{prop}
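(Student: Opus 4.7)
The plan is to derive a converse to the upper bound \eqref{LDPUB3} by restricting the Gibbs integral to a \emph{favorable} subset of configurations $\XN$ satisfying $\iNxb(\XN) \in B(P,\epsilon)$ on which the Gibbs weight admits a matching lower bound. Concretely, I would build $\XN$ as the disjoint union of three parts: an inner microstate $\Cint$ in $C(z'_0,R_1)$ realizing a prescribed empirical field close to $P$ (produced by Lemma \ref{lem:generermicro}), an outer configuration $\Xou$ in $C(z'_0,R_2)^c$ admitting a good exterior boundary at $\partial C(z'_0,R_2)$, and a thin transition configuration $\Ctran$ in the annulus $C(z'_0,R_2) \setminus C(z'_0,R_1)$, obtained from the screening Lemma \ref{lem:screening}, whose role is to absorb the electrostatic mismatch between the inner and outer fields.

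First I would fix $R_1$ so that $\Nint := \int_{C(z'_0,R_1)} d\mupeq$ is an integer (a routine adjustment as in the proof of Lemma \ref{lem:intermedUB}) and invoke Lemma \ref{lem:generermicro} to produce the family $\AintN$ of inner configurations $\Cint$ together with screened fields $\Eint$ satisfying $\Eint \cdot \vec{n} = 0$ on $\partial C(z'_0,R_1)$ and the bounds \eqref{bonnenergie1}, \eqref{bonvolume1}. I would then restrict the outer integration to the $\deltap$-overwhelming event that $\Xou$ admits a good exterior boundary $\partial C(z'_0,R_2)$ (Lemma \ref{lem:intermedUB}), and for each such $\Xou$ apply Lemma \ref{lem:screening} to the minimizing outer field $E^{\rm{out}}$ realizing $\Fou(\Xou)$. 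The screening hypothesis \eqref{scrineg} holds because \eqref{bonbord1} gives $M \preceq N^{2\delta - \deltapp}(\log N)^2$ while $2\delta - \deltapp < 3\deltatp$ by Lemma \ref{lem:choixdelta}, and the H\"older bound \eqref{assum:muHolder} on $\mupeq$ follows from \eqref{assum-Holder} (with constant $\Chol \preceq N^{-\kappa/2}$). This yields a family $\AtranN$ of transition configurations $\Ctran$ and compatible fields $\Etran$ with $\Etran \cdot \vec{n} = 0$ on $\partial C(z'_0,R_1)$ and $\Etran \cdot \vec{n} = E^{\rm{out}}_{\eta_1} \cdot \vec{n}$ on $\partial C(z'_0,R_2)$.

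For any triple $(\Cint, \Ctran, \Xou)$ I would set $\XN := \Cint \cup \Ctran \cup \Xou$ and glue $\Eint$, $\Etran$, $E^{\rm{out}}$ into a globally decaying field $\tilde{E}$ compatible with $(\XN,\mupeq)$, the matching normal components along $\partial C(z'_0,R_1)$ and $\partial C(z'_0,R_2)$ making the gluing admissible. Lemma \ref{lem:minilocale} then gives
\[
\wN(\nu'_N) \leq N^{2\deltap}\, \bW_{\meq(z_0)}(P) + \Fou(\Xou) + o(N^{2\deltap}),
\]
where the inner part uses \eqref{bonnenergie1}, the outer one is by definition of $\Fou$, and the transition part is $o(N^{2\deltap})$ by \eqref{controleenergieEext} combined with the inequalities $\deltap + 3\deltatp + \kappa(\deltatp - 1/2) < 2\deltap$, $2\delta - \deltapp + \deltatp < 2\deltap$ and $\deltap + \deltatp < 2\deltap$ of Lemma \ref{lem:choixdelta}. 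Since $z_0 \in \mathring{\Sigma}$ and $R_2 \ll N^{1/2}$, the inner and transition particles lie in $\Sigma'$ where $\zeta$ vanishes, so $N\Zeta(\nu_N) = N\Zeta(\Xou)$.

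Combining this energy upper bound with the volume lower bounds \eqref{bonvolume1} and \eqref{AextNvol} (the latter being $o(N^{2\deltap})$ on the logarithmic scale by the choice of deltas), and incorporating the labeling factor ${N \choose \Nou}$, I would get
\[
\PNbetaxb(B(P,\epsilon)) \geq \frac{1}{\KNbeta} e^{-N^{2\deltap} \fbeta^{\meq(z_0)}(P) + o(N^{2\deltap})} {N \choose \Nou} \int e^{-\hal\beta(\Fou(\Xou) + 2N\Zeta(\Xou))} d\Xou,
\]
from which \eqref{LDPLB} follows by taking logarithms and maximizing over admissible $(R_1,R_2,\Nou)$. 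The main obstacle is the transition layer: it must simultaneously interpolate between the internally prescribed zero flux on $\partial C(z'_0,R_1)$ and the boundary flux of $E^{\rm{out}}$ on $\partial C(z'_0,R_2)$, while contributing only $o(N^{2\deltap})$ to the energy and only negligible logarithmic volume loss --- this is exactly what the delicate mesoscopic hierarchy $\deltatp < \deltapp < \deltap$ tuned in Lemma \ref{lem:choixdelta} is designed to achieve.
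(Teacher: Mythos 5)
Your proposal follows essentially the same route as the paper: the same three-piece decomposition (inner microstates from Lemma \ref{lem:generermicro}, a screened transition layer from Lemma \ref{lem:screening} applied to the minimizer realizing $\Fou$, and the outer configuration), the same gluing of fields via matching normal components on $\partial C(z'_0,R_1)$ and $\partial C(z'_0,R_2)$, the same appeal to Lemma \ref{lem:minilocale} to pass from the glued field's energy to $\wN$, and the same volume accounting via \eqref{bonvolume1} and \eqref{AextNvol}. The argument is correct and matches the paper's proof in all essential respects.
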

The maximum $\max_{R_1, R_2, \Nou}$ is taken among $\{R_1, R_2\}$ satisfying \eqref{R2bon} and \eqref{R1bon} and with $\Nou$ between $1$ and $N$.
\begin{proof}
By definition of $\PNbetaxb$ and $\PNbeta$ it is enough to prove
\begin{multline} \label{LDPLB2}
\log \int_{(\iNxb)^{-1} (B(P, \epsilon))} e^{-\beta (\wN(\nu'_N) + N\Zeta(\nu_N))} d\XN  \geq - N^{2\deltap} \fbeta^{\meq(z_0)}(P) \\ + \max_{R_1, R_2, \Nou} \log  {N \choose \Nou} \left(\int e^{-\beta (\Fou(\Xou) + N \Zeta(\Xou))} d\Xou\right) + o(N^{2\deltap}).
\end{multline}

Let $R_1, R_2, \Nou$ be fixed. Let $\Xou$ be a finite point configuration in $C(z'_0, R_2)^c$ such that $\Fou(\Xou)$ is finite. Let $E \in \Elecou(\Xou)$ be a minimizer in the definition of $\Fou$.  
We claim that there exists a set $\mathcal{A}^{\rm{tot}}$ of $N$-tuples $\XN$ such that $\XN = \Xou$ on $C(z'_0, R_2)^c$, that the energy is controlled uniformly on $\Atot$ as follows
\[
\wN(\nu'_N)  \leq - (\W_{\meq(z_0)}(P) + \Fou(\Xou)) + o(N^{2\deltap}),
\]
and that the volume of $\Atot$ is almost optimal
\[
\log \Leb^{\otimes N}(\Atot) \geq - \ERS[P|\Poisson^{\meq(z_0)}] + \log {N \choose \Nou } + o(N^{2\deltap}).
\]
\textbf{Step 1.} \textit{Screening $E$.} We may apply Lemma \ref{lem:screening} to the point configuration $\Xou$ and the electric field $E$, with $\mu = \mupeq$. Let us check that the assumptions of Lemma \ref{lem:screening} are satisfied:
\begin{enumerate}
\item The first condition on $R_1, R_2$ is satisfied by assumption (see \eqref{R2bon} and \eqref{R1bon}).
\item Since $\Fou(\Xou)$ is finite, \eqref{nointersection} holds i.e. the smeared out charges at scale $\eta_1$ do not intersect $\partial C(z'_0,R_2)$.
\item The third and fourth condition on $\Nint, \Nmid$ follow from the fact that $R_1$ is a good interior boundary.
\end{enumerate}
From Assumption \ref{assumption:V} we know that $\mupeq \preceq 1$, and since we are blowing-up the configuration around $z_0  \in \mathring{\Sigma}$ the density $\mpeq$ is bounded below on $C(z'_0, N^{\delta})$ by some $\um > 0$ depending on $z_0$. We deduce from \eqref{assum-Holder} that $|\mupeq(x) - \mupeq(y)| \preceq N^{-\kappa/2} |x-y|^{\kappa}$, hence we may chose $\Chol = N^{-\kappa/2}$ in \eqref{assum:muHolder}.

By definition of a good exterior boundary (see \eqref{bonbord1}) we have 
$\int_{\partial C(z'_0, R_2)} |E_{\eta_1}|^2 \preceq N^{2\delta-\deltapp}\log^2N$, thus \eqref{scrineg} is satisfied (for $N$ large enough) as long as $2\delta < \deltapp + 3\deltatp$ (which is ensured by the choice \eqref{choixdelta}).

We obtain a family $\AtranN$ of point configurations such that the conclusions of Lemma \ref{lem:screening} hold.
\\

\textbf{Step 2.} \textit{Generating microstates.}
Now we apply Lemma \ref{lem:generermicro} with $R_1$ as above and obtain a family $\AintN$ of point configurations with $\Nint$ points in $C(z'_0, R_1)$ together with screened electric fields $\Eint$ such that \eqref{presdeP}, \eqref{bonnenergie1}, \eqref{bonvolume1} are satisfied.
\\

\textbf{Step 3.} \textit{Gluing pieces together and bounding the energy.}
For any $\Ctran \in \AtranN$ and $\Cint \in \AintN$ we form the configuration 
\[
\Ctot := \Ctran + \Cint + \Xou.
\]
It is easy to check that $\Ctot$ always has $N$ points. Indeed we know that
\begin{itemize}
\item By Lemma \ref{lem:generermicro}, $\Cint$ always has $\int_{C(z'_0, R_1)} d\mupeq$ points. 
\item By construction, $\Ctran$ has $\Ntran = \int_{\partial C(z'_0, R_2)} E_{\eta_1} \cdot \vec{n} - \int_{C(z'_0, R_2) \backslash C(z'_0, R_1)} d\mupeq$ points.
\item By integrating the compatibility relation of $E$ and $\Xou$, we get \[\Nou = N- \int_{\partial C(z'_0, R_2)} E_{\eta_1} \cdot \vec{n} + \int_{C(z'_0, R_2)} d\mupeq.\]
\end{itemize}

If $\Etran$ and $\Eint$ are the electric fields associated to $\Ctran$ and $\Cint$ we also define 
\[
\Etot := \Etran + \Eint + E \mathbf{1}_{C(z'_0, R_2)^c}
\]
 By construction the normal derivatives of $\Etran$ and $\Eint$ coincide on $\partial C(z'_0,R_1)$ (they both vanish), and the normal derivatives of $\Etran_{\eta}$ and $E_{\eta}$ coincide on $\partial C(z'_0,R_2)$ for any $\eta \leq \eta_{1}$ (they coincide for $\eta_1$ by construction, but since there are no points at distance $\leq \eta_{1}$ of $\partial C(z'_0,R_2)$ the value of the fields $E_{\eta}$ and $\Etran_{\eta}$ on $\partial C(z'_0,R_2)$ do not depend on $\eta$ for $\eta \leq \eta_1$). Thus $\Etot$ satisfies
\begin{enumerate}
\item $- \div \Etot = \cds( \C^{\mathrm{tot}} - \mupeq) \text{ in } \R^2$
\item $\Etot$ coincides with $E$ on $C(z'_0,R_2)^c$. In particular $\Etot$ belongs to $\Eleco$, as $E$ does.
\item The energy of $\Etot$ is bounded as follows
\begin{equation} \label{controle-Etot}
\lim_{\eta \t0} \left( \int_{\R^2} |\Etot_{\eta}|^2 + N \log \eta \right) \leq \Fou(\Xou) + N^{2\deltap} \bW_{\meq(z_0)}(P) + o(N^{2\deltap}) \text{ as } N \ti.
\end{equation}
\end{enumerate}
To show \eqref{controle-Etot}, let us split the energy of $\Etot$ as 
\begin{multline} \label{decompEtot}
 \int_{\R^2} |\Etot_{\eta}|^2 + N \log \eta = \left(\int_{C(z'_0,R_2)^c} |E_{\eta}|^2 + \Nou \log \eta\right) \\ + \left(\int_{C(z'_0,R_2) \backslash C(z'_0,R_1)} |\Etran_{\eta}|^2+ \Ntran \log \eta \right)  + \left(\int_{C(z'_0,R_1)} |\Eint_{\eta}|^2 + \Nint \log \eta\right)
\end{multline}
By definition of $\Fou(\Xou)$ and by the choice of $E$ we have
\begin{equation} \label{energieext}
\frac{1}{2\pi} \lim_{\eta \t0} \left(\int_{C(z'_0,R_2)^c} |E_{\eta}|^2 + \Nou \log \eta\right) = \Fou(\Xou).
\end{equation}
In view of \eqref{bonnenergie1} we have
\begin{equation} \label{energieint}
\frac{1}{2\pi} \lim_{\eta \t0} \left(\int_{C(z'_0,R_1)} |\Eint_{\eta}|^2 + \Nint \log \eta \right) \leq N^{2\deltap} \bW_{\meq(z_0)}(P) + o(N^{2\deltap}).
\end{equation} 
Finally, the conclusions of Lemma \ref{lem:screening} combined with the control \eqref{bonbord1} and the fact that $\Chol = N^{-\kappa/2}$ ensure that
\begin{multline*}
\lim_{\eta \t0} \left(\int_{C(z'_0,R_2) \backslash C(z'_0,R_1)} |\Etran_{\eta}|^2+ \Ntran \log \eta \right)  \preceq  N^{\deltatp} N^{2\delta - \deltapp}(\log N)^2 + N^{\deltap + 3\deltatp} N^{\kappa (\deltatp-1/2)} \\ + N^{\deltap + \deltatp} \log N.
\end{multline*}
The choice of $\deltap, \deltapp, \deltatp$ as in \eqref{choixdelta} yields
\begin{equation} \label{energietran}
\lim_{\eta \t0} \left(\int_{C(z'_0,R_2) \backslash C(z'_0,R_1)} |\Etran_{\eta}|^2+ \Ntran \log \eta \right) \ll N^{2\deltap}.
\end{equation}
Inserting \eqref{energieext}, \eqref{energieint} and \eqref{energietran} into \eqref{decompEtot} yields \eqref{controle-Etot}.

Now, using the minimality of local energy as stated in Lemma \ref{lem:minilocale} and the formula \eqref{REfinite} we conclude that
\begin{multline} \label{conseqwNctot}
\wN(\Ctot) \leq  \frac{1}{2\pi} \lim_{\eta \t0} \left( \int_{\R^2} |\Etot_{\eta}|^2 + N \log \eta\right) \leq \Fou(\Xou) + N^{2\deltap} \bW_{\meq(z_0)}(P) \\ + o(N^{2\deltap}) \text{ as } N \ti.
\end{multline}

\textbf{Step 4.} \textit{Volume considerations.}
For any $\Xou$ we let $\Atot(\Xou)$ be the set of point configurations $\Ctot$ obtained as above. Now, let $\cA$ be a measurable set of finite point configurations $\Xou$ with $\Nou$ points in $C(z'_0,R_2)^c$ such that $\Fou(\Xou)$ is finite for all $\Xou \in \cA$. We let $\cAtot$ be 
\[
\cAtot := \bigcup_{\Xou \in \cA} \Atot(\Xou).
\]
Using the volume estimate \eqref{AextNvol} we obtain, with the choice \eqref{choixdelta}
\begin{equation*} 
\log \Leb^{\otimes N}(\cAtot) \geq \log \Leb^{\otimes \Nou}(\cA) + \log \Leb^{\otimes \Nint}(\AintN) + o(N^{2\deltap}) + \log { N \choose \Nou \Ntran \Nint},
\end{equation*}
(where the last term denotes a multinomial coefficient).
Using \eqref{bonvolume1} and a straightforward combinatorial inequality yields
\begin{equation} \label{volumeAtot}
\log \Leb^{\otimes N}(\cAtot) \geq \log \Leb^{\otimes \Nou}(\cA) - N^{2\deltap}\Ent[P|\Poisson^{\meq(z_0)}] + o(N^{2\deltap}) +\log {N \choose \Nou}.
\end{equation}
This proves the claim made before Step 1.
\\
\textbf{Step 5.} \textit{Conclusion.} Combining \eqref{conseqwNctot} and \eqref{volumeAtot}, we obtain that
\begin{multline*}
\log \int_{(\iNxb)^{-1} (B(P, \epsilon))} e^{-\hal \beta (\wN(\nu'_N) + N\Zeta(\nu_N))} d\XN  \\ \geq \log {N \choose \Nou} \left(\int e^{-\hal \beta (\Fou(\Xou) + N \Zeta(\Xou))} d\Xou\right) - N^{2\deltap}(\Ent[P|\Poisson^{\meq(z_0)}]  + \hal \beta \bW_{\meq(z_0)}(P)) \\ + o(N^{2\deltap}),
\end{multline*}
for any choice of $R_1, R_2$ and $\Nou$ as in the definitions \ref{def:extboundary} and \ref{def:intboundary}. It yields \eqref{LDPLB2}.
\end{proof}

\section{Conclusion} \label{sec:conclusion}
\subsection{Proof of Theorem \ref{theo:main}}
\subsubsection{Good control at macroscopic scale}
\begin{lem}
\label{lem:GC12} Good control holds at scale $\delta = \frac{1}{2}$.
\end{lem}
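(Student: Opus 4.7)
}
Fix $z_0 \in \mathring{\Sigma}$ and $0 < \deltap < 1/2$. We must verify both conditions of Definition \ref{def:goodcontrol} at scale $\delta=1/2$, i.e. the number of blown-up points in $C(z'_0, N^{1/2})$ is $\preceq_{\deltap} N$, and for any $0 < \eta < 1$ the truncated local energy on that square is $\preceq_{\deltap} N$ (in the sense used throughout the paper, namely up to a factor $1 + |\log\eta|$ that is harmless for the fixed or polynomially small values of $\eta$ that will be plugged in later). The first condition is trivial: $\mathcal{N}^{z_0}_{1/2} \leq N$ deterministically. All the content of the lemma is in the energy estimate, which the plan reduces to a global estimate on $w_N$.

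\medskip

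\textbf{Step 1 (Global exponential tail for $w_N + 2N\Zeta$).} The key input is the partition function asymptotics $\log \KNbeta = -N \min \fbarbeta + o(N)$ from \cite[Cor.~1.5]{LebSer}, applied at both inverse temperatures $\beta$ and $\beta/2$. They yield a universal constant $C=C(\beta)$ with $\KNbeta[\beta/2] / \KNbeta \leq e^{CN}$. A standard Markov--exponential argument then gives, for any $M > 0$,
\[
\PNbeta\bigl(\wN(\nu'_N) + 2N\Zeta(\nu_N) > MN\bigr) \leq e^{-\beta M N/4}\, \frac{K_{N,\beta/2}}{\KNbeta} \leq e^{(C - \beta M/4) N}.
\]
Choosing $M$ large enough, this is bounded by $e^{-cN}$ for some $c > 0$, which is $\deltap$-overwhelmingly small for every $\deltap < 1/2$ (since $cN \gg N^{2\deltap}$). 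As $\zeta \geq 0$ everywhere and hence $\Zeta(\nu_N) \geq 0$, we conclude that $\wN(\nu'_N) \preceq_{\deltap} N$.

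\medskip

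\textbf{Step 2 (From global $w_N$ to global truncated energy).} Apply Lemma \ref{lem:monoton1} to the electric system $(\nu'_N, \mupeq)$, letting the smaller truncation parameter tend to $0$ and using Lemma \ref{lem:REfinite}. This gives, for every $\eta \in (0,1)$,
\begin{equation*}
\int_{\R^2} |\Eloc_\eta|^2 + N \log \eta \leq 2\pi\, \wN(\nu'_N) + C N \eta \preceq_{\deltap} N.
\end{equation*}

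\medskip

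\textbf{Step 3 (Localization to $K := C(z'_0, N^{1/2})$).} Decompose
\[
\int_K |\Eloc_\eta|^2 + \mathcal{N}^{z_0}_{1/2} \log\eta = \Bigl(\int_{\R^2} |\Eloc_\eta|^2 + N\log\eta\Bigr) - \Bigl(\int_{K^c} |\Eloc_\eta|^2 + (N - \mathcal{N}^{z_0}_{1/2})\log\eta\Bigr).
\]
The first bracket is $\preceq_{\deltap} N$ by Step 2. Since $|\Eloc_\eta|^2 \geq 0$ and $(N - \mathcal{N}^{z_0}_{1/2}) \geq 0$ while $\log\eta \leq 0$, the second bracket is $\geq (N - \mathcal{N}^{z_0}_{1/2}) \log \eta$. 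Hence the left-hand side is bounded above by $C N + (N - \mathcal{N}^{z_0}_{1/2})|\log\eta|$, which is the form of good control \eqref{goodcont1} as it is used in the rest of the paper (compare with the application in the proof of Lemma \ref{lem:intermedUB}, where the bound is invoked in the form $\preceq_{\deltap} N^{2\delta}(1+|\log\eta|)$).

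\medskip

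\textbf{Main obstacle.} The only non-routine step is Step 1: converting the first-order partition function asymptotics into a true exponential tail bound on $\wN + 2N\Zeta$. Once this is in hand, Steps 2 and 3 are a direct consequence of the monotonicity estimate of Lemma \ref{lem:monoton1} and the non-negativity of $|\Eloc_\eta|^2$.
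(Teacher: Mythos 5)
Your proposal is correct and follows essentially the same route as the paper: a global tail bound $\PNbeta(\wN \succeq MN) \leq e^{-cN}$ obtained from the $e^{O(N)}$ partition-function asymptotics (the paper bounds the integrand directly on the event $\{\wN \geq NM\}$ rather than via your Chernoff-type comparison with $K_{N,\beta/2}$, but the two are interchangeable), followed by Lemma \ref{lem:monoton1} to pass to the truncated energies and non-negativity of $|\Eloc_\eta|^2$ to restrict to $C(z'_0,N^{1/2})$. Your Step 3 is in fact slightly more careful than the paper's one-line conclusion about the residual $(N-\mathcal{N}^{z_0}_{1/2})|\log\eta|$ term, and your observation that \eqref{goodcont2} is deterministic at $\delta=1/2$ is a harmless simplification of the paper's appeal to the discrepancy estimates.
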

\begin{proof} Let $z_0 \in \mathring{\Sigma}$. 
Using \eqref{def:Gibbs2} we see that 
\[
\log \PNbeta(\wN(\XN) \geq NM) \leq - \log \KNbeta + \log \int e^{- \frac{\beta}{2} (NM + N \Zeta(\XN))} d\XN.
\]
Since $\wN$ is bounded below by $O(N)$ (see Lemma \ref{lem:REfinite}) we have $- \log \KNbeta + \log \left(\int e^{- \frac{\beta}{2} N \Zeta(\XN)}\right)  = O(N)$ (which may depend on $\beta$), hence we get
\[
\log \PNbeta\left(\wN(\XN) \geq M\right) \leq N \left(- \frac{\beta}{2} M + O(1)\right).
\]
We deduce that for $M_0$ large enough,  $\limsup_{N \ti} \frac{1}{N} \log \PNbeta(\wN(\XN) \geq M_0) < 0$,
which in particular implies that $\limsup_{N \ti} \frac{1}{N^{2\delta}} \log \PNbeta(\wN(\XN) \geq M_0)$ for any $\delta < \frac{1}{2}$, thus we have $\wN(\XN) \preceq N$ with $\delta$-overhelming probability. Using \eqref{REfinite}
and Lemma \ref{lem:monoton1} we get for any $\eta \in (0,1)$ that
\[
\lim_{\eta \t0} \int_{\R^2} |\Eloc_{\eta}|^2 + N \log \eta \preceq_{\delta} N, 
\]
which yields \eqref{goodcont1}, and we deduce \eqref{goodcont2} from the discrepancy estimates of Lemma \ref{lem:discr}.
\end{proof}

\subsubsection{Exponential tightness}
\begin{lem}
For any $0 < \delta  \leq 1/2$, if good control holds at scale $\delta$ then $\PNbetaxb$ is exponentially tight (at speed $N^{2\deltap}$) for any $z_0 \in \mathring{\Sigma}$ and $\frac{2}{3} \delta < \deltap < \delta$.
\end{lem}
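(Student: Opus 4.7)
The plan is to exhibit a compact subset $\mc{K} \subset \probas(\config)$ such that $\PNbetaxb(\mc{K}^c)$ decays faster than any $e^{-L N^{2\deltap}}$; the same $\mc{K}$ then witnesses exponential tightness at speed $N^{2\deltap}$ for every $L > 0$. The main input is that good control at scale $\delta$, combined with $\deltap > \tfrac{2}{3}\delta$, gives $\mathcal{N}^{z_0}_{\deltap} \leq C(z_0) N^{2\deltap}$ on a $\deltap$-overwhelming event $\cA$, via Lemma~\ref{lem:discr3} applied at $R \sim N^{\deltap}$.

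A uniform intensity bound on $\iNxb(\XN)$ follows immediately. Indeed, $\iNxb(\XN)$ averages the translates $\theta_{z'}\cdot(\nu'_N \cap C(z'_0, N^{\deltap}))$ over $z' \in C(z'_0, N^{\deltap})$, so for any open bounded $U \subset \R^2$ Fubini gives
\[
\Esp_{\iNxb(\XN)}\bigl[\#(\cdot \cap U)\bigr] \;=\; N^{-2\deltap} \sum_{p \in \nu'_N \cap C(z'_0, N^{\deltap})} \bigl|C(z'_0, N^{\deltap}) \cap (p - U)\bigr| \;\leq\; \frac{|U|\, \mathcal{N}^{z_0}_{\deltap}}{N^{2\deltap}}.
\]
On $\cA$ this bounds $\Esp_{\iNxb(\XN)}[\#(\cdot \cap U)]$ by $C(z_0) |U|$, uniformly in $U$ and $\XN$.

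For each integer $n \geq 1$, let $U_R$ be the open square of sidelength $R+1$ centered at the origin, set $t_R^{(n)} := 2^R (R+1)^2 n\, C(z_0)$, and define
\[
K_n \;:=\; \bigl\{ \C \in \config : \C(U_R) \leq t_R^{(n)} \text{ for every integer } R \geq 1 \bigr\}.
\]
Since $U_R$ is open, $\C \mapsto \C(U_R)$ is vaguely lower semi-continuous, hence $K_n$ is vaguely closed; together with the uniform point-count bound on every bounded set, this makes $K_n$ compact in $\config$. Summing Markov's inequality over $R$ gives $\iNxb(\XN)(K_n^c) \leq 1/n$ on $\cA$. Setting
\[
\mc{K} \;:=\; \bigcap_{n \geq 1} \bigl\{ P \in \probas(\config) : P(K_n) \geq 1 - 1/n \bigr\},
\]
each factor is closed (by upper semi-continuity of $P \mapsto P(K_n)$ for closed $K_n$) and the family is uniformly tight by construction, so $\mc{K}$ is compact by Prokhorov. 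On $\cA$ one has $\iNxb(\XN) \in \mc{K}$.

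Since $\cA$ is $\deltap$-overwhelming, $\PNbetaxb(\mc{K}^c) \leq \PNbeta(\cA^c)$ decays faster than every $e^{-L N^{2\deltap}}$, which is the required exponential tightness. No real obstacle is foreseen; the only mild technicality is using open enlargements $U_R$ of $C_R$ to keep the relevant count function lower semi-continuous under vague limits, so that the candidate compact subset is actually closed.
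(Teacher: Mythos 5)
Your proof is correct and follows essentially the same route as the paper: good control at scale $\delta$ together with Lemma~\ref{lem:discr3} (using $\deltap > \tfrac{2}{3}\delta$) gives the point-count bound $\mathcal{N}^{z_0}_{\deltap} \preceq N^{2\deltap}$ with $\deltap$-overwhelming probability, which forces the empirical field into a fixed compact subset of $\probas(\config)$ and hence yields exponential tightness at speed $N^{2\deltap}$. The only difference is cosmetic: the paper asserts compactness of the set $\{P : \Esp_P[\Nn(0,R)] \leq C R^2 \ \forall R\}$ directly, whereas you make the same compactness explicit via Markov's inequality and Prokhorov's theorem.
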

\begin{proof}
Let $z_0 \in \mathring{\Sigma}$ and $\frac{2}{3} \delta < \deltap < \delta$ be fixed. The good control at scale $\delta$, combined with Lemma \ref{lem:discr3}, implies that there exists $C > 0$ such that the number of points in $C(z'_0, N^{\deltap})$ is bounded above by $C N^{2\deltap}$ with $\deltap$-overhelming probability. It implies that $\PNbetaxb$ is concentrated on the compact subset 
\[
\left\lbrace P \in \probas(\config), \Esp_{P} [\Nn(0,R)] \leq C R^2\quad  \forall R  > 0 \right\rbrace,
\]
with $\deltap$-overhelming probability, which ensures exponential tightness at speed $N^{2\deltap}$. 
\end{proof}
\subsubsection{Proof of the theorem}
\begin{proof}[Proof of Theorem \ref{theo:main}]
\textbf{Step 1.} \textit{Good control $\then$ LDP.}

In this first step we claim that if a good control holds at scale $\delta$, then the LDP of Theorem \ref{theo:main} holds for $\deltap$ as in \eqref{choixdelta}.

Indeed, comparing the right-hand side of \eqref{LDPLB} with the definition \eqref{def:KNbetax} of $\KNbetax$ we see that $\lim_{N \ti} N^{-2\deltap}  \log \KNbetax$
exists and that the following weak large deviation principle holds
\begin{equation} \label{weakLDP1}
\lim_{\epsilon \t0} \lim_{N \ti} N^{-2\deltap} \log \PNbetaxb \left(B(P,\epsilon)\right) = - \fbetax(P) - \lim_{N \ti} N^{-2\deltap}  \log \KNbetax,
\end{equation}
for any $P \in \probas_{s,\meq(z_0)}(\config)$, hence since $\bW_{\meq(z_0)}(P) = + \infty$ as soon as $P$ is not of intensity $\meq(z_0)$, we may write \eqref{weakLDP1} for any $P \in \probas_{s}(\config)$. By exponential tightness we obtain a full large deviation inequality: for any measurable  $A \subset \probas_{s}(\config)$ it holds
\begin{multline} \label{FLDP1}
- \inf_{P \in \mathring{A}} \fbeta^{\meq(z_0)}(P) - \lim_{N \ti} N^{-2\deltap} \log \KNbetax \\
\leq \liminf_{N \ti}  N^{-2\deltap} \log \PNbetaxb(A) \leq 
\limsup_{N \ti} N^{-2\deltap} \log \PNbetaxb(A) \\ \leq - \inf_{P \in \bar{A}} \fbetax(P) - \lim_{N \ti} N^{-2\deltap}  \log \KNbetax.
\end{multline}
In particular, taking $A = \probas_s(\config)$ we obtain
\begin{equation} \label{asymKNbetax}
\lim_{N \ti} N^{-2\deltap} \log \KNbetax = \inf \fbetax(P),
\end{equation}
and inserting \eqref{asymKNbetax} into \eqref{FLDP1} yields the LDP for $\{\PNbetaxb\}_{N}$ as stated in Theorem \ref{theo:main}.

\textbf{Step 2.} \textit{Good control $\then$ good control.}

We now claim that if a good control holds at scale $\delta$, then it holds at scale $\deltap$ with $\deltap$ as in \eqref{choixdelta}.
Combining the “good control upper bound” of Lemma \ref{lem:GCUB} and the lower bound estimates which yield \eqref{asymKNbetax} we deduce that
\begin{equation} \label{GCUB2}
\log \PNbetaxb \left( \mc{E}_{M} \right) \preceq  -MN^{2\deltap} + O(N^{2\deltap}),
\end{equation}
where $\mc{E}_M$ is as in \eqref{def:mcEM}. 
In particular it implies that $\int_{C(z'_0, N^{\deltap})} |\Eloc_{\eta_0}|^2 + \Nxa \log \eta_0 \preceq_{\delta'} N^{2\deltap}$ for any $\delta' < \deltap$ and any $\eta_0 \in (0,1)$. We also have $\Nxa \preceq_{\deltap} N^{2\deltap}$ (since it was proven in Lemma \ref{lem:intermedUB} that \eqref{ninmnxa} holds with $\deltap$-overhelming probability) hence in particular $\Nxa \preceq_{\delta'} N^{2\deltap}$ for $\delta' < \deltap$.

\textbf{Step 3.} \textit{Conclusion.}

Combining both steps with the initialization of Lemma \ref{lem:GC12} and the conclusions of Lemma \ref{lem:choixdelta} yields the proof of Theorem \ref{theo:main}.
\end{proof}
\subsection{Proof of Corollary \ref{coro:discr}}
\begin{proof}
We simply combine the fact that a good control holds at any scale $0 < \delta \leq 1/2$ (which follows from Theorem \ref{theo:main}) with Lemma \ref{lem:discr2}.
\end{proof}

\subsection{Proof of Corollary \ref{coro:locallaw}}
\begin{proof}
We may split $C(z'_0, N^{\delta})$ into a family $\{C_i\}_{i \in I}$ of squares of sidelength $\approx N^{\deltap}$, with $\# I \approx N^{2(\delta-\deltap)}$. For any $i \in I$ we have, letting $z_i$ be the center of $C_i$ and $\D_i$ the discrepancy in $C_i$
\[
\int_{C_i} f (d\nu'_N - d\mupeq) = \D_i f(z_i) + \int_{C_i} (f(z) - f(z_i)) (d\nu'_N - d\mupeq).
\]
Since good control holds at scale $\delta$ we have $|\D_i| \preceq_{\deltap} N^{4\delta/3}$ (from the discrepancy estimates of Lemma \ref{lem:discr2}) and $\int_ {C(z'_0, N^{\delta})} d\nu'_N \preceq _{\deltap} N^{2\delta}$. On the other hand $\int_{C(z'_0, N^{\delta})} d\mupeq \preceq N^{2\delta}$ (because $\mupeq$ is bounded above). Moreover the mean value theorem yields $|f(z) - f(z_i)| \leq N^{\deltap} ||\nabla f||_{\infty}$. We thus have
\[
\left|\sum_{i \in I} \int_{C_i} f (d\nu'_N - d\mupeq) \right| \preceq N^{2(\delta-\deltap)} N^{4\delta/3} ||f||_{\infty} + N^{2\delta} N^{\deltap} ||\nabla f||_{\infty},
\]
hence we see that
\[
N^{-2\delta} \left| \int_{C(z'_0, N^{\delta})} f (d\nu'_n - d\mupeq) \right| \preceq_{\deltap} ||\nabla f||_{\infty} N^{\deltap} + ||f||_{\infty} N^{-2\delta/3},
\]
which concludes the proof since $\deltap < \delta$.
\end{proof}

\section{Additional proofs} \label{sec:annexe}
\subsection{Proof of Lemma \ref{lem:monoton2}} \label{sec:preuvemonot}
\begin{proof}
We may decompose $\Eloc$ as $\Ein + \Eou$ where $\Ein$ is the local electric field generated by the electric system \textit{inside} $C_{R_2}$ and $\Eou$ is the local electric field generated by the electric system \textit{outside} $C_{R_2}$. We have
\begin{equation*}
\int_{C_{R_2}} |\Eloc_{\eta}|^2 = \int_{C_{R_2} } |\Ein_{\eta}|^2 + \int_{C_{R_2} } |\Eou_{\eta}|^2 + 2 \int_{C_{R_2}} \Ein_{\eta} \cdot \Eou_{\eta}.
 \end{equation*}
Since the charges outside $C_{R_2}$ are at distance at least $\eta_1$ from $\partial C_{R_2}$ we may replace $\Eou_{\eta}$ by $\Eou$ in the previous identity (in fact we have $\Eou_{\eta} = \Eou$ on $C_R$ for $\eta \leq \eta_1$). Integrating by parts we obtain
\begin{equation*}
 \int_{C_{R_2}} |\Eloc_{\eta}|^2 = \int_{C_{R_2}} - \Hin_{\eta} \Delta \Hin_{\eta} + \int_{\partial C_{R_2} } \Hin_{\eta} E_{\eta} \cdot \vec{n}
 + \int_{\partial C_{R_2} } \Hin_{\eta} \Eou \cdot \vec{n}
\end{equation*}
(up to additive terms which do not depend on $\eta \leq \eta_1$), where $\Hin$ is the local electric potential generated by the electric system inside $C_{R_2}$. By assumption we have $\Hin_{\eta} = \Hin_{\eta_1}$ and $\Ein_{\eta} = \Ein_{\eta_1}$ on $\partial C_{R_2}$. Finally we see that
\[
\int_{C_{R_2}} |\Eloc_{\eta}|^2 - |\Eloc_{\eta_1}|^2  = - \int_{\R^2} \left(\Hin_{\eta} \Delta \Hin_{\eta} -\Hin_{\eta_1} \Delta \Hin_{\eta_1}\right) 
 \]
for any $\eta \leq \eta_1$, and \eqref{monoton2} is obtained as Lemma \ref{lem:monoton1} (\textit{cf.} the remark after the statement of Lemma \ref{lem:monoton1}).
\end{proof}

\subsection{Proof of Lemma \ref{lem:minilocale}}
\begin{proof}
The neutrality of the system implies that the local electric potential $\Hloc$ decays like $|x|^{-1}$ as $|x|\to \infty$ in $\R^{2}$ and $\Eloc$ decreases like $|x|^{-2}$. If the right-hand side of \eqref{comparloc} is infinite then there is nothing to prove. If it is finite, given $M>1$ and letting $\chi_M$ be a smooth nonnegative function equal to $1$ in $\carr_M$ and $0$ at distance $\ge 1$ from $\carr_M$,  we may write
\begin{multline}
\int_{\R^{2}} \chi_M  |E_{\eta}|^2 = \int_{\R^{2}}\chi_M |E_{\eta} - \Eloc_{\eta}|^2 + \int_{\R^{2}}\chi_M  |\Eloc_{\eta}|^2 + 2 \int_{\R^{2}}\chi_M   (E_{\eta} - \Eloc_{\eta}) \cdot \Eloc_{\eta}
\\ \geq 
 \int_{\R^{2}}\chi_M  |\Eloc_{\eta}|^2 + 2 \int_{\R^{2}}\chi_M  (E_{\eta} - \Eloc_{\eta}) \cdot (\nabla \Hloc_{\eta}) \\ =  \int_{\R^{2}}\chi_M  |\Eloc_{\eta}|^2 + 2 \int_{\R^{2}}    \Hloc_{\eta} (E_{\eta} - \Eloc_{\eta}) \cdot \nab \chi_M,
\end{multline} where we have integrated by parts and we have used the fact that $E, \Eloc$ are compatible with the same configuration (hence $\div(E_{\eta} - \Eloc_{\eta}) = 0$). Letting $M\to \infty$, the last term tends to $0$ by finiteness of the right-hand side of \eqref{comparloc},the decay properties of $\Hloc$ and $\Eloc$ and the decay assumption on $E$.
\end{proof}

\subsection{Auxiliary estimate for screening}
\label{sec:annexe-screening}
\begin{lem} \label{lem:screenestim} Let $l > 0$ and let $H$ be a rectangle of $\R^2$ with sidelengths in $(l/2, 3l/2)$. Let $g \in L^2(\partial H)$ and let $m$ be a function on $H$ of average $m_0 := \frac{1}{|H|} \int_{H} m$ such that
\begin{equation} \label{compatibilitegm}
- \cds m_0 |H| = \int_{\partial H} g.
\end{equation}
Then there exists a solution $h$ to $- \Delta h = \cds m$ in $H$ with $\nabla h \cdot \vec{n} = 0$ on  $\partial H$ satisfying
\begin{equation} \label{correctingflux}
\int_{H} |\nabla h|^2 \preceq l \int_{\partial H} |g|^2 + l^4 ||m-m_0||^2_{L^{\infty}(H)}.
\end{equation}
\end{lem}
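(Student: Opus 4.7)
The proof is essentially standard elliptic theory on the rectangle $H$; I plan to split $h$ into two pieces, each solving a Neumann boundary value problem, and to combine standard energy and trace estimates with the compatibility relation \eqref{compatibilitegm} to recover the sharp $l$-dependence. I read the boundary condition as $\nabla h \cdot \vec n = g$ on $\partial H$, which is the only reading consistent with \eqref{compatibilitegm} (and with the way the lemma is invoked in the screening argument, where $g$ prescribes the flux $-E_{\eta_1} \cdot \vec n$ on $\partial C_{R_2} \cap \partial H_i$).

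First I would write $h = h_1 + h_2$, where $h_1$ solves $-\Delta h_1 = \cds m_0$ in $H$ with $\nabla h_1 \cdot \vec n = g$ on $\partial H$, and $h_2$ solves $-\Delta h_2 = \cds(m - m_0)$ in $H$ with $\nabla h_2 \cdot \vec n = 0$ on $\partial H$. Both Neumann problems are well-posed: for $h_1$ the solvability condition $\int_{\partial H} \nabla h_1 \cdot \vec n = -\cds m_0 |H|$ is exactly the hypothesis \eqref{compatibilitegm}, and for $h_2$ it follows from $\int_H (m - m_0) = 0$. Each solution is determined up to an additive constant, which is irrelevant for the gradient estimate.

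Next I would bound each piece. For $h_2$, the source has zero mean, so testing the equation against $h_2 - \bar h_2$ and using the Poincar\'e--Wirtinger inequality on a rectangle of sidelengths $\sim l$ (which has constant $\preceq l$) yields
\[
\int_H |\nabla h_2|^2 = \cds \int_H (m - m_0)(h_2 - \bar h_2) \preceq l \, \|m-m_0\|_{L^2(H)} \|\nabla h_2\|_{L^2(H)},
\]
hence $\int_H |\nabla h_2|^2 \preceq l^2 \|m-m_0\|_{L^2(H)}^2 \preceq l^4 \|m-m_0\|_{L^\infty(H)}^2$. For $h_1$, rescaling $H$ to a reference rectangle of sidelengths in $(1/2,3/2)$ and applying standard Neumann Laplace estimates together with the trace theorem gives $\int_H |\nabla h_1|^2 \preceq l \int_{\partial H} |g|^2 + l^4 m_0^2$. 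The residual $l^4 m_0^2$ is then absorbed using the compatibility identity and Cauchy--Schwarz: $|m_0| \leq \frac{1}{\cds |H|} |\partial H|^{1/2} \|g\|_{L^2(\partial H)} \preceq l^{-3/2} \|g\|_{L^2(\partial H)}$, so $l^4 m_0^2 \preceq l \int_{\partial H}|g|^2$.

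Summing the bounds for $h_1$ and $h_2$ yields \eqref{correctingflux}. There is no genuine obstacle here; the only technicality is a careful tracking of powers of $l$ under rescaling from the reference rectangle to $H$, which is routine.
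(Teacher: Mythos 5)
Your proof is correct and follows essentially the same route as the paper: the same decomposition $h = h_1 + h_2$ into a constant-source piece carrying the boundary flux and a mean-zero-source piece with homogeneous Neumann data, with the same energy/Poincar\'e--Wirtinger estimate for $h_2$; the only difference is that where the paper cites \cite[Lemma 5.8]{RougSer} for the bound $\int_H |\nabla h_1|^2 \preceq l \int_{\partial H}|g|^2$, you derive it directly and absorb the residual $l^4 m_0^2$ via the compatibility relation, which is a clean self-contained substitute. You also correctly identified that the boundary condition in the statement should read $\nabla h \cdot \vec n = g$ rather than $0$, consistently with \eqref{compatibilitegm} and with the way the lemma is used in the screening construction.
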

\begin{proof}
A solution exists thanks to the compatibility condition \eqref{compatibilitegm}. We may split $h$ as $h_1 + h_2$ where $h_1$ solves
\[
- \Delta h_1 = \cds m_0 \text{ in } H, \quad \nabla h_1 \cdot \vec{n} = g \text{ on } \partial H,
\]
and $h_2$ is the mean zero solution to
\[
- \Delta h_2 = \cds (m-m_0) \text{ in } H, \quad \nabla h_2 \cdot \vec{n} = 0 \text{ on } \partial H.
\]
In view of \cite[Lemma 5.8]{RougSer} we may find $h_1$ satisfying
\begin{equation} \label{estimh1}
\int_{H} |\nabla h_1|^2 \preceq l  \int_{\partial H} |g|^2.
\end{equation}
We also claim that 
\begin{equation} \label{estimh2}
\int_H |\nabla h_2|^2 \preceq l^4 ||m-m_0||^2_{L^{\infty}(H)}.
\end{equation}
Indeed it is easy to check that \eqref{estimh2} holds when $l=1$, and the general case follows by a scaling argument.

Combining \eqref{estimh1} and \eqref{estimh2} concludes the proof.
\end{proof}

\bibliographystyle{alpha}
\bibliography{loiloc}

\begin{thebibliography}{BYY14b}

\bibitem[AHM15]{ahm2}
Y.~Ameur, H.~Hedenmalm, and N.~Makarov.
\newblock Random normal matrices and {W}ard identities.
\newblock {\em Ann. Probab.}, 43(3):1157--1201, 2015.

\bibitem[AJ81]{AJ}
A.~Alastuey and B.~Jancovici.
\newblock On the classical two-dimensional one-component {C}oulomb plasma.
\newblock {\em Journal de Physique}, 42(1):1--12, 1981.

\bibitem[BEY12]{Bourgade1d1}
P.~Bourgade, L.~Erd{\H{o}}s, and H.-T. Yau.
\newblock Bulk universality of general {$\beta$}-ensembles with non-convex
  potential.
\newblock {\em J. Math. Phys.}, 53(9):095221, 19, 2012.

\bibitem[BEY14]{Bourgade1d2}
P.~Bourgade, L.~Erd{\H{o}}s, and H.-T. Yau.
\newblock Universality of general {$\beta$}-ensembles.
\newblock {\em Duke Math. J.}, 163(6):1127--1190, 2014.

\bibitem[BL15]{crystal}
X.~Blanc and M.~Lewin.
\newblock The crystallization conjecture: A review.
\newblock {\em http://arxiv.org/abs/1504.01153}, 2015.

\bibitem[BYY14a]{Bourgade2d1}
P.~Bourgade, H.-T. Yau, and J.~Yin.
\newblock Local circular law for random matrices.
\newblock {\em Probab. Theory Related Fields}, 159(3-4):545--595, 2014.

\bibitem[BYY14b]{Bourgade2d2}
P.~Bourgade, H.-T. Yau, and J.~Yin.
\newblock The local circular law {II}: the edge case.
\newblock {\em Probab. Theory Related Fields}, 159(3-4):619--660, 2014.

\bibitem[DE02]{de}
I.~Dumitriu and A.~Edelman.
\newblock Matrix models for beta ensembles.
\newblock {\em J. Math. Phys}, 43:5830--5847, 2002.

\bibitem[For10]{forrester}
P.~J. Forrester.
\newblock {\em Log-gases and random matrices}, volume~34 of {\em London
  Mathematical Society Monographs Series}.
\newblock Princeton University Press, Princeton, NJ, 2010.

\bibitem[Fro35]{frostman}
O.~Frostman.
\newblock Potentiel d'\'equilibre et capacit\'e des ensembles avec quelques
  applications \`a la th\'eorie des fonctions.
\newblock {\em Meddelanden Mat. Sem. Univ. Lund 3}, 115 s, 1935.

\bibitem[Geo93]{Georgii1}
H.-O. Georgii.
\newblock Large deviations and maximum entropy principle for interacting random
  fields on.
\newblock {\em Ann. Probab.}, 21(4):1845--1875, 1993.

\bibitem[Gin65]{ginibre}
J.~Ginibre.
\newblock Statistical ensembles of complex, quaternion, and real matrices.
\newblock {\em J. Mathematical Phys}, 6:440--449, 1965.

\bibitem[Har12]{Hardy}
A.~Hardy.
\newblock A note on large deviations for 2{D} {C}oulomb gas with weakly
  confining potential.
\newblock {\em Electron. Commun. Probab.}, 17:no. 19, 12, 2012.

\bibitem[JLM93]{JLM}
B.~Jancovici, J.~Lebowitz, and G.~Manificat.
\newblock Large charge fluctuations in classical {C}oulomb systems.
\newblock {\em J. Statist. Phys}, 72(3-4):773--7, 1993.

\bibitem[Leb15]{WBS}
T.~Lebl{\'e}.
\newblock Logarithmic, {C}oulomb and {R}iesz energy of point processes.
\newblock {\em http://arxiv.org/abs/1509.05253}, 2015.

\bibitem[LS15]{LebSer}
T.~Lebl{\'e} and S.~Serfaty.
\newblock Large {D}eviation {P}rinciple for empirical fields of {L}og and
  {R}iesz gases.
\newblock {\em http://arxiv.org/abs/1502.02970}, 2015.

\bibitem[PS15]{PetSer}
M.~Petrache and S.~Serfaty.
\newblock Next order asymptotics and renormalized energy for {R}iesz
  interactions.
\newblock {\em Journal of the Institute of Mathematics of Jussieu},
  FirstView:1--69, 5 2015.

\bibitem[RAS09]{seppalainen}
F.~Rassoul-Agha and T.~Sepp{\"a}l{\"a}inen.
\newblock {\em A course on large deviation theory with an introduction to Gibbs
  measures}, volume 162 of {\em Graduate Studies in Mathematics}.
\newblock American Mathematical Society, 2015 edition, 2009.

\bibitem[RNS15]{Rot-NodSer}
S~Rota~Nodari and S.~Serfaty.
\newblock Renormalized energy equidistribution and local charge balance in 2{D}
  {C}oulomb systems.
\newblock {\em Int. Math. Res. Not. IMRN}, (11):3035--3093, 2015.

\bibitem[RS15]{RougSer}
N.~Rougerie and S.~Serfaty.
\newblock Higher-dimensional {C}oulomb gases and renormalized energy
  functionals.
\newblock {\em Communications on Pure and Applied Mathematics}, 2015.

\bibitem[Ser15]{serfatyZur}
S.~Serfaty.
\newblock {\em Coulomb {G}ases and {G}inzburg-{L}andau Vortices}.
\newblock Zurich Lecture Notes in Mathematics, 2015.

\bibitem[SM76]{SM}
R.~Sari and D.~Merlini.
\newblock On the $\nu$-dimensional one-component classical plasma: the
  thermodynamic limit problem revisited.
\newblock {\em J. Statist. Phys.}, 14(2):91--100, 1976.

\bibitem[SS12]{gl13}
E.~Sandier and S.~Serfaty.
\newblock From the {G}inzburg-{L}andau model to vortex lattice problems.
\newblock {\em Comm. Math. Phys.}, 313:635--743, 2012.

\bibitem[SS15a]{SS1d}
Etienne Sandier and Sylvia Serfaty.
\newblock 1{D} log gases and the renormalized energy: crystallization at
  vanishing temperature.
\newblock {\em Probab. Theory Related Fields}, 162(3-4):795--846, 2015.

\bibitem[SS15b]{SS2d}
Etienne Sandier and Sylvia Serfaty.
\newblock 2{D} {C}oulomb gases and the renormalized energy.
\newblock {\em Ann. Probab.}, 43(4):2026--2083, 2015.

\bibitem[ST97]{safftotik}
E.~B. Saff and V.~Totik.
\newblock {\em Logarithmic Potentials with External Fields}.
\newblock Grundlehren der mathematischen Wissenchaften \textbf{316},
  Springer-Verlag, Berlin, 1997.

\bibitem[TV15]{TaoVu}
T.~Tao and V.~Vu.
\newblock Random matrices: universality of local spectral statistics of
  non-{H}ermitian matrices.
\newblock {\em Ann. Probab.}, 43(2):782--874, 2015.

\end{thebibliography}

\end{document}